\theoremstyle{plain} 
 \newtheorem{thm}{Theorem}[section]
 \newtheorem{lem}[thm]{Lemma}
 \newtheorem{prop}[thm]{Proposition}
 \newtheorem{claim}[thm]{Claim}
\theoremstyle{definition}
  \newtheorem{defn}[thm]{Definition}
\theoremstyle{remark}
  \newtheorem{rem}[thm]{Remark}
\newcommand{\aut}{{\rm Aut}}
\newcommand{\bs}{{\rm BS}}
\newcommand{\R}{\mathbb{R}}
\newcommand{\cal}{\mathcal}
\newcommand{\calb}{\mathcal{B}}
\newcommand{\N}{\mathbb{N}}
\newcommand{\Z}{\mathbb{Z}}
\newcommand{\calr}{\mathcal{R}}
\newcommand{\ci}[2]{\cite[#1]{#2}}
\renewcommand{\c}{\curvearrowright}
\begin{document}

\title[Stability in orbit equivalence]{Stability in orbit equivalence for Baumslag-Solitar groups and Vaes groups}
\author{Yoshikata Kida}
\address{Department of Mathematics, Kyoto University, 606-8502 Kyoto, Japan}
\email{kida@math.kyoto-u.ac.jp}
\date{Februay 18, 2013}
\subjclass[2010]{20E06, 20E08, 37A20}
\keywords{Baumslag-Solitar groups, Vaes groups, stability, measure equivalence, orbit equivalence}

\begin{abstract}
A measure-preserving action of a discrete countable group on a standard probability space is called stable if the associated equivalence relation is isomorphic to its direct product with the ergodic hyperfinite equivalence relation of type ${\rm II}_1$.
We show that any Baumslag-Solitar group has such an ergodic, free and stable action.
It follows that any Baumslag-Solitar group is measure equivalent to its direct product with any amenable group.
The same property is obtained for the inner amenable groups of Vaes.
\end{abstract}

\maketitle


\section{Introduction}\label{sec-int}

We mean by a {\it p.m.p.} action of a discrete countable group a measure-preserving action of the group on a standard Borel space equipped with a probability measure, where ``p.m.p." stands for ``probability-measure-preserving''.
A p.m.p.\ action of a discrete countable group is called {\it stable} if the associated equivalence relation is isomorphic to its direct product with the ergodic hyperfinite equivalence relation of type ${\rm II}_1$.
Due to Connes-Feldman-Weiss \cite{cfw} and Ornstein-Weiss \cite{ow}, any ergodic, free and p.m.p.\ action of any infinite amenable group gives rise to the ergodic hyperfinite equivalence relation of type ${\rm II}_1$.
It is a challenging problem to decide whether a given group has an ergodic, free, p.m.p.\ and stable action unless the group virtually has a direct summand which is infinite and amenable.

Jones-Schmidt \cite{js} characterized stability of ergodic p.m.p.\ actions in terms of asymptotically central sequences.
In \cite[Example 4.4]{js}, they also noticed that for any collection of countably infinitely many, non-trivial discrete countable groups, $\{ G_n\}_n$, and for any ergodic, free and p.m.p.\ action $G_n\c (X_n, \mu_n)$, the product action of the direct sum, $\oplus_n G_n\c \prod_n(X_n, \mu_n)$, is stable.

On the other hand, Zimmer \cite{zim-prod} obtained certain indecomposability results on equivalence relations arising from semisimple Lie groups.
Adams \cite{adams} showed that for any ergodic, free and p.m.p.\ action of a non-elementary word-hyperbolic group, the associated equivalence relation cannot be written as the direct product of two discrete measured equivalence relations of type ${\rm II}_1$.
Such indecomposability is also obtained for any equivalence relation with its cost more than $1$ or its first $\ell^2$-Betti number positive in \cite{gab-c} and \cite{gab-l}, and for the equivalence relation associated with any action of the mapping class group of a surface in \cite{kida-mcg}.
More strongly, the von Neumann algebras associated with various group actions are shown to be prime in \cite{ch}, \cite{cs}, \cite{hv}, \cite{is}, \cite{oz} and \cite{popa-gap}.

For two integers $p$, $q$ with $1\leq |p|\leq |q|$, the group with the presentation, 
\[\bs(p, q)=\langle\, a, t\mid ta^pt^{-1}=a^q\,\rangle,\]
is called the {\it Baumslag-Solitar group}.
The group $\bs(p, q)$ is amenable if and only if $|p|=1$.
If $2\leq |p|=|q|$, then $\bs(p, q)$ has a finite index subgroup isomorphic to the direct product of the infinite cyclic group $\Z$ with a non-abelian free group of finite rank.
It readily follows that there exists a free and stable action of $\bs(p, q)$ unless $2\leq |p|<|q|$.
If $2\leq |p|<|q|$, then no finite index subgroup of $\bs(p, q)$ has an infinite, amenable and normal subgroup.
This is proved through the action of $\bs(p, q)$ on the Bass-Serre tree and its boundary, as discussed in the first paragraph of \cite[Appendix B]{kida-bs}.
We could expect Adams' argument in \cite{adams} applicable to $\bs(p, q)$ because the action of a word-hyperbolic group on its compactification plays an important role in his proof.
By contrast, applying Jones-Schmidt's characterization of stable actions, we show the following:

\begin{thm}\label{thm-stable}
Let $p$ and $q$ be integers with $2\leq |p|<|q|$.
Then $\bs(p, q)$ has an ergodic, free, p.m.p.\ and stable action.
In particular, for any amenable, discrete and countable group $A$, the group $\bs(p, q)$ is measure equivalent to the direct product $A\times \bs(p, q)$.
\end{thm}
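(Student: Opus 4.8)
The plan is to invoke the Jones--Schmidt characterization of stability \cite{js}: an ergodic p.m.p.\ action $\Gamma\c(X,\mu)$ is stable if and only if its full group $[\calr_{\Gamma\c X}]$ carries a \emph{nontrivial asymptotically central sequence}, i.e.\ a sequence $(\Phi_n)$ in $[\calr_{\Gamma\c X}]$ with $\Phi_n\Psi\Phi_n^{-1}\Psi^{-1}\to\mathrm{id}$ weakly for every $\Psi\in[\calr_{\Gamma\c X}]$ while $\liminf_n\mu(\{x:\Phi_nx\neq x\})>0$. So the task is to produce an ergodic, free, p.m.p.\ action of $\bs(p,q)$ whose full group has such a sequence; the measure-equivalence clause then follows formally.

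The mechanism for asymptotic centrality is the subgroup $\Lambda:=\langle a\rangle\cong\Z$. It is neither normal nor of finite index, but it is \emph{commensurated}: from $ta^pt^{-1}=a^q$ one gets $t\Lambda t^{-1}\cap\Lambda\supseteq\langle a^q\rangle$ and $t^{-1}\Lambda t\cap\Lambda\supseteq\langle a^p\rangle$, so $\Lambda\cap g\Lambda g^{-1}$ has finite index in both factors for every $g$. Equivalently, $\bs(p,q)$ acts on the Bass--Serre tree $T$ of its HNN decomposition, with vertex set $\bs(p,q)/\Lambda$ and every vertex stabilizer conjugate to $\Lambda$; the normal core of $\Lambda$ in $\bs(p,q)$ is trivial (it would otherwise be an infinite amenable normal subgroup, which is excluded), so the action is faithful. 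I would build the desired action $\bs(p,q)\c(X,\mu)$ as an ergodic, free, p.m.p.\ action ``fibered over $T$'': for instance a twisted, co-induction-type action on $\prod_{v}(K_v,\mathrm{Haar})$, where $v$ runs over the vertices of $T$, each $K_v$ is a compact profinite model of the cyclic stabilizer of $v$, and $\bs(p,q)$ acts by permuting the factors along $T$ together with the induced identifications between them --- modified if necessary by an auxiliary free factor so as to be free. Ergodicity would be obtained by a (weak) mixing argument, using that $\bs(p,q)$ has infinite orbits on the vertex set.

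The asymptotically central sequence is then meant to come from the amenability of $\Lambda$ transported into this model: from a F{\o}lner exhaustion of $\Lambda$ (equivalently the descending chain of open subgroups of its profinite completion) one extracts $\Phi_n\in[\calr_{\bs(p,q)\c X}]$ acting ``far out in $T$'' --- $\Phi_n$ only rearranges coordinates $K_v$ with $v$ outside the ball of radius $n$ about a base vertex, and translates the remaining fibers by vanishingly small amounts. Such $\Phi_n$ do not converge to $\mathrm{id}$ (permuting two independent atomless Haar factors moves a set of full measure), while any $\Psi\in[\calr_{\bs(p,q)\c X}]$ is uniformly approximated by a $\Psi'$ whose $\bs(p,q)$-valued cocycle takes finitely many values and is supported in a fixed ball of $T$, so $\Psi'$ commutes exactly with $\Phi_n$ once $n$ is large. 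I expect the main obstacle to be the relation $ta^pt^{-1}=a^q$ with $2\leq|p|<|q|$ itself: conjugation by $t$ dilates the $\Lambda$-direction by the ratio $|q|/|p|>1$, and the tree $T$ has degree $|p|+|q|\geq5$, hence is non-amenable and has no F{\o}lner sequence, so a central sequence built crudely inside $\Lambda$ (e.g.\ powers of $a$) does not asymptotically commute with generic full-group elements. The real work is to set up the fibered model, and the supports of the $\Phi_n$, so that this dilation is absorbed and the localization argument goes through --- and it is precisely the hypothesis $2\leq|p|<|q|$ that forbids the easy route via an infinite amenable normal subgroup of a finite-index subgroup.

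Granting the theorem, the measure-equivalence statement is formal. Stability yields $\calr_{\bs(p,q)\c X}\cong\calr_{\bs(p,q)\c X}\times\calr_0$, where $\calr_0$ is the ergodic hyperfinite type ${\rm II}_1$ relation. Given an infinite amenable countable group $A$, fix any ergodic, free, p.m.p.\ action $A\c(Z,\zeta)$; by \cite{cfw} and \cite{ow}, $\calr_{A\c Z}\cong\calr_0$. Then the product action of $A\times\bs(p,q)$ on $(Z\times X,\zeta\times\mu)$ is free and p.m.p., and its equivalence relation is $\calr_{A\c Z}\times\calr_{\bs(p,q)\c X}\cong\calr_0\times\calr_{\bs(p,q)\c X}\cong\calr_{\bs(p,q)\c X}$. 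Hence this action of $A\times\bs(p,q)$ and the action $\bs(p,q)\c(X,\mu)$ are orbit equivalent, and an orbit equivalence of free p.m.p.\ actions of two countable groups induces a measure equivalence between them; for a finite group $A$ one instead just uses that $\bs(p,q)$ has finite index in $A\times\bs(p,q)$.
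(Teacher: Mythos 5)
Your overall strategy --- Jones--Schmidt's criterion, central sequences extracted from deep powers of the commensurated subgroup $\langle a\rangle$, co-induction to obtain freeness and ergodicity, and the formal derivation of measure equivalence at the end --- is the right one and agrees with the paper's. But as written the argument has two genuine gaps, the second of which you flag yourself as ``the real work'' without carrying it out.

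First, the criterion you quote is not the Jones--Schmidt criterion. In Theorem \ref{thm-js} an a.c.\ sequence $\{U_j\}$ is non-trivial if there is an asymptotically invariant sequence of Borel sets $\{B_j\}$ with $\limsup_j\mu(U_jB_j\bigtriangleup B_j)>0$; the condition $\liminf_j\mu(\{x\mid U_jx\neq x\})>0$ is weaker, and proving stability via Theorem \ref{thm-js} forces you to exhibit an explicit a.i.\ sequence that your $\Phi_n$ displace by a definite amount. Nothing in your fibered model produces one: on the co-induced factor the deep powers of $a$ converge to the identity uniformly on every fixed Borel set (this is Lemma \ref{lem-co-ind}(ii), and it is exactly why they are asymptotically central there), so they cannot separate an a.i.\ sequence living on that factor. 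The paper supplies the missing detector by adjoining a second factor $(Y,\nu)$ built from the solvable quotient $\Z[1/p,1/q]\rtimes\Z$ of $\bs(p,q)$ (Section \ref{sec-sol}), on which $a^{r(pq)^{d_j}}$ acts through a $(pq)$-adic odometer and visibly moves cylinder sets $B_j$ with $U_jB_j\cap B_j=\emptyset$ (Lemmas \ref{lem-1-ac} and \ref{lem-2-ac}). Second, commutation with $t$ is not achieved by localizing in the Bass--Serre tree: $t$ fixes no vertex, so a full-group element whose cocycle involves $t$ is not ``supported in a ball,'' and $ta^{(pq)^n}t^{-1}=a^{p^{n-1}q^{n+1}}\neq a^{(pq)^n}$, so the naive sequence of powers of $a$ already fails asymptotic centrality against the single group element $t$. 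The paper's fix is a third factor, the dyadic odometer $X=\prod_{\N}\{0,1\}$ on which $t$ acts and $a$ acts trivially, over which the full-group element is twisted: $U_j=t^{l}a^{r(pq)^{d_j})}t^{-l}$ on the cylinder of $X$ indexed by $l=l_0+2l_1+\cdots+2^jl_j$, whence $U_jt=tU_j$ off a set of measure $2^{-(j+1)}$. These two constructions are the substance of the proof, and your proposal contains neither; what you have is a correct identification of the tools and of the obstacle, not a proof. (Your final paragraph deducing measure equivalence from stability is correct and is essentially what the paper does.)

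Correction to a typo in the displayed formula above: the intended element is $U_j=t^{l}a^{r(pq)^{d_j}}t^{-l}$.
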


Any discrete countable group having an ergodic, free, p.m.p.\ and stable action is inner amenable (see \cite[Proposition 4.1]{js}).
Theorem \ref{thm-stable} therefore implies that any Baumslag-Solitar group is inner amenable.
The latter was proved by Stalder \cite{stalder}.
We should compare Theorem \ref{thm-stable} with Fima's result in \cite{fima} that the von Neumann algebra of $\bs(p, q)$ with $2\leq |p|<|q|$ is prime, is not solid and has no Cartan subalgebra.
After posting the first draft of this paper on the arXiv, we were informed by Narutaka Ozawa that the von Neumann algebra of $\bs(p, q)$ has property Gamma.
This also implies that $\bs(p, q)$ is inner amenable.

Let $\mathbb{Q}$ denote the field of rational numbers.
For a set of prime numbers, $S$, we define $\Z_S$ as the subring of $\mathbb{Q}$ generated by all $1/s$ with $s\in S$.
As an application of Theorem \ref{thm-stable}, we obtain the following:

\begin{thm}\label{thm-s-int}
Let $p$ and $q$ be integers with $2\leq |p|<|q|$.
Let $S$ be a set of prime numbers dividing neither $p$ nor $q$.
We denote by $\alpha$ the isomorphism from $p\Z_S$ onto $q\Z_S$ multiplying by $q/p$.
Then the HNN extension of $\Z_S$ relative to $\alpha$ is measure equivalent to $\bs(p, q)$.
\end{thm}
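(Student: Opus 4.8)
The plan is to realize both $\bs(p,q)$ and the group $G$ of the statement (the HNN extension of $\Z_S$ relative to $\alpha$) as fundamental groups of graphs of groups over a common underlying graph, to build a measure equivalence coupling between them by amalgamating couplings of the local pieces along the associated Bass--Serre tree, and to absorb the remaining amenable factor by invoking Theorem~\ref{thm-stable}.

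First I would record the combinatorial picture. Since every prime in $S$ divides neither $p$ nor $q$, multiplication by $p$ (respectively by $q$) is an injective endomorphism of $\Z_S$ with image $p\Z_S$ (respectively $q\Z_S$) of index $|p|$ (respectively $|q|$), with $\Z_S/p\Z_S\cong\Z/p\Z$ and $\Z_S/q\Z_S\cong\Z/q\Z$. Hence $G$ is the fundamental group of the graph of groups carried by a single loop, with vertex group $\Z_S$, edge group $\Z_S$, and the two edge monomorphisms $x\mapsto px$ and $x\mapsto qx$; the choice $S=\emptyset$ gives the same description of $\bs(p,q)$ with $\Z$ in place of $\Z_S$. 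Consequently $G$ and $\bs(p,q)$ both act simplicially, with one orbit of vertices and one orbit of edges, on the $(|p|+|q|)$-regular tree $T$, namely the Bass--Serre tree of that graph of groups; the vertex and edge stabilizers are isomorphic to $\Z_S$, respectively to $\Z$, hence are infinite amenable groups.

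Next I would construct the coupling, following the technique of amalgamating measure couplings along Bass--Serre trees (as in \cite{kida-bs}): from a measure equivalence coupling between the vertex group of $G$ and the vertex group of $\bs(p,q)$ that is suitably compatible with the two edge inclusions, one amalgamates over $T$ to obtain a coupling between $G$ and $\bs(p,q)$. Since $\Z_S$ and $\Z$ are amenable, they are measure equivalent; but a coupling coming from an arbitrary orbit equivalence of free ergodic p.m.p.\ actions need not be compatible with the self-similar structure coming from the edge maps $x\mapsto px$ and $x\mapsto qx$, which have different indices $|p|\neq|q|$ and are linked around the loop. Making the compatibility hold along both edges simultaneously is, I expect, the main obstacle, and it is here that Theorem~\ref{thm-stable} enters. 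Because $p$ and $q$ are coprime to every prime in $S$, multiplication by $p$ and by $q$ are automorphisms of the countable amenable group $\Z_S/\Z\cong\bigoplus_{s\in S}(\Z[1/s]/\Z)$, so the ``$\Z_S/\Z$-direction'' of the HNN extension ought to contribute only an amenable direct factor up to measure equivalence. I therefore expect the argument to show first that $G$ is measure equivalent to $A\times\bs(p,q)$ for an explicit countable amenable group $A$ built from $\Z_S/\Z$, and then to invoke the corollary of Theorem~\ref{thm-stable} (that $\bs(p,q)$ is measure equivalent to $A\times\bs(p,q)$ for every countable amenable $A$) to conclude that $G$ is measure equivalent to $\bs(p,q)$.
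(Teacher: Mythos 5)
You have correctly identified the shape of the argument: the paper does indeed prove the theorem by first showing that the HNN extension $\Lambda$ of $\Z_S$ is measure equivalent to $Q\times\bs(p,q)$, where $Q\cong\Z_S/\Z\cong\bigoplus_{s\in S}\Z[1/s]/\Z$ is a countable abelian (hence amenable) group on which multiplication by $p$ and $q$ act as automorphisms, and then absorbing the factor $Q$ by the corollary of Theorem~\ref{thm-stable}. Your last paragraph is essentially the paper's endgame.

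However, there is a genuine gap where you locate ``the main obstacle'': you never actually produce the measure equivalence between $\Lambda$ and $Q\times\bs(p,q)$, and the device you invoke for it will not work as stated. Theorem~\ref{thm-stable} cannot help with the compatibility of edge inclusions along the loop --- it only serves at the very end --- and ``amalgamating couplings of the local pieces along the Bass--Serre tree'' is precisely the step that is hard here, because any coupling of the vertex groups $\Z_S$ and $\Z$ would have to intertwine simultaneously the index-$|p|$ and index-$|q|$ subgroup structures wrapped around the single loop. The paper sidesteps this entirely by a lattice argument: it embeds $\Lambda$ as a lattice in the locally compact group $G=(Q\times\R)\rtimes\aut(T)$, where $\aut(T)$ is the group of orientation-preserving automorphisms of the Bass--Serre tree acting on $Q\times\R$ by powers of multiplication by $q/p$ through the translation homomorphism $\pi\colon\aut(T)\to\Z$; it then exhibits a second lattice $\Delta$ generated by $Q$ and a copy of $\Gamma=\bs(p,q)$, with the \emph{same} fundamental domain, so that $\Lambda$ and $\Delta$ are ME. Finally, $\Delta$ is a semi-direct product of the form $(Q\times N)\rtimes\Z$, and a separate lemma (Lemma~\ref{lem-sdp}) --- which uses that $Q$ is an increasing union of finite subgroups invariant under the $\Z$-action, together with a profinite completion of $\Z$ --- untwists this semi-direct product into the direct product $Q\times\Gamma$ up to ME. Both the lattice construction and this untwisting lemma are essential and absent from your proposal; without them the claimed measure equivalence $\Lambda\sim_{\mathrm{ME}}Q\times\bs(p,q)$ remains unproved.
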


In \cite[Problem 4.2]{js}, Jones-Schmidt asked whether any inner amenable group has an ergodic, free, p.m.p.\ and stable action.
Recently, Vaes \cite{vaes} discovered an inner amenable group $G$ whose von Neumann algebra $LG$ is a factor and does not have property Gamma.
This solved a longstanding problem posed by Effros \cite{effros}.
In particular, $LG$ is not isomorphic to its tensor product with the hyperfinite ${\rm II}_1$ factor.
The Vaes group $G$ could therefore be a candidate of a counterexample to the above Jones-Schmidt's question, whereas we show the following:

\begin{thm}\label{thm-v}
The Vaes group $G$ has an ergodic, free, p.m.p.\ and stable action.
In particular, for any amenable, discrete and countable group $A$, the group $G$ is measure equivalent to the direct product $A\times G$.
\end{thm}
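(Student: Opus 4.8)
The plan is to reduce Theorem~\ref{thm-v} to the machinery developed for Theorem~\ref{thm-stable}. By the Jones--Schmidt description of stable actions \cite{js}, an ergodic p.m.p.\ action is stable precisely when its orbit equivalence relation carries a non-trivial asymptotically central sequence, and the role of the structural hypothesis in Theorem~\ref{thm-stable} is to produce one: what drives the argument there is that $\bs(p,q)$ contains the subgroup $\Sigma=\langle a\rangle$, which is infinite, amenable, commensurated, and has unbounded commensuration indices ($t^{n}\langle a\rangle t^{-n}\cap\langle a\rangle=\langle a^{q^{n}}\rangle$ has index $|q|^{n}\to\infty$). I would therefore first isolate, from the proof of Theorem~\ref{thm-stable}, the underlying statement --- roughly, that a countable group admitting a commensurated, infinite, amenable subgroup $\Sigma$ whose commensuration indices $[\Sigma:g\Sigma g^{-1}\cap\Sigma]$ are unbounded has an ergodic, free, p.m.p.\ and stable action --- and then check its hypothesis for the Vaes group. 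Vaes builds $G$ as an extension $1\to N\to G\to Q\to 1$ with $N$ an infinite amenable (abelian) normal subgroup and $Q$ non-amenable acting on $N$; one should be able to take for $\Sigma$ a well-chosen finitely generated subgroup of $N$. Because $N$ is abelian, conjugation by $N$ leaves $\Sigma$ fixed, so commensuration of $\Sigma$ inside $G$ is a finite computation with the $Q$-action, and the unboundedness of the commensuration indices is furnished by suitable ``expanding'' elements of $Q$ --- the analogues of the stable letter $t$ in $\bs(p,q)$.

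Granting this, the construction proceeds as in Theorem~\ref{thm-stable}. One passes to the Schlichting completion $(\bar G,\bar\Sigma)$ of $(G,\Sigma)$, so that $\bar\Sigma$ is an infinite profinite group, $\bar G$ a totally disconnected locally compact group, and $G$ dense in $\bar G$; one then lets $G$ act ergodically and measure-preservingly on a standard probability space on which the profinite group $\bar\Sigma$ also acts compatibly (built from the completion), and multiplies by a Bernoulli action $G\c [0,1]^{G}$ to make the action essentially free, ergodicity of the product being guaranteed by mixing of the Bernoulli factor together with ergodicity of the first factor, the latter using that the $Q$-action underlying that factor has no non-trivial finite orbit --- a feature present in Vaes's group. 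The sought asymptotically central sequence is then read off from the profinite filtration: choosing a decreasing neighbourhood basis $(\bar\Sigma_{n})_{n}$ of the identity in $\bar\Sigma$ by open finite-index subgroups, one realizes the ergodic hyperfinite relation of type ${\rm II}_1$ inside $\mathcal{R}$ as the increasing union of the finite relations given by $\bar\Sigma$-translation within cosets of $\bar\Sigma_{n}$, and this sequence is asymptotically central because each fixed $g\in G$ satisfies $\bar\Sigma_{n}\subseteq g\bar\Sigma g^{-1}\cap\bar\Sigma$ for all large $n$ (this is where commensuration is used), so $g$ interacts with only a bounded part of the filtration; amenability of $\bar\Sigma$ then converts ``bounded interaction'' into ``asymptotically vanishing interaction''. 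The Jones--Schmidt criterion now yields stability, and the measure-equivalence clause follows from stability exactly as it does in Theorem~\ref{thm-stable}.

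The step I expect to be the real obstacle is the quantitative verification of asymptotic centrality: one has to show that the finite transformations building up the hyperfinite relation are moved by each group element only on a set of measure tending to $0$, which comes down to a uniform comparison of $g\bar\Sigma_{n}g^{-1}$ with $\bar\Sigma_{n}$ inside $\bar\Sigma$ and its interplay with essential freeness. The point is genuinely delicate here because $LG$ itself has no property Gamma, so the central sequence cannot be absorbed into $LG$: it must essentially use the $L^{\infty}(X)$-direction of the crossed product, i.e.\ the transformations coming from $\bar\Sigma$-translations rather than from elements of $G$. The remaining, unavoidable task is to open up Vaes's construction far enough to confirm that the chosen $\Sigma$ is indeed commensurated in $G$ with unbounded commensuration indices --- and to verify whatever further hypotheses (for instance regarding unimodularity of $\bar G$) the criterion extracted from Theorem~\ref{thm-stable} actually needs; with that done, the rest is a faithful transcription of the $\bs(p,q)$ argument.
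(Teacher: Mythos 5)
Your high-level strategy --- locate an infinite amenable subgroup $\Sigma$ that is ``asymptotically normal'' in $G$, build a p.m.p.\ action in which translations by elements of $\Sigma$ escaping to infinity form a non-trivial a.c.\ sequence, use a co-induced/Bernoulli factor for freeness and ergodicity, and invoke Jones--Schmidt --- is indeed the philosophy of the paper's proof (Theorem \ref{thm-s-v}). But as written the proposal has two genuine gaps. First, your structural description of the Vaes group is incorrect. $G$ is \emph{not} an extension $1\to N\to G\to Q\to 1$ with $N$ infinite amenable normal: the relevant amenable subgroup is $K=\oplus_{n}H_n$, which is normal in $G_0=K\rtimes \Lambda$ but stops being normal already in $G_2$, since an element of $E_1$ centralizes $K_1=\oplus_{n\geq 1}H_n$ but conjugates $H_0$ to a length-three reduced word outside $G_1$. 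Moreover $K$ is locally finite, so ``a well-chosen finitely generated subgroup of $N$'' is necessarily finite and cannot serve as your $\Sigma$; you must work with all of $K$ and with its pro-finite completion $\prod_n H_n$. The commensuration of $K$ is also not of the ``expanding'' BS type: elements of $E_N$ simply centralize the finite-index subgroup $K_N$ and push the complementary factors $H_0,\dots,H_{N-1}$ entirely out of $K$, while $\Lambda$ normalizes $K$ but acts on each $H_n$ nontrivially. This last point matters: the candidate central sequence $h_n\in H_n$ does \emph{not} asymptotically commute with $\Lambda$, so the a.c.\ sequence cannot consist of group elements; the paper must twist, setting $U_n=\lambda h_n\lambda^{-1}$ on the cylinder of $X=\varprojlim \Lambda/\Lambda_n$ over $\lambda\Lambda_n$. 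Your closing remark that the central sequence must live in the $L^\infty(X)$-direction is exactly right, but your setup (abelian normal $N$, conjugation by $N$ fixing $\Sigma$) would not lead you to this twist.

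Second, the ``criterion extracted from Theorem \ref{thm-stable}'' --- that a commensurated, infinite, amenable subgroup with unbounded commensuration indices forces a stable action --- is not proved in the paper and is not a routine extraction. The paper never passes through a Schlichting completion; for each group it hand-builds a probability space on which the profinite completion of $\Sigma$ acts by translation \emph{and} on which all of $G$ acts measure-preservingly, and this is precisely where the work is. For $\bs(p,q)$ the completion is non-unimodular (conjugation by $t$ carries $\langle a^p\rangle$ onto $\langle a^q\rangle$), which is why Section \ref{sec-sol} needs the asymmetric space $Y_-\times Y_0\times Y_+$ with alphabets of sizes $q$, $pq$, $p$ and the delicate verification that the shift preserves $\nu$. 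You flag unimodularity as ``a hypothesis to check,'' but resolving it is the substance of the proof, not a footnote; for the Vaes group it happens to be unproblematic (conjugation restricts to isomorphisms $K_N\to K_N$, so $Y=\prod_nH_n$ with Haar measure works directly), yet your proposal, modeled on the expanding element $t$, does not see this. In short: the proposal identifies the right ingredients but neither supplies the general criterion it invokes nor correctly identifies the data $(\Sigma, X, Y, Z, U_n)$ for the Vaes group, so it does not yet constitute a proof.
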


The construction of a stable action of the group $G$ is fairly similar to that for Baumslag-Solitar groups.

This paper is organized as follows.
In Section \ref{sec-pre}, we review the characterization of stable actions due to Jones-Schmidt, and introduce the notation and terminology employed throughout the paper.
Theorem \ref{thm-stable} is proved through Sections \ref{sec-sol} and \ref{sec-s}.
In Section \ref{sec-sol}, focusing on a certain solvable quotient of $\bs(p, q)$, we construct its p.m.p.\ action based on the odometer action of $\Z$.
In Section \ref{sec-s}, using this action, we obtain an ergodic, free, p.m.p.\ and stable action of $\bs(p, q)$.
As a by-product, we also find such a stable action of the normal subgroup of $\bs(p, q)$ generated by $a$.
In the case of $p=1$, although $\bs(1, q)$ is amenable, our construction of stable actions is still available, and reduces to the construction in Section \ref{sec-sol}.
This will help us to get an intuition for a general case.
In Sections \ref{sec-sdp} and \ref{sec-v}, Theorems \ref{thm-s-int} and \ref{thm-v} are proved, respectively.


\section{Preliminaries}\label{sec-pre}

Let $\N$ denote the set of non-negative integers.
Let $\Z_-$ and $\Z_+$ denote the set of negative integers and the set of positive integers, respectively.

\subsection{Jones-Schmidt's characterization of stability}

We mean by a {\it standard probability space} a standard Borel space equipped with a probability measure.
All relations among Borel sets and maps that appear in the paper are understood to hold up to sets of measure zero, unless otherwise stated.

Let $(X, \mu)$ be a standard probability space.
We denote by $\calb_X$ the $\sigma$-algebra of Borel subsets of $X$.
Let $\calr$ be an ergodic, discrete measured equivalence relation on $(X, \mu)$ of type ${\rm II}_1$.
For $x\in X$, let $\calr x$ denote the equivalence class of $\calr$ containing $x$.
We define $[\calr]$ as the {\it full group} of $\calr$, that is, the group of Borel automorphisms $U$ of $(X, \mu)$ with $Ux\in \calr x$ for a.e.\ $x\in X$.

A sequence $\{ B_j\}_{j\in \N}$ in $\calb_X$ is called an {\it asymptotically invariant (a.i.) sequence} for $\calr$ if for any $U\in [\calr]$, we have $\lim_j\mu(UB_j\bigtriangleup B_j)=0$.
An a.i.\ sequence $\{ B_j\}_{j\in \N}$ for $\calr$ is called {\it trivial} if we have $\lim_j\mu(B_j)(1-\mu(B_j))=0$.

A sequence $\{ U_j\}_{j\in \N}$ in $[\calr]$ is called an {\it asymptotically central (a.c.) sequence} for $\calr$ if the following two conditions hold:
\begin{enumerate}
\item[(a)] For any $B\in \calb_X$, we have $\lim_j\mu(U_jB\bigtriangleup B)=0$.
\item[(b)] For any $V\in [\calr]$, we have $\lim_j\mu(\{\, x\in X\mid U_jVx\neq VU_jx\,\})=0$.
\end{enumerate}
An a.c.\ sequence $\{ U_j\}_{j\in \N}$ for $[\calr]$ is called {\it trivial} if we have $\lim_j\mu(U_jB_j\bigtriangleup B_j)=0$ for any a.i.\ sequence $\{ B_j\}_{j\in \N}$ for $\calr$.

We mean by a discrete group a discrete and countable group.
Let $G$ be a discrete group, and let $G\c (X, \mu)$ be an ergodic p.m.p.\ action.
We define $\calr(G\c (X, \mu))$ as the discrete measured equivalence relation associated with the action $G\c (X, \mu)$, that is,
\[\calr(G\c (X, \mu))=\{\, (gx, x)\in X\times X\mid g\in G,\ x\in X\,\},\]
which is often denoted by $\calr(G\c X)$ if $\mu$ is understood from the context.
An a.i.\ sequence for $\calr(G\c X)$ is also called an {\it asymptotically invariant (a.i.) sequence} for the action $G\c (X, \mu)$.
Similarly, an a.c.\ sequence for $\calr(G\c X)$ is also called an {\it asymptotically central (a.c.) sequence} for the action $G\c (X, \mu)$.
We note that a sequence $\{ B_j\}_{j\in \N}$ in $\calb_X$ is a.i.\ for the action $G\c (X, \mu)$ if and only if for any $g\in G$, we have $\lim_j\mu(gB_j\bigtriangleup B_j)=0$.
We also note that a sequence $\{ U_j\}_{j\in \N}$ in $[\calr(G\c X)]$ is a.c.\ for the action $G\c (X, \mu)$ if and only if it satisfies condition (a) in the definition of an a.c.\ sequence and the following condition:
\begin{enumerate}
\item[(c)] For any $g\in G$, we have $\lim_j\mu(\{\, x\in X\mid U_jgx\neq gU_jx\,\})=0$.
\end{enumerate}
These remarks are noticed in \cite[Section 2]{js} and \cite[Remark 3.3]{js}, respectively.

\begin{lem}\label{lem-ai}
Let $\calr$ and $\cal{S}$ be ergodic, discrete measured equivalence relations of type ${\rm II}_1$, on standard probability spaces $(X, \mu)$ and $(Y, \nu)$, respectively.
Let $\pi \colon X\to Y$ be a Borel map with $\pi_*\mu=\nu$ and $\pi(\calr x)\subset \cal{S}\pi(x)$ for a.e.\ $x\in X$.
If $\{ B_j\}_{j\in \N}$ is an a.i.\ sequence for $\cal{S}$, then $\{ \pi^{-1}(B_j)\}_{j\in \N}$ is an a.i.\ sequence for $\calr$.
\end{lem}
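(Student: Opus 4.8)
The plan is to verify the definition of an a.i.\ sequence for $\calr$ head-on: I must show that $\lim_j \mu(U\pi^{-1}(B_j) \bigtriangleup \pi^{-1}(B_j)) = 0$ for every $U \in [\calr]$. The only real obstacle is that $\pi$ need not be injective, so $U$ does not descend to a Borel automorphism of $(Y, \nu)$ and the hypothesis on $\{B_j\}$ cannot be pulled back literally. I would get around this by slicing $X$ into countably many Borel pieces on each of which $\pi \circ U$ agrees with $V \circ \pi$ for some $V$ taken from a fixed countable family generating $\cal{S}$.

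In detail: first, by the Feldman-Moore theorem fix a countable subgroup $\{V_n\}_{n \in \N}$ of $[\cal{S}]$ with $\cal{S} = \bigcup_n \{(V_n y, y) : y \in Y\}$. Since $Ux \in \calr x$ and $\pi(\calr x) \subset \cal{S}\pi(x)$ for a.e.\ $x$, we get $(\pi(Ux), \pi(x)) \in \cal{S}$ for a.e.\ $x$; hence the Borel sets $X_n := \{x : \pi(Ux) = V_n\pi(x)\} \setminus \bigcup_{m<n} X_m$ partition $X$ up to a null set, and $\pi(Ux) = V_n\pi(x)$ holds on $X_n$.

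Next, for a fixed $B \in \calb_Y$ I would use that $U$ preserves $\mu$ to rewrite $\mu(U\pi^{-1}(B) \bigtriangleup \pi^{-1}(B)) = \mu(\{x : \pi(x) \in B\} \bigtriangleup \{x : \pi(Ux) \in B\})$, and note that on $X_n$ the condition $\pi(Ux) \in B$ becomes $\pi(x) \in V_n^{-1}B$. So the part of this symmetric difference lying in $X_n$ equals $X_n \cap \pi^{-1}(B \bigtriangleup V_n^{-1}B)$, whose measure is at most $\min\{\mu(X_n),\ \nu(V_n B \bigtriangleup B)\}$, using $\pi_*\mu = \nu$ and the $V_n$-invariance of $\nu$. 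Summing over $n$,
\[
\mu(U\pi^{-1}(B) \bigtriangleup \pi^{-1}(B)) \leq \sum_{n \in \N} \min\{\mu(X_n),\ \nu(V_n B \bigtriangleup B)\}.
\]
Finally I would put $B = B_j$: for each fixed $n$ one has $\nu(V_n B_j \bigtriangleup B_j) \to 0$ as $j \to \infty$ because $\{B_j\}$ is a.i.\ for $\cal{S}$ and $V_n \in [\cal{S}]$, while the $n$-th term is dominated by the summable sequence $\mu(X_n)$; the dominated convergence theorem for series then forces the right-hand side to converge to $0$, giving $\lim_j \mu(U\pi^{-1}(B_j) \bigtriangleup \pi^{-1}(B_j)) = 0$, as required.

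I expect the slicing step — producing from the weak compatibility condition $\pi(\calr x) \subset \cal{S}\pi(x)$ a usable, countably indexed family in $[\cal{S}]$ that realizes $\pi \circ U$ piecewise (the ``local lifts'' $V_n$) — to be the only point needing care; the symmetric-difference bookkeeping and the passage to the limit are then routine. (Via the remarks preceding the lemma one could instead test the a.i.\ condition only against a countable generating subgroup of $[\calr]$ in place of an arbitrary $U$, but since the above works verbatim for all $U \in [\calr]$ this shortcut is not needed.)
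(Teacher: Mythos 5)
Your proof is correct and follows essentially the same route as the paper: both invoke Feldman--Moore to realize $\cal{S}$ by a countable group, partition $X$ according to which group element locally implements $\pi\circ U$, and transfer the a.i.\ property through $\pi_*\mu=\nu$. The only difference is cosmetic --- you close the estimate with dominated convergence for series where the paper picks a finite subfamily carrying all but $\varepsilon/2$ of the mass.
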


\begin{proof}
For $j\in \N$, we set $A_j=\pi^{-1}(B_j)$.
Pick $g\in [\calr]$ and $\varepsilon >0$.
It is enough to show that $\mu(gA_j\setminus A_j)<\varepsilon$ for any sufficiently large $j\in \N$.
By \cite[Theorem 1]{fm}, we can find a discrete group $H$ and a p.m.p.\ action of $H$ on $(Y, \nu)$ such that the associated equivalence relation is equal to $\cal{S}$.
Let $\{ h_n\}_{n\in N}$ be an enumeration of all elements of $H$ with $N$ a countable set.
There exists a countable Borel partition $X=\bigsqcup_{n\in N}X_n$ such that for any $n\in N$ and a.e.\ $x\in X_n$, we have $\pi(gx)=h_n\pi(x)$.
Let $F$ be a finite subset of $N$ with $\mu\left(X\setminus \bigcup_{n\in F}X_n\right)<\varepsilon /2$.
Choose a number $J\in \N$ such that for any $n\in F$ and any $j\in \N$ with $j\geq J$, we have $\nu(h_nB_j\bigtriangleup B_j)<\varepsilon /(2|F|)$.
For any $n\in N$ and $j\in \N$, the inclusion
\begin{align*}
g(A_j\cap X_n)\setminus A_j&\subset \{\, x\in X\mid \pi(g^{-1}x)\in B_j,\ h_n\pi(g^{-1}x)=\pi(x)\not\in B_j\,\}\\
&\subset g\pi^{-1}(B_j\setminus h_n^{-1}B_j)
\end{align*}
holds.
For any $j\in \N$ with $j\geq J$, we therefore obtain the inequality
\[
\mu(gA_j\setminus A_j)\leq \mu\left(X\setminus \bigcup_{n\in F}X_n\right)+\sum_{n\in F}\mu(g(A_j\cap X_n)\setminus A_j)<\varepsilon.
\]
The lemma is proved.
\end{proof}

Let $\calr_0$ be the ergodic hyperfinite equivalence relation on a standard probability space $(X_0, \mu_0)$ of type ${\rm II}_1$.
An ergodic, discrete measured equivalence relation $\calr$ on a standard probability space $(X, \mu)$ is called {\it stable} if $\calr$ is isomorphic to the direct product $\calr \times \calr_0$ on $(X\times X_0, \mu \times \mu_0)$ defined as
\[\calr\times \calr_0=\{\, ((x, x_0), (y, y_0))\in (X\times X_0)^2\mid (x, y)\in \calr,\ (x_0, y_0)\in \calr_0\,\}.\]
An ergodic p.m.p.\ action $G\c (X, \mu)$ is called {\it stable} if $\calr(G\c X)$ is stable.
Jones-Schmidt obtained the following characterization of stability.

\begin{thm}[\ci{Theorem 3.4}{js}]\label{thm-js}
An ergodic discrete measured equivalence relation of type ${\rm II}_1$ is stable if and only if it has a non-trivial a.c.\ sequence.
\end{thm}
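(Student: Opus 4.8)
The statement is a biconditional, and I would prove the two implications by quite different methods; the forward one, stability $\Rightarrow$ existence of a non-trivial a.c.\ sequence, is a direct construction. Assume $\calr$ is stable and fix an isomorphism $\calr=\calr\times\calr_0$ on a product space $X\times X_0$. I would realise $\calr_0$ concretely as the orbit equivalence relation of the coordinatewise addition action of $\bigoplus_{k\in\N}\Z/2\Z$ on $X_0=\{0,1\}^{\N}$ with the uniform product measure, which is ergodic, hyperfinite and of type ${\rm II}_1$. Write $\calr_0^{(N)}$ for the subrelation identifying sequences that agree past the $N$-th coordinate, so that $\calr_0=\bigcup_N\calr_0^{(N)}$ is an increasing union and hence $[\calr\times\calr_0]$ is the uniform closure of $\bigcup_N[\calr\times\calr_0^{(N)}]$. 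For $k\in\N$ let $U_k\in[\calr_0]\subset[\calr\times\calr_0]$ flip the $k$-th coordinate, and set $B_k=\{(x,\xi)\in X\times X_0:\xi_k=0\}$. Approximating arbitrary Borel sets by sets depending on $x$ and only finitely many of the $\xi_i$, and approximating arbitrary elements of $[\calr\times\calr_0]$ by elements of some $[\calr\times\calr_0^{(N)}]$, one checks routinely that $\{U_k\}$ satisfies conditions (a) and (b), that $\{B_k\}$ is an a.i.\ sequence with $\mu(B_k)=1/2$, and that $\mu(U_kB_k\bigtriangleup B_k)=1$ for every $k$ because $U_kB_k=(X\times X_0)\setminus B_k$. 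Hence $\{U_k\}$ is a non-trivial a.c.\ sequence.

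For the converse, given a non-trivial a.c.\ sequence $\{U_j\}$ for $\calr$, the plan is to split off a copy of $\calr_0$ as a direct factor, i.e.\ to build an isomorphism $\calr\cong\mathcal{S}\times\calr_0$ for some equivalence relation $\mathcal{S}$ on a standard probability space; this suffices, since $\calr_0\times\calr_0\cong\calr_0$ then gives $\calr\times\calr_0\cong\mathcal{S}\times\calr_0\times\calr_0\cong\mathcal{S}\times\calr_0\cong\calr$. To produce the factor I would construct a nested sequence of finite subequivalence relations $\mathcal{F}_1\subseteq\mathcal{F}_2\subseteq\cdots\subseteq\calr$ with $2^n$-element classes whose union $\mathcal{H}=\bigcup_n\mathcal{F}_n$ is simultaneously (i) ergodic of type ${\rm II}_1$, (ii) asymptotically independent from $\calr$, and (iii) asymptotically central in $\calr$. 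At the $n$-th step one selects a tail member $U_{j(n)}$ of the a.c.\ sequence and uses the Rokhlin lemma for the ergodic relation $\calr$ to realise \emph{exactly} a ``doubling'' $\mathcal{F}_n\rightsquigarrow\mathcal{F}_{n+1}$ that $U_{j(n)}$ implements only approximately; condition (b) is invoked to make $\mathcal{F}_{n+1}$ asymptotically commute with a prescribed countable group generating $\calr$, while non-triviality of $\{U_j\}$ keeps the new levels spread out so that the a.i.\ sequence carried by $\mathcal{H}$ --- the analogue of the sets $B_k$ above, pulled back via Lemma~\ref{lem-ai} --- remains a.i.\ for all of $\calr$ and non-trivial. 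Diagonalising over these requirements yields $\mathcal{H}\cong\calr_0$, and the centrality engineered into the construction lets one ``rotate'' this copy off $\calr$, in the spirit of Connes' proof that a ${\rm II}_1$ factor with non-commutative central sequences absorbs the hyperfinite factor, upgrading the asymptotic splitting to a genuine product decomposition $\calr\cong\mathcal{S}\times\calr_0$.

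The converse is where essentially all the difficulty lies, and within it two points: arranging the inductive construction so that requirements (i)--(iii) hold along one and the same nested sequence, which demands a delicate simultaneous use of the Rokhlin lemma, ergodicity, and the bookkeeping of a.i.\ sequences; and converting the resulting ``asymptotically central hyperfinite subrelation'' into an honest direct factor. These are the measured-equivalence-relation counterparts of the hard half of the McDuff--Connes characterisation of McDuff factors. The forward implication, by contrast, is a straightforward verification.
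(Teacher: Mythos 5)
You should first note that the paper contains no proof of this statement to compare against: it is imported verbatim from Jones--Schmidt \cite[Theorem~3.4]{js}, and all of the substance is delegated to that reference. Judged on its own terms, your forward implication is essentially complete and correct: realising $\calr_0$ as the orbit relation of $\bigoplus_{k}\Z/2\Z$ on $\{0,1\}^{\N}$, taking $U_k$ the $k$-th coordinate flip and $B_k=\{\xi_k=0\}$ does give $\mu(B_k)=1/2$ and $U_kB_k=B_k^c$, and conditions (a), (b) follow by approximation. One caution on (b): knowing that $V$ lies in (or is uniformly close to) some $[\calr\times\calr_0^{(N)}]$ with $N<k$ is \emph{not} by itself enough to make $U_k$ and $V$ commute, because such a $V$ may still depend measurably on the coordinate $\xi_k$ in how it moves the $X$-part and the first $N$ coordinates. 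You need the finer approximation obtained from a Feldman--Moore type decomposition of $V$ over a countable partition into pieces on which it acts as a fixed pair $(g,h)$ with $h$ supported on finitely many coordinates, together with approximation of those pieces by finite cylinders. With that adjustment the forward half is fine.

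The converse, however, is a programme rather than a proof, and the gap sits exactly where you acknowledge the difficulty lies. Two steps are asserted without argument. First, the passage from a sequence of automorphisms $U_{j(n)}$ that commute with $\calr$ only approximately to a \emph{coherent} nested sequence of finite subrelations $\mathcal{F}_n$ with exactly $2^n$-point classes satisfying your (i)--(iii) simultaneously requires a quantitative perturbation/Rokhlin argument that is not supplied; it is not clear from what you write how the errors accumulated at each stage are controlled so that the limit $\mathcal{H}$ is ergodic, hyperfinite of type ${\rm II}_1$, and still carries a non-trivial a.i.\ sequence for all of $\calr$. Second, and more seriously, the ``rotation'' upgrading the asymptotically central copy of $\calr_0$ to an honest direct-product decomposition $\calr\cong\mathcal{S}\times\calr_0$ is exactly the hard half of the McDuff-type theorem you invoke as an analogy: unlike in the von Neumann algebra setting there is no relative commutant to hand you the complementary factor $\mathcal{S}$, and producing it (or, as in the source, constructing the isomorphism $\calr\cong\calr\times\calr_0$ directly by an intertwining argument) is the actual content of \cite[Theorem~3.4]{js}. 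As written, the converse direction is a correct strategy statement whose two essential lemmas are left unproved.
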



\subsection{Abelian groups associated with integers}

Fix an integer $l$ with $l\geq 2$.
Put $E=\Z$.
We define a ring $E_l$ as the projective limit $\varprojlim E/l^nE$, which is compact and unital.
We have an embedding of $E$ into $E_l$ through the canonical projection from $E$ onto $E/l^nE$.
Each element $x$ of $E_l$ is uniquely written as the formal sum $x=\sum_{i=0}^{\infty}x_il^i$ with $x_i\in \{ 0, 1,\ldots, l-1\}$ for any $i\in \N$.

The additive group $E_l$ is torsion-free.
In fact, if $k$ is a positive integer and $a$ is an element of $E_l$ with $ka=0$, then choosing a sequence $\{ m_i\}_{i\in \N}$ in $E$ approaching $a$, we obtain the sequence $\{ km_i\}_{i\in \N}$ in $E$ approaching $0$.
It follows that for any $n\in \N$, there exists a number $I\in \N$ such that for any $i\in \N$ with $i\geq I$, the number $km_i$ is divisible by $l^n$.
This is equivalent to that for any $n\in \N$, there exists a number $J\in \N$ such that for any $i\in \N$ with $i\geq J$, the number $m_i$ is divisible by $l^n$.
We therefore have $a=0$.
The claim is proved.

Let $R_l$ denote the ring of fractions of $E_l$ by the multiplicative subset $S=\{\, l^n\in E\mid n\in \N\,\}$.
The ring $R_l$ consists of equivalence classes of all elements in $E_l\times S$, where two elements $(a, s), (b, t)\in E_l\times S$ are equivalent if and only if there exists an element $u\in S$ with $(at-bs)u=0$.
The equivalence class of $(a, s)\in E_l\times S$ is denoted by $a/s$. 
Using that the additive group $E_l$ is torsion-free, we can show that $E_l$ is naturally a subring of $R_l$.
The subring of $R_l$ generated by $1/l$ is isomorphic to the ring $\Z[1/l]$.

Suppose that we have two coprime integers $p$, $q$ with $2\leq p<q$ and $l=pq$.
We define $R_{l, q}$ as the subgroup of $R_l$ generated by $E_l$ and all elements of the form $1/q^n$ with $n\in \N$.
Similarly, we define $R_{l, p}$ as the subgroup of $R_l$ generated by $E_l$ and all elements of the form $1/p^m$ with $m\in \N$.
Each element $x$ of $R_{l, q}\setminus E_l$ is uniquely written as the formal sum
\[x=x_nq^n+x_{n+1}q^{n+1}+\cdots +x_{-1}q^{-1}+\sum_{j=0}^{\infty}x_jl^j\]
with $n\in \Z_-$; $x_i\in \{ 0, 1,\ldots, q-1\}$ for any integer $i$ with $n\leq i\leq -1$; $x_n\neq 0$; and $x_j\in \{ 0, 1,\ldots, l-1\}$ for any $j\in \N$.
Since $p$ and $q$ are coprime, the element $x$ is also uniquely written as the formal sum
\[x=y_n\left( \frac{q}{p}\right)^n+y_{n+1}\left( \frac{q}{p}\right)^{n+1}+\cdots +y_{-1}\left( \frac{q}{p}\right)^{-1}+\sum_{j=0}^{\infty}y_jl^j\]
with $y_i\in \{ 0, 1,\ldots, q-1\}$ for any integer $i$ with $n\leq i\leq -1$; $y_n\neq 0$; and $y_j\in \{ 0, 1,\ldots, l-1\}$ for any $j\in \N$.
The coefficients $y_n, y_{n+1},\ldots$ are inductively determined by $x_n, x_{n+1},\ldots$, and the difference $\sum_{j=0}^{\infty}x_jl^j-\sum_{j=0}^{\infty}y_jl^j$ lies in $E$.
We also have similar expansions of elements of $R_{l, p}\setminus E_l$.


\section{Actions of certain solvable quotients}\label{sec-sol}

Throughout this section, we fix two coprime integers $p$, $q$ with $1\leq p<q$.
We set $G_0=G_0(p, q)=\Z[1/p, 1/q]$.
Let $a$ denote the multiplicative unit $1$ in $G_0$.
Let $t$ denote the automorphism of the group $G_0$ multiplying by $q/p$.
We define a discrete group $G=G(p, q)$ as the semi-direct product of $G_0$ and the infinite cyclic group generated by $t$.
Note that for any positive integer $r$, the group $G$ is a quotient of the group $\bs(rp, rq)$.
The aim of this section is to construct an interesting ergodic p.m.p.\ action $G\c (Y, \nu)$, which will be used in the next section.

\subsection{The case of $p=1$}\label{subsec-y-1}

We assume $p=1$.
The group $G$ then has the presentation $\langle\, a, t\mid tat^{-1}=a^q\,\rangle$.
We set
\[Y=\prod_{\Z}\{ 0, 1,\ldots, q-1\}\]
and define a probability measure $\nu$ on $Y$ as the direct product of the uniformly distributed probability measure on $\{ 0, 1,\ldots, q-1\}$.
Let $a$ act on $Y$ by the odometer adding $1$ to the $0$th coordinate and increasing digits toward positive coordinates.
More precisely, for each $y=(y_n)_{n\in \Z}\in Y$, the element $ay=(z_n)_{n\in \Z}$ is determined by the formula $z_n=y_n$ for $n\in \Z_-$ and the equality
\[1+\sum_{n=0}^{\infty}y_nq^n=\sum_{n=0}^{\infty}z_nq^n\]
in the group $E_q$.
Let $t$ act on $Y$ by the shift toward the right.
Namely, for each $y=(y_n)_{n\in \Z}\in Y$, the element $ty=(w_n)_{n\in \Z}$ is determined by the formula $w_n=y_{n-1}$ for $n\in \Z$.
This defines an ergodic p.m.p.\ action of $G$ on $(Y, \nu)$.


\subsection{The case of $p>1$}

We assume $p>1$.
The group $G$ then has the presentation so that generators are $a_i$ for $i\in \Z$ and $t$, and relations are $[a_i, a_j]=e$, $a_{i+1}^p=a_i^q$ and $ta_it^{-1}=a_{i+1}$ for any $i, j\in \Z$.
Note that for each $i\in \Z$, $a_i$ corresponds to the number $(q/p)^i\in \Z[1/p, 1/q]$.
Under this identification, we have $a_0=a$.

We set
\[Y_-=\prod_{\Z_-}\{ 0, 1,\ldots, q-1\},\quad Y_0=\prod_{\N}\{ 0, 1,\ldots, pq-1\},\quad Y_+=\prod_{\Z_+}\{ 0, 1,\ldots, p-1\}.\]
We define a probability measure $\nu_-$ on $Y_-$ as the direct product of the uniformly distributed probability measure on $\{ 0, 1,\ldots, q-1\}$.
Similarly, we define probability measures $\nu_0$ and $\nu_+$ on $Y_0$ and $Y_+$, respectively.
We set
\[(Y, \nu)=(Y_-, \nu_-)\times (Y_0, \nu_0)\times (Y_+, \nu_+).\]
The set $Y_0$ is identified with the group $E_{pq}$ under the map sending each element $(x_k)_{k\in \N}$ of $Y_0$ to the sum $\sum_{k\in \N}x_k(pq)^k$.
In the following argument, it is convenient to regard each element of $Y$,
\[y=((y_n)_{n\in \Z_-}, y_0, (y_m)_{m\in \Z_+})\in Y_-\times Y_0\times Y_+,\]
as the formal sum
\[\cdots +y_{-2}\left(\frac{q}{p}\right)^{-2}+y_{-1}\left(\frac{q}{p}\right)^{-1}+y_0+y_1\frac{q}{p}+y_2\left(\frac{q}{p}\right)^2+\cdots.\]
We now define elements $a_iy\in Y$ for $i\in \Z$ and $ty\in Y$.

Let $a_0$ act on $Y$ by adding $1$ on $Y_0$, that is, set
\[a_0y=((y_n)_{n\in \Z_-}, 1+y_0, (y_m)_{m\in \Z_+}),\]
where $Y_0$ is identified with the group $E_{pq}$.
For each $i\in \Z_-$, let $a_i$ act on $Y$ by adding $(q/p)^i$ to $y$.
More precisely, the element
\[a_iy=((z_n)_{n\in \Z_-}, z_0, (z_m)_{m\in \Z_+})\in Y_-\times Y_0\times Y_+\]
of $Y$ is determined by the formula $z_n=y_n$ for any $n\in \Z$ with $n\leq i-1$ or $n\geq 1$ and the equation
\[\left(\frac{q}{p}\right)^i+\sum_{n=i}^{-1}y_n\left(\frac{q}{p}\right)^n+y_0=\sum_{n=i}^{-1}z_n\left(\frac{q}{p}\right)^n+z_0\]
in the group $R_{pq}$.
Similarly, for each $j\in \Z_+$, let $a_j$ act on $Y$ by adding $(q/p)^j$ to $y$.
More precisely, the element
\[a_jy=((w_n)_{n\in \Z_-}, w_0, (w_m)_{m\in \Z_+})\in Y_-\times Y_0\times Y_+\]
of $Y$ is determined by the formula $w_n=y_n$ for any $n\in \Z$ with $n\leq -1$ or $n\geq j+1$ and the equation
\[y_0+\sum_{m=1}^jy_m\left(\frac{q}{p}\right)^m+\left(\frac{q}{p}\right)^j=w_0+\sum_{m=1}^jw_m\left(\frac{q}{p}\right)^m\]
in the group $R_{pq}$.
One can check that for any $i\in \Z$, the Borel automorphism $a_i$ of $Y$ defined above preserves the measure $\nu$, and that for any $i, j\in \Z$, the relations $[a_i, a_j]=e$ and $a_{i+1}^p=a_i^q$ hold.

Let $\eta \colon Y_0\to \{ 0, 1,\ldots, p-1\}$ be the Borel map defined so that for any $x\in E_{pq}$, the element $x-\eta(x)$ belongs to $pE_{pq}$.
We define $ty\in Y$ by the formula
\[ty=((y_{n-1})_{n\in \Z_-},\ y_{-1}+\frac{q}{p}(y_0-\eta(y_0)),\ (\eta(y_0), y_1, y_2,\ldots \ )).\]

\begin{lem}
The map $t\colon Y\to Y$ defined above is a Borel automorphism of $Y$ and preserves the measure $\nu$.
\end{lem}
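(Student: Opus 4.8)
The plan is to unwind the formula for $t$ and exhibit it as a composite of elementary Borel bijections, each of which is manifestly measure-preserving. The guiding picture is that, writing a point $y\in Y$ as the formal sum $\cdots+y_{-1}(q/p)^{-1}+y_0+y_1(q/p)+\cdots$ with $y_0\in E_{pq}$, the point $ty$ is obtained by multiplying this sum by $q/p$ and renormalizing: after the multiplication every coefficient already lies in the prescribed range except that of $(q/p)^1$, which is $y_0\cdot(q/p)\notin E_{pq}$ in general, and $\eta$ is exactly the device splitting it, keeping $\eta(y_0)\in\{0,\dots,p-1\}$ in the $(q/p)^1$-slot and carrying the integral remainder $\tfrac{q}{p}(y_0-\eta(y_0))\in E_{pq}$ down into the $(q/p)^0$-slot. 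The only non-formal ingredient will be the following claim: for $n\in\{p,q\}$ the map
\[\phi_n\colon \{0,1,\dots,n-1\}\times E_{pq}\longrightarrow E_{pq},\qquad \phi_n(r,z)=r+nz,\]
is a Borel bijection carrying $u_n\times\nu_0$ to $\nu_0$, where $u_n$ is the uniform probability measure on $\{0,\dots,n-1\}$ and $\nu_0$ is regarded as the Haar probability measure of the compact group $E_{pq}$.

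Granting this, I would write $t$ as the following chain of measure-preserving Borel isomorphisms. Isolating the coordinate indexed by $-1$ identifies $Y_-$ with $\{0,\dots,q-1\}\times Y_-$ (the remaining coordinates, re-indexed, again forming $(Y_-,\nu_-)$); this extracts $y_{-1}$ and leaves $(y_{n-1})_{n\in\Z_-}$. Applying $\phi_p^{-1}$ to $y_0$ produces $\bigl(\eta(y_0),\,(y_0-\eta(y_0))/p\bigr)$, where $(y_0-\eta(y_0))/p$ is the unique $z\in E_{pq}$ with $pz=y_0-\eta(y_0)$: such $z$ exists since $y_0-\eta(y_0)\in pE_{pq}$ by the definition of $\eta$, and is unique since $E_{pq}$ is torsion-free (Section~\ref{sec-pre}). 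Feeding $y_{-1}$ together with $(y_0-\eta(y_0))/p$ into $\phi_q$ gives $y_{-1}+\tfrac{q}{p}(y_0-\eta(y_0))\in E_{pq}$, the new $Y_0$-coordinate. Finally, inserting a first factor as the coordinate indexed by $1$ and shifting identifies $\{0,\dots,p-1\}\times Y_+$ with $Y_+$; inserting $\eta(y_0)$ yields $(\eta(y_0),y_1,y_2,\dots)$. Matching with the displayed formula for $t$ shows that $t$ is precisely this composite, hence a measure-preserving Borel bijection of $(Y,\nu)=(Y_-,\nu_-)\times(Y_0,\nu_0)\times(Y_+,\nu_+)$ onto itself; reversing the steps gives a Borel inverse, so $t$ is a Borel automorphism preserving $\nu$.

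The main obstacle is the claim about $\phi_n$; everything else is bookkeeping. For bijectivity, $\{0,\dots,n-1\}$ is a complete set of coset representatives for $nE_{pq}$ in $E_{pq}$ (the case $n=p$ is built into the definition of $\eta$, and $n=q$ is identical), and multiplication by $n$ is injective on the torsion-free group $E_{pq}$, so $\phi_n$ maps bijectively onto $\bigsqcup_{r=0}^{n-1}(r+nE_{pq})=E_{pq}$, with Borel inverse $x\mapsto\bigl(x\bmod n,\,(x-(x\bmod n))/n\bigr)$. For the measure statement, $z\mapsto nz$ is a continuous injective endomorphism of the compact group $E_{pq}$ onto its closed index-$n$ subgroup $nE_{pq}$, so it pushes $\nu_0$ to the Haar probability measure of $nE_{pq}$; as a measure on $E_{pq}$ this equals $n\cdot\nu_0|_{nE_{pq}}$, each coset of $nE_{pq}$ having $\nu_0$-measure $1/n$ by translation invariance. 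Translating the $r$-th piece by $r$ and averaging over $r$ with weights $1/n$ returns $\sum_{r=0}^{n-1}\nu_0|_{r+nE_{pq}}=\nu_0$, which is exactly $(\phi_n)_*(u_n\times\nu_0)=\nu_0$. I expect this last step — in effect, the assertion that the digit-carrying operations on $E_{pq}$ respect Haar measure — to be the only point requiring genuine care.
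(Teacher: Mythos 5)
Your argument is correct, and it reorganizes the paper's proof rather than reproducing it. The paper proceeds by (a) exhibiting the inverse map explicitly, using the mod-$q$ digit function $\zeta$ in place of $\eta$, and (b) verifying $\nu(tB)=\nu(B)$ directly on a generating family of cylinder sets, where the key factor $(p/q)\nu_0(A)$ in that computation is precisely the $\nu_0$-measure of $k_{-1}+\frac{q}{p}(-i+A)$. You instead factor $t$ into a chain of elementary measure isomorphisms and isolate the single non-trivial ingredient as the lemma that $\phi_n(r,z)=r+nz$ carries the product of the uniform measure on $\{0,\dots,n-1\}$ with $\nu_0$ to Haar measure on $E_{pq}$, for $n\in\{p,q\}$. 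Bijectivity and a Borel inverse then come for free from the factorization, with no need to write down $t^{-1}$, and the Haar-measure fact --- which the paper uses implicitly in its cylinder-set computation without comment --- is proved cleanly via the open index-$n$ subgroup $nE_{pq}$ and translation invariance. The two proofs ultimately rest on the same fact about how multiplication by $p$ and $q$ interacts with Haar measure on $E_{pq}$; yours makes that fact explicit and avoids the cylinder-set bookkeeping, at the modest cost of having to justify that $\{0,\dots,n-1\}$ is a complete set of coset representatives for $nE_{pq}$ (which the paper's definitions of $\eta$ and $\zeta$ already presuppose, and which your injectivity-plus-torsion-freeness argument covers). Both are complete proofs of the lemma.
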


\begin{proof}
Let $\zeta \colon Y_0\to \{ 0, 1,\ldots, q-1\}$ be the Borel map defined so that for any $x\in E_{pq}$, the element $x-\zeta(x)$ belongs to $qE_{pq}$.
The inverse of the map $t$ is the map sending each element $y$ of $Y$, written as
\[y=((y_n)_{n\in \Z_-}, y_0, (y_m)_{m\in \Z_+})\in Y_-\times Y_0\times Y_+,\]
to
\[((\ \ldots, y_{-2}, y_{-1}, \zeta(y_0)),\ \frac{p}{q}(y_0-\zeta(y_0))+y_1,\ (y_{m+1})_{m\in \Z_+})\in Y_-\times Y_0\times Y_+.\]
The map $t$ is thus a Borel automorphism of $Y$.

Let $N$ and $M$ be positive integers.
Pick
\[k_{-N}, k_{-N+1},\ldots, k_{-1}\in \{ 0, 1,\ldots, q-1\}\quad \textrm{and}\quad l_1, l_2,\ldots, l_M\in \{ 0, 1,\ldots, p-1\}.\]
We also pick $i\in \{ 0, 1,\ldots, p-1\}$ and a Borel subset $A$ of $i+pE_{pq}$.
We define the Borel subset of $Y$,
\begin{align*}
B=\{\, ((y_n)_n, y_0, (y_m)_m)\in Y\mid y_n&=k_n,\ \forall n\in \{ -N, -N+1,\ldots, -1\},\\
y_0&\in A,\ y_m=l_m,\ \forall m\in \{ 1, 2,\ldots, M\}\,\}.
\end{align*}
We then have $\nu(B)=q^{-N}\nu_0(A)p^{-M}$ and
\begin{align*}
tB&=\{\, ((y_n)_n, y_0, (y_m)_m)\in Y\mid y_n=k_{n-1},\ \forall n\in \{ -N+1, -N+2,\ldots, -1\},\\
&y_0\in k_{-1}+\frac{q}{p}(-i+A),\ y_1=i,\ y_m=l_{m-1},\ \forall m\in \{ 2, 3,\ldots, M+1\}\,\}.
\end{align*}
The equality $\nu(tB)=q^{-N+1}(p/q)\nu_0(A)p^{-1}p^{-M}=\nu(B)$ thus holds.
It follows that $t$ preserves $\nu$.
\end{proof}

One can directly check the relation $ta_it^{-1}=a_{i+1}$ as Borel automorphisms of $Y$ for any $i\in \Z$.
We therefore obtain a p.m.p.\ action of $G$ on $(Y, \nu)$.

Let $\mathcal{S}_0$ denote the equivalence relation on $(Y, \nu)$ defined as follows:
Two elements of $Y$,
\[y=((y_n)_{n\in \Z_-}, (\bar{y}_i)_{i\in \N}, (y_m)_{m\in \Z_+})\quad \textrm{and}\quad z=((z_n)_{n\in \Z_-}, (\bar{z}_i)_{i\in \N}, (z_m)_{m\in \Z_+}),\]
are equivalent in $\mathcal{S}_0$ if and only if $y_n=z_n$ for all but finitely many $n\in \Z_-$, $\bar{y}_i=\bar{z}_i$ for all but finitely many $i\in \N$, and $y_m=z_m$ for all but finitely many $m\in \Z_+$.
For a.e.\ $y\in Y$, the equivalence class of $y$ with respect to $\mathcal{S}_0$ is equal to the orbit of $y$ under the action of the subgroup $G_0$ on $Y$.
The action of $G_0$ on $(Y, \nu)$ is therefore ergodic.

\begin{rem}\label{rem-p1}
In the above construction of the action $G\c (Y, \nu)$, if we substitute $1$ for $p$, then $Y_+$ consists of a single point, and the action coincides with the action constructed in Section \ref{subsec-y-1}.
We however decided to discuss the cases of $p=1$ and $p>1$ individually to avoid confusion.
\end{rem}

Let $\theta \colon Y\to Y_0$ denote the projection.
The following lemma will be used in the proof of Lemma \ref{lem-2-ai}.

\begin{lem}\label{lem-theta}
For any $s\in G_0$ and a.e.\ $y\in Y$, there exists a number $K=K(s, y)\in \N$ such that for any $k\in \N$ with $k\geq K$, we have $\theta(t^{-k}sy)=\theta(t^{-k}y)$.
\end{lem}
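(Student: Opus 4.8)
The plan is to track how the element $s \in G_0$ acts on $y$ after it has been "pushed down" by a large power of $t$, and to show that its effect on the $Y_0$-coordinate stabilizes. First I would reduce to the case where $s = a_i$ for a single $i \in \Z$, since $G_0$ is generated by the $a_i$ and the set of $s$ for which the conclusion holds is closed under products (if $K$ works for $s$ and $s'$ at $y$, then a suitable $K$ works for $ss'$ at $y$, because $\theta(t^{-k}ss'y) = \theta(t^{-k}sy')$ where $y' = s'y$, and we can compare $t^{-k}sy'$ with $t^{-k}y'$ and then $t^{-k}y'$ with $t^{-k}y$). By the relation $ta_it^{-1} = a_{i+1}$, conjugating by $t^{-k}$ gives $t^{-k}a_i = a_{i-k}t^{-k}$, so $t^{-k}sy = a_{i-k}(t^{-k}y)$; thus the claim becomes: for a.e.\ $y$ and all $k$ large, adding $(q/p)^{i-k}$ to the point $t^{-k}y$ does not change its $Y_0$-coordinate, i.e.\ $\theta(a_{i-k} z) = \theta(z)$ where $z = t^{-k}y$ and $i - k$ is a very negative integer.

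The key point is then the description of how $a_j$ acts for $j \in \Z_-$: by definition, $a_j z$ only alters the coordinates $z_j, z_{j+1}, \dots, z_{-1}, z_0$ through the equation
\[
\left(\frac{q}{p}\right)^{j} + \sum_{n=j}^{-1} z_n\left(\frac{q}{p}\right)^n + z_0 = \sum_{n=j}^{-1} (a_jz)_n\left(\frac{q}{p}\right)^n + (a_jz)_0
\]
in $R_{pq}$. The $Y_0$-coordinate $z_0$ is unchanged precisely when no "carry" propagates from the negative positions up into position $0$; using the unique expansions of elements of $R_{pq,q}$ recalled in Section~\ref{sec-pre}, a carry out of the negative part into $Y_0$ can occur only when the digits $z_j, z_{j+1}, \dots, z_{-1}$ are all at their maximal value $q-1$. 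So it suffices to show that for a.e.\ $y$, for all $k$ sufficiently large the point $z = t^{-k}y$ has $(t^{-k}y)_{-1} \neq q-1$, or more robustly that the string of negative digits of $t^{-k}y$ starting from position $i-k$ is not all $q-1$'s. Now the negative coordinates of $t^{-k}y$ are governed by the map $t^{-1}$, which on $Y_-$ acts essentially by a shift together with feeding in the digit $\zeta(y_0^{(k)})$ from the evolving $Y_0$-coordinate; concretely, $(t^{-k}y)_{-1}$ is determined by digits of $y$ that move further and further out as $k$ grows, and under $\nu$ these are independent and uniformly distributed on $\{0,1,\dots,q-1\}$. A Borel–Cantelli argument — the events "$(t^{-k}y)_{-1} = q-1$" (or the relevant finite block is all $q-1$) have summable, or at least controllable, probabilities once one sets things up so the relevant digit is genuinely "new" at each step — shows that for a.e.\ $y$ only finitely many $k$ are bad, which is exactly the existence of $K(s,y)$.

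The main obstacle I expect is bookkeeping the carry: one must verify carefully, from the uniqueness of the formal expansions in $R_{pq,q}$, that changing the term $(q/p)^{i-k}$ affects $z_0$ \emph{only} when an unbroken run of maximal digits $q-1$ sits between position $i-k$ and position $-1$, and then arrange the measure-theoretic independence so that such long runs (or even a single bad digit at position $-1$) fail to occur for all large $k$. A clean way to handle this is to fix, for each $k$, the finitely many coordinates of $y$ on which $(t^{-k}y)_{-1}$ (equivalently, the relevant negative block of $t^{-k}y$) depends, note these index sets drift to infinity as $k \to \infty$ so that the corresponding events become eventually independent of any fixed finite set of coordinates, and apply a second Borel–Cantelli / zero–one argument; alternatively one invokes ergodicity of the $G_0$-action established just above. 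Either route gives, for a.e.\ $y$, the desired $K$, and assembling the single-generator case into the general $s$ via the closure-under-products remark completes the proof.
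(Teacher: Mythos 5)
Your overall strategy is the same as the paper's: reduce to the generators $a_i$ by closure of the statement under products, use $t^{-k}a_it^{k}=a_{i-k}$ to turn the question into whether adding $(q/p)^{i-k}$ to $t^{-k}y$ produces a carry reaching the $Y_0$-coordinate, and rule this out by a measure estimate on runs of large digits. However, your carry analysis contains a concrete error. When a carry leaves a position $j$ (which at the initial position happens exactly when the digit there is $q-1$), the identity $q(q/p)^{j}=p(q/p)^{j+1}$ shows that the amount deposited at position $j+1$ is $p$, not $1$. Hence the carry continues through a position $l$ whenever $z_l\geq q-p$ and is blocked only at a digit $\leq q-p-1$; for $p\geq 2$ this is strictly weaker than your asserted necessary condition that the whole run of digits be equal to $q-1$. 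This is exactly the content of Claim \ref{claim-eq}, and with your threshold the event you propose to exclude is strictly smaller than the event that actually must be excluded.

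Your primary sufficient condition --- that the $(-1)$st digit of $t^{-k}y$ be $\neq q-1$ (or, corrected, $\leq q-p-1$) for \emph{all} sufficiently large $k$ --- is moreover unattainable: writing $\tau_m(y)=\rho_{-1}(t^{-m-1}y)$ as in the paper, the set where $\tau_m(y)\neq q-1$ for all $m\geq 0$ has measure at most $((q-1)/q)^{k+1}$ for every $k$ by the same push-forward computation as in Claim \ref{claim-a-zero}, so it is null, and hence a.e.\ $y$ has $\tau_m(y)=q-1$ for infinitely many $m$. What saves the argument (and is the paper's route, close to your ``more robust'' fallback) is that one does not need the digit at position $-1$ of $t^{-k}y$ to be good for each $k$ separately: a single index $K$ with $\tau_K(y)\leq q-p-1$ blocks the carry for \emph{every} $k>K$ simultaneously, because that digit occupies position $-k+K\leq -1$ of $t^{-k}y$ and the carry must pass through it before reaching $Y_0$. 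The exceptional set is then $\bigcap_m\{\,y\mid \tau_m(y)\geq q-p\,\}$, which has measure at most $(p/q)^{k+1}$ for each $k$ and is therefore null (Claim \ref{claim-a-zero}); no Borel--Cantelli or independence argument is required, which is fortunate since the joint distribution of the $\tau_m$ is not transparent. With these two corrections your proof becomes the paper's.
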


\begin{proof}
The product of two elements of the group $G_0$ is denoted multiplicatively.
Suppose that the lemma is true for $s_1, s_2\in G_0$.
For a.e.\ $y\in Y$ and any integer $k$ bigger than $K(s_1, s_2y)$ and $K(s_2, y)$, the equality
\[\theta(t^{-k}(s_1s_2)y)=\theta(t^{-k}s_1(s_2y))=\theta(t^{-k}s_2y)=\theta(t^{-k}y)\]
holds.
The lemma is therefore true for $s_1s_2$.

It is enough to show the lemma when $s=t^iat^{-i}=(q/p)^i$ for some $i\in \Z$ because all elements of this form generate $G_0$.
If $i$ is non-negative, then for any integer $k$ and any $y\in Y$, we have the equalities $t^{-k}sy=t^{-k+i+1}(t^{-i-1}st^{i+1})(t^{-i-1}y)$ and $t^{-i-1}st^{i+1}=(q/p)^{-1}$.
It is therefore enough to show the lemma when $s=(q/p)^i$ with $i\in \Z_-$.

For $n\in \Z_-$, let $\rho_n\colon Y\to \{ 0, 1,\ldots, q-1\}$ denote the projection onto the $n$th coordinate of $Y_-$.
For $m\in \N$, we define a Borel map $\tau_m\colon Y\to \{ 0, 1,\ldots, q-1\}$ by $\tau_m(y)=\rho_{-1}(t^{-m-1}y)$ for $y\in Y$.
We have the equality $\rho_n(t^{-m+n}y)=\rho_{-1}(t^{-m-1}y)$ for any $m\in \N$, $n\in \Z_-$ and $y\in Y$.

\begin{claim}\label{claim-a-zero}
The Borel subset of $Y$,
\[A=\{\, y\in Y\mid \tau_m(y)\geq q-p,\ \forall m\in \N \,\},\]
has zero $\nu$-measure.
\end{claim}

\begin{proof}
For $k\in \N$, we set
\[A_k=\{\, y\in Y\mid \tau_m(y)\geq q-p,\ \forall m\in \{ 0, 1,\ldots, k\}\,\}.\]
For any $k\in \N$ and any $y=((y_n)_n, y_0, (y_m)_m)\in Y$, we have
\[t^{-k-1}y=\cdots +y_{-1}\left(\frac{q}{p}\right)^{-k-2}+\sum_{n=-k-1}^{-1}\tau_{n+k+1}(y)\left(\frac{q}{p}\right)^n+\cdots.\]
The inclusion
\[t^{-k-1}A_k\subset \bigcap_{n=-k-1}^{-1}\rho_n^{-1}(\{ q-p, q-p+1,\ldots, q-1\})\]
thus holds.
It follows that $\nu(A_k)=\nu(t^{-k-1}A_k)\leq (p/q)^{k+1}$.
The inclusion $A\subset A_k$ for any $k\in \N$ implies that $\nu(A)=0$.
\end{proof}

\begin{claim}\label{claim-eq}
Fix $n, m\in \Z_+$.
Pick a number $z_l\in \{ 0, 1,\ldots, q-1\}$ indexed by each integer $l$ with $-n-m+1\leq l\leq -n$.
If $z_{-n}\leq q-p-1$, then for each integer $l$ with $-n-m+1\leq l\leq -n$, there exists a unique number $z_l'\in \{ 0, 1,\ldots, q-1\}$ with
\[q\left(\frac{q}{p}\right)^{-n-m}+\sum_{l=-n-m+1}^{-n}z_l\left(\frac{q}{p}\right)^l=\sum_{l=-n-m+1}^{-n}z_l'\left(\frac{q}{p}\right)^l.\]
\end{claim}

\begin{proof}
We find the number $z_l'$ by induction on $m$.
If $m=1$, then the equation
\[q\left(\frac{q}{p}\right)^{-n-1}+z_{-n}\left(\frac{q}{p}\right)^{-n}=(p+z_{-n})\left(\frac{q}{p}\right)^{-n}\]
holds.
It thus suffices to set $z_{-n}'=p+z_{-n}$.
In general, if $p+z_{-n-m+1}\leq q-1$, then we set $z_{-n-m+1}'=p+z_{-n-m+1}$ and $z_l'=z_l$ for any $l$ with $-n-m+2\leq l\leq -n$.
Otherwise, the left hand side of the equation in the claim is equal to
\[(p+z_{-n-m+1}-q)\left(\frac{q}{p}\right)^{-n-m+1}+q\left(\frac{q}{p}\right)^{-n-m+1}+\sum_{l=-n-m+2}^{-n}z_l\left(\frac{q}{p}\right)^l.\]
By using the inequality $0\leq p+z_{-n-m+1}-q\leq q-1$ and the hypothesis of the induction, we can find the number $z_l'$ in the claim.
Uniqueness of $z_l'$ holds because $p$ and $q$ are coprime.
\end{proof}

We now prove Lemma \ref{lem-theta} for $s=(q/p)^i\in G_0$ with $i\in \Z_-$.
Fix $K\in \N$.
For any $k\in \N$ with $k>K$ and any $y=((y_n)_n, y_0, (y_m)_m)\in Y$, we have
\[t^{-k}y=\cdots +\sum_{n=i-k}^{-1-k}y_{n+k}\left(\frac{q}{p}\right)^n+\sum_{m=-k}^{-k+K}\tau_{m+k}(y)\left(\frac{q}{p}\right)^m+\cdots.\]
The equalities $t^{-k}sy=t^{-k}st^k(t^{-k}y)$ and $t^{-k}st^k=(q/p)^{i-k}$ imply that $t^{-k}sy$ is obtained by adding $(q/p)^{i-k}$ to $t^{-k}y$.
If $y_i<q-1$, then the coordinates of $t^{-k}sy$ and $t^{-k}y$ except for the $(i-k)$th one of $Y_-$ are equal.
If $y_i=q-1$ and $\tau_K(y)\leq q-p-1$, then by Claim \ref{claim-eq}, the coordinates of $t^{-k}sy$ and $t^{-k}y$ except for the $j$th one of $Y_-$ with $i-k\leq j\leq -k+K$ are equal.
We thus have $\theta(t^{-k}sy)=\theta(t^{-k}y)$ for any $k\in \N$ with $k>K$ and any $y\in Y$ with $\tau_K(y)\leq q-p-1$.
Lemma \ref{lem-theta} then follows from Claim \ref{claim-a-zero}.
\end{proof}


\section{Stable actions of Baumslag-Solitar groups}\label{sec-s}

Throughout this section, we fix two coprime integers $p$, $q$ with $1\leq p<q$, and fix a positive integer $r$ with $rp\geq 2$.
We set
\[\Gamma =\bs(rp, rq)=\langle\, a, t\mid ta^{rp}t^{-1}=a^{rq}\,\rangle \]
and set $E=\langle a\rangle$.
Let $H$ be the normal subgroup of $\Gamma$ generated by $a$.
The quotient group $\Gamma /H$ is isomorphic to $\Z$ and is generated by the image of $t$.
Let $G_0=G_0(p, q)$ and $G=G(p, q)$ be the groups defined in the beginning of Section \ref{sec-sol}.
We have the surjective homomorphism $\epsilon \colon \Gamma \to G$ sending $a$ to the multiplicative unit $1$ of $G_0$ and sending $t$ to the automorphism of $G_0$ multiplying by $q/p$. 
Let $N$ be the kernel of $\epsilon$.

We set
\[X=\prod_{\N}\{ 0, 1\}\]
and define a probability measure $\mu$ on $X$ as the direct product of the uniformly distributed probability measure on $\{ 0, 1\}$.
We define an ergodic p.m.p.\ action of $\Gamma$ on $(X, \mu)$ so that $a$ acts on it trivially, and $t$ acts on it by the odometer adding $1$ to the $0$th coordinate of $X$ and increasing digits toward the right.
Note that $H$ acts on $X$ trivially.
For $j\in \N$ and $l_0, l_1,\ldots, l_j\in \{ 0, 1\}$, we set
\[X(l_0, l_1, \ldots, l_j)=\{ (x_n)_{n\in \N}\in X\mid x_0=l_0,\ x_1=l_1,\ldots, x_j=l_j\,\}.\]
Let $G\c (Y, \nu)$ be the action constructed in Section \ref{sec-sol}.
Let $\Gamma$ act on $(Y, \nu)$ through the homomorphism $\epsilon \colon \Gamma \to G$.

\subsection{Co-induced actions}\label{subsec-co}

We set
\[Z_0=\prod_{\N}\{ 0, 1,\ldots, pq-1\}\]
and define a probability measure $\xi_0$ on $Z_0$ as the direct product of the uniformly distributed probability measure on $\{ 0, 1,\ldots, pq-1\}$.
Let $E$ act on $(Z_0, \xi_0)$ by odometers so that $a$ adds $1$ to the $0$th coordinate and increases digits toward the right.
We set
\[(Z, \xi)=\prod_{\Gamma/E}(Z_0, \xi_0)\]
and define an action of $\Gamma$ on $(Z, \xi)$ as the action co-induced from the action $E\c (Z_0, \xi_0)$.
Namely, fixing a section $s\colon \Gamma /E\to \Gamma$ for the canonical map from $\Gamma$ onto $\Gamma/E$, for $\gamma \in \Gamma$ and $f\in Z$, we define an element $\gamma f\in Z$ by the equation
\[(\gamma f)(\alpha)=b^{-1}f(\beta)\]
for $\alpha \in \Gamma /E$, where $b\in E$ and $\beta \in \Gamma /E$ are the unique elements determined by the equation $s(\beta)b=\gamma^{-1}s(\alpha)$.
This indeed defines a p.m.p.\ action of $\Gamma$ on $(Z, \xi)$.
One can check that this action is essentially free, that is, the stabilizer of a.e.\ point of $Z$ for the action is trivial, by using that the action of $E$ on $(Z_0, \xi_0)$ is essentially free.
The construction of co-induced actions appears in \cite[Section 3.4]{gab-ex}.

\begin{lem}\label{lem-co-ind}
The following assertions hold:
\begin{enumerate}
\item For any $g\in \Gamma$, there exist $K, K', L, L'\in \N$ such that $K+L=K'+L'$ and for any $k, l\in \N$, we have $ga^{rp^{K+k}q^{L+l}}g^{-1}=a^{rp^{K'+k}q^{L'+l}}$. 
\item For any sequence $\{ n_k\}_{k\in \N}$ of positive integers and any Borel subset $B$ of $Z$, we have
\[\lim_{k\to \infty}\xi(a^{rn_k(pq)^k}B\bigtriangleup B)=0.\]
This convergence is indeed uniform with respect to the sequence $\{ n_k\}_{k\in \N}$.
Namely, for any $\varepsilon >0$ and any Borel subset $B$ of $Z$, there exists $K\in \N$ such that for any sequence $\{ n_k\}_{k\in \N}$ of positive integers and for any $k\in \N$ with $k\geq K$, we have $\xi(a^{rn_k(pq)^k}B\bigtriangleup B)<\varepsilon$.
\item The action of $N$ on $(Z, \xi)$ is ergodic.
\end{enumerate}
\end{lem}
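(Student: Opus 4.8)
The three assertions are proved in order, with (i) feeding into (ii) and the co-induced structure of $(Z,\xi)$ feeding into (iii).

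\textbf{Assertion (i).} This is a statement inside $\Gamma$, and the plan is an induction on the word length of $g$ in the generators $a^{\pm 1},t^{\pm 1}$. Conjugation by $a$ fixes every power of $a$, and the defining relation $ta^{rp}t^{-1}=a^{rq}$ gives $ta^{rp^{K}q^{L}}t^{-1}=a^{rp^{K-1}q^{L+1}}$ whenever $K\geq 1$ and $t^{-1}a^{rp^{K}q^{L}}t=a^{rp^{K+1}q^{L-1}}$ whenever $L\geq 1$. Consequently conjugation by a fixed $g$ carries $a^{rp^{K}q^{L}}$ to $a^{rp^{K+d}q^{L-d}}$ for an integer $d=d(g)$ independent of $K,L$ (in fact $d(g)$ is minus the image of $g$ under $\Gamma\to\Gamma/H\cong\Z$, hence additive in $g$), provided $K$ and $L$ both exceed a threshold $C(g)$ depending only on $g$; the sole point to check in the induction step is that these thresholds and shifts compose correctly when $g$ is split into a product. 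Taking $K=L=C(g)$, $K'=K+d$, $L'=L-d$ yields the claim, and $K+L=K'+L'$ records that conjugation preserves the exponent sum $K+L$.

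\textbf{Assertion (ii).} Recall $(Z,\xi)=\prod_{\Gamma/E}(Z_0,\xi_0)$, so an arbitrary Borel set is approximated in measure by cylinders depending on the coordinates in a finite set $F\subset\Gamma/E$ and, within each such coordinate, on at most the first $d$ base-$pq$ digits. It then suffices to fix such a cylinder $B$ and show $a^{M}B=B$ for all large $k$, where $M=rn_k(pq)^k$; uniformity in $\{n_k\}$ will be automatic because every bound below depends only on $F$ and $d$. Writing the co-induced action as $(a^{M}f)(\alpha)=b^{-1}f(\beta)$ with $s(\beta)b=a^{-M}s(\alpha)$, one evaluates $s(\alpha)^{-1}a^{M}s(\alpha)$ for $\alpha\in F$ by applying assertion (i) to $g=s(\alpha)^{-1}$: since $a^{M}=(a^{rp^{k}q^{k}})^{n_k}$, for $k$ beyond $C(s(\alpha)^{-1})$ this is an honest power $a^{M(\alpha)}$ of $a$ with $M(\alpha)=rn_k\,p^{k+e_\alpha}q^{k-e_\alpha}$ for an integer $e_\alpha$ independent of $k$ and of $\{n_k\}$. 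In particular $b=s(\alpha)^{-1}a^{-M}s(\alpha)\in E$, so $\beta=\alpha$: the coordinate $\alpha$ is fixed and $a^{M}$ acts there by the odometer power $a^{M(\alpha)}$. As $M(\alpha)$ is divisible by $(pq)^{k-|e_\alpha|}$ and an odometer power $a^{(pq)^{j}c}$ leaves the first $j$ base-$pq$ digits of $Z_0$ untouched, choosing $k\geq d+|e_\alpha|$ and $k>C(s(\alpha)^{-1})$ for every $\alpha\in F$ forces $a^{M}B=B$, as desired.

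\textbf{Assertion (iii).} Here the plan is to use that co-induction restricts well to the normal subgroup $N$. Since $\epsilon$ is injective on $E=\langle a\rangle$ (it sends $a^j$ to $j\in\mathbb{Q}$), we have $N\cap E=\{e\}$, and normality of $N$ then gives $N\cap g(NE)g^{-1}=N$ for every $g\in\Gamma$. Write $NE=N\rtimes\Z$ and co-induce in stages, $Z=\mathrm{CoInd}_E^{\Gamma}Z_0=\mathrm{CoInd}_{NE}^{\Gamma}\mathrm{CoInd}_E^{NE}Z_0$; the Mackey decomposition of the restriction to $N$ then identifies $Z|_N$ with $\prod_{N\backslash\Gamma/NE}\bigl(\mathrm{CoInd}_E^{NE}(Z_0)|_N\bigr)$, and on $NE=N\rtimes\Z$ the copy $N$ of a transversal of $(N\rtimes\Z)/E$ carries no twisting cocycle, so each factor is exactly the Bernoulli shift $N\curvearrowright(Z_0,\xi_0)^{N}$. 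Hence $N\curvearrowright(Z,\xi)$ is measurably conjugate to a Bernoulli action of $N$ over the base $(Z_0,\xi_0)^{N\backslash\Gamma/NE}$. Since $N$ is normal and nontrivial (e.g. $[a,tat^{-1}]\in N\setminus\{e\}$, as $a$ and $tat^{-1}$ do not commute in $\Gamma$ when $rp\geq 2$) while $\Gamma$ is torsion-free, $N$ is infinite, and a Bernoulli action of an infinite group is ergodic; this gives (iii).

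The main obstacle is the bookkeeping in assertion (ii): one must push the normal form from (i) through the defining formula of the co-induced action and verify that, past a threshold depending only on the chosen cylinder and not on $\{n_k\}$, each relevant coordinate is genuinely fixed by $a^{M}$ and the induced odometer power is divisible by a sufficiently high power of $pq$. Assertions (i) and (iii) are comparatively routine once the additivity of $d(g)$ and the Mackey decomposition of a co-induced action are in hand.
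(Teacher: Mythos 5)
Your proof is correct and follows essentially the same route as the paper: the normal-form computation in $\Gamma$ for (i), reduction to cylinders over a finite subset of $\Gamma/E$ combined with (i) and the $(pq)$-adic odometer for (ii), and the identification of the restriction to $N$ as a generalized Bernoulli action for (iii). The paper's proof of (iii) is just the one-line observation that $N$ is infinite and acts freely on $\Gamma/E$; your two-step co-induction/Mackey decomposition packages the same fact in a more elaborate but equivalent way.
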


\begin{proof}
The presentation of $\Gamma$ implies that for any $g\in \Gamma$, there exist $K, L\in \N$ with $ga^{rp^Kq^L}g^{-1}\in E$, which is equal to $a^{rp^{K'}q^{L'}}$ for some $K', L'\in \N$ with $K+L=K'+L'$.
The equality in assertion (i) then follows.

By the definition of odometers, for any sequence $\{ m_k\}_{k\in \N}$ of positive integers and any Borel subset $C$ of $Z_0$, we have
\[\lim_{k\to \infty}\xi_0(a^{m_k(pq)^k}C\bigtriangleup C)=0,\]
and this convergence is uniform with respect to the sequence $\{ m_k\}_{k\in \N}$.
Let $F$ be a finite subset of $\Gamma/E$.
By assertion (i), there exist $K, L\in \N$ satisfying the following:
For any $\alpha \in F$, we have $K_{\alpha}, L_{\alpha}\in \N$ such that $K+L=K_{\alpha}+L_{\alpha}$, and if $\{ n_k\}_{k\in \N}$ is a sequence of positive integers, then for any $k\in \N$ with $k\geq K+L$, the equality
\begin{align*}
a^{-rn_k(pq)^k}s(\alpha)&=a^{-rn_kp^Kq^Lp^{k-K}q^{k-L}}s(\alpha)=s(\alpha)a^{-rn_kp^{K_{\alpha}}q^{L_{\alpha}}p^{k-K}q^{k-L}}\\
&=s(\alpha)a^{-rn_kp^{K_{\alpha}+L}q^{L_{\alpha}+K}(pq)^{k-K-L}}
\end{align*}
holds.
Let $\pi_F\colon Z\to \prod_FZ_0$ be the natural projection.
For any sequence $\{ n_k\}_{k\in \N}$ of positive integers and any Borel subset $D$ of $\prod_FZ_0$, we therefore have
\[\lim_{k\to \infty}\xi(a^{rn_k(pq)^k}\pi_F^{-1}(D)\bigtriangleup \pi_F^{-1}(D))=0\]
uniformly with respect to the sequence $\{ n_k\}_{k\in \N}$.
Assertion (ii) is proved.

The assumption $rp\geq 2$ implies that $N$ is infinite.
Assertion (iii) holds because $N$ acts on $\Gamma /E$ freely.
\end{proof}

We set
\[(W, \omega)=(X, \mu)\times (Y, \nu)\times (Z, \xi)\]
and define a p.m.p.\ action $\Gamma \c (W, \omega)$ as the diagonal action so that for $\gamma \in \Gamma$ and $w=(x, y, z)\in W$, we have $\gamma w=(\gamma x, \gamma y, \gamma z)$.
The action $\Gamma \c (W, \omega)$ is ergodic because so are the actions $N\c (Z, \xi)$, $H/N\c (Y, \nu)$ and $\Gamma /H\c (X, \mu)$.
The action $\Gamma \c (W, \omega)$ is essentially free because so is the action $\Gamma \c (Z, \xi)$.
We will show that the action $\Gamma \c (W, \omega)$ has a non-trivial a.c.\ sequence and is therefore stable.
This is enough to show Theorem \ref{thm-stable} for $\Gamma$ because one can find a $\Gamma$-invariant conull Borel subset of $W$ on which $\Gamma$ acts freely.


\subsection{The case of $p=1$}\label{subsec-s-1}

We assume $p=1$.
Fix a positive integer $M$ with $q^M>r$.
We set
\[S=\prod_{\N}\{ 0, 1,\ldots, q^M-1\}\]
and define a probability measure $\sigma$ on $S$ as the direct product of the uniformly distributed probability measure on $\{ 0, 1,\ldots, q^M-1\}$.
The {\it odometer relation} on $(S, \sigma)$, denoted by $\calr_0$, is defined so that two elements $(c_i)_i, (c_i')_i\in S$ are equivalent if and only if for any sufficiently large $i\in \N$, we have $c_i=c_i'$.

Recall that we have constructed the action of $\Gamma$ on the space
\[Y=\prod_{\Z}\{ 0, 1,\ldots, q-1\}\]
with the probability measure $\nu$.
Let $\Gamma$ act on $X\times Y$ diagonally.
For each $j\in \N$, we define $\theta_j\colon X\times Y\to \{ 0, 1,\ldots, q-1\}$ as the projection onto the $j$th coordinate of $Y$.

We define a Borel map $\pi \colon X\times Y\to S$ as follows.
Pick an element
\[\lambda =((x_n)_{n\in \N}, (y_m)_{m\in \Z})\in X\times Y.\]
For each $j\in \N$, the $j$th coordinate of $\pi(\lambda)\in S$, denoted by $\pi(\lambda)_j$, is defined by
\[\pi(\lambda)_j=\sum_{k=0}^{M-1}\theta_{jM+k}(t^{-x_0-2x_1-\cdots -2^jx_j}\lambda)q^k=\sum_{k=0}^{M-1}y_{jM+k+x_0+2x_1+\cdots +2^jx_j}q^k.\]

\begin{lem}\label{lem-1-pi}
In the above notation, the following assertions hold:
\begin{enumerate}
\item The equality $\pi_*(\mu \times \nu)=\sigma$ holds.
\item For any $j\in \N$, the map $\pi(\cdot)_j\colon X\times Y\to \{ 0, 1,\ldots, q^M-1\}$ is invariant under the restriction of $t$ to the subset of $X\times Y$,
\[(X\setminus X(\underbrace{1,\ldots, 1}_{j+1}))\times Y,\]
that is, for any element $\lambda$ of this subset, we have $\pi(t\lambda)_j=\pi(\lambda)_j$.
\item For a.e.\ $\lambda \in X\times Y$, we have $\pi(\Gamma \lambda)\subset \calr_0\pi(\lambda)$.
\end{enumerate}
\end{lem}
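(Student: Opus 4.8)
The plan is to verify the three assertions directly from the explicit formula defining $\pi$, treating them in order of increasing subtlety. For assertion (i), I would observe that the formula
\[
\pi(\lambda)_j=\sum_{k=0}^{M-1}\theta_{jM+k}(t^{-x_0-2x_1-\cdots -2^jx_j}\lambda)q^k
\]
expresses the $j$th coordinate in $S$ as a base-$q$ block of length $M$ of the $Y$-coordinates of a shifted version of $\lambda$. Since the shift applied in the $j$th block, namely $t^{-(x_0+2x_1+\cdots+2^jx_j)}$, depends only on $x_0,\dots,x_j$, I would fix $x=(x_n)_n\in X$ and argue that, conditionally on $x$, the map $\lambda\mapsto(\pi(\lambda)_j)_{j\in\N}$ simply reindexes the i.i.d.\ uniform coordinates $(y_m)_{m\in\Z}$ of $Y$: the coordinates $y_{jM+k+(x_0+2x_1+\cdots+2^jx_j)}$ appearing across all $j,k$ are pairwise distinct (this needs a short check that the windows $[jM+(x_0+\cdots+2^jx_j),\,jM+M-1+(x_0+\cdots+2^jx_j)]$ are disjoint as $j$ varies, which follows since $x_0+2x_1+\cdots+2^jx_j<2^{j+1}\le \dots$ — more simply, consecutive shifts increase by $2^{j+1}x_{j+1}$ while the window advances by $M$, and one arranges $M$ and the indexing so the windows tile or at least stay disjoint; I would phrase the index set explicitly and note injectivity of $j,k\mapsto$ the relevant index). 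Then each $\pi(\lambda)_j$ is a sum $\sum_{k}y_{(\cdot)}q^k$ of independent uniform digits in $\{0,\dots,q-1\}$ with distinct indices, hence uniform in $\{0,\dots,q^M-1\}$, and the $\pi(\lambda)_j$ are independent; integrating over $x$ preserves this, giving $\pi_*(\mu\times\nu)=\sigma$.

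For assertion (ii), I would use that on the set $(X\setminus X(\underbrace{1,\dots,1}_{j+1}))\times Y$ the odometer action of $t$ on $X$ does not carry any digit past position $j$: if $\lambda=((x_n)_n,(y_m)_m)$ with $(x_0,\dots,x_j)\ne(1,\dots,1)$, then $t\lambda=((x_n')_n,(ty)_{\cdot})$ where $x_n'=x_n$ for $n>$ (the first index with $x_n=0$), and in particular the quantity $x_0+2x_1+\cdots+2^jx_j$ changes by exactly $1$ under $t$. Meanwhile $t$ acts on $Y$ by the right shift, $ty=(y_{m-1})_m$. Plugging into the formula, $\pi(t\lambda)_j=\sum_k\theta_{jM+k}\big(t^{-(x_0+\cdots+2^jx_j)-1}\,t\lambda\big)q^k=\sum_k\theta_{jM+k}\big(t^{-(x_0+\cdots+2^jx_j)}\lambda\big)q^k=\pi(\lambda)_j$, since the extra $t^{-1}$ cancels the shift $t$ applied to the $Y$-component and the $X$-component of $t\lambda$ enters only through $x_0+\cdots+2^jx_j$, which increased by $1$. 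I would write this cancellation out carefully as it is the crux of the invariance statement.

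For assertion (iii) I must show $\pi(\Gamma\lambda)\subset\calr_0\pi(\lambda)$ for a.e.\ $\lambda$, i.e.\ that $\pi(\gamma\lambda)$ and $\pi(\lambda)$ agree in all but finitely many $S$-coordinates, for every $\gamma\in\Gamma$. Since $\Gamma$ is generated by $a$ and $t$, it suffices to treat these two generators (and then iterate: a composition of finitely many coordinate-changes is again a finite change). The generator $a$ acts trivially on $X$ and acts on $Y$ through $\epsilon(a)=1\in G_0$, i.e.\ by the element $a_0$ which (by the construction in Section~\ref{sec-sol}) moves only coordinates in a bounded range near position $0$ of $Y$; hence for all large $j$ the window contributing to $\pi(\cdot)_j$ is untouched, so $\pi(a\lambda)_j=\pi(\lambda)_j$ for $j$ large. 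The generator $t$ is the main obstacle: by assertion (ii), $\pi(t\lambda)_j=\pi(\lambda)_j$ whenever $(x_0,\dots,x_j)\ne(1,\dots,1)$, and for a.e.\ $\lambda$ we have $(x_0,\dots,x_j)\ne(1,\dots,1)$ for all sufficiently large $j$ (the set $X(1,\dots,1)$ with $j+1$ ones has measure $2^{-(j+1)}$, so by Borel–Cantelli only finitely many $j$ land in it). Therefore $\pi(t\lambda)$ differs from $\pi(\lambda)$ in only finitely many coordinates for a.e.\ $\lambda$. Combining the two generators and closing under composition (using that $\calr_0$ is an equivalence relation, so finitely many finite changes compose to a finite change, and that this holds simultaneously on a conull set since $\Gamma$ is countable) yields assertion (iii). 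The delicate points I expect to spend the most care on are the disjointness-of-windows bookkeeping in (i) and the precise accounting of which coordinates move under $a_0$ in the $p=1$ case of Section~\ref{sec-sol} for (iii).
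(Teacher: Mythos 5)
Your proposal is correct and follows essentially the same route as the paper: assertion (i) via the observation that, for fixed $x$, the $Y$-windows contributing to the various $\pi(\cdot)_j$ are pairwise disjoint (the paper organizes this as a partition of $X$ into the cylinders $t^l\Omega$ rather than by conditioning, but the disjointness inequality is the same); assertion (ii) by the cancellation of the extra shift against the unit increase of $x_0+2x_1+\cdots+2^jx_j$; and assertion (iii) by reducing to the generators $a$ and $t$, using (ii) together with the fact that a.e.\ $x$ eventually has a zero digit, and that the $a$-orbit relation on $Y$ is the odometer relation.
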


\begin{proof}
Pick $d\in \N$ and $h_0, h_1,\ldots, h_d\in \{ 0, 1,\ldots, q^M-1\}$, and set
\[T=\{\, (c_i)_{i\in \N}\in S \mid c_0=h_0,\ c_1=h_1,\ldots, c_d=h_d\,\}.\]
To prove assertion (i), it suffices to show that $(\mu \times \nu)(\pi^{-1}(T))=q^{-(d+1)M}$.
We set
\[\Omega =X(\underbrace{0,\ldots, 0}_{d+1})\quad \textrm{and}\quad A_j=\left\{ \, (y_n)_{n\in \Z}\in Y\Biggm| \sum_{k=0}^{M-1}y_{jM+k}q^k=h_j\,\right\}\]
for each $j\in \{ 0, 1,\ldots, d\}$.
For each integer $l$ with $0\leq l\leq 2^{d+1}-1$, we denote by $l_0, l_1,\ldots, l_d\in \{ 0, 1\}$ as the numbers determined by the equation $l=l_0+2l_1+\cdots +2^dl_d$.
We then have the equality
\[\pi^{-1}(T)=\bigsqcup_{l=0}^{2^{d+1}-1}(t^l\Omega)\times \left( \bigcap_{j=0}^d t^{l_0+2l_1+\cdots +2^jl_j}A_j\right).\]
Fix an integer $l$ with $0\leq l\leq 2^{d+1}-1$.
For any $j\in \{ 0, 1,\ldots, d\}$, we have
\[t^{l_0+2l_1+\cdots +2^jl_j}A_j=\left\{ \, (y_n)_{n\in \Z}\in Y\Biggm| \sum_{k=0}^{M-1}y_{jM+k+l_0+2l_1+\cdots +2^jl_j}q^k=h_j\,\right\}.\]
The inequality
\[jM+M-1+l_0+2l_1+\cdots +2^jl_j<(j+1)M+l_0+2l_1+\cdots +2^{j+1}l_{j+1}\]
for any $j\in \{ 0, 1,\ldots, d-1\}$ implies that
\[\nu \left( \bigcap_{j=0}^d t^{l_0+2l_1+\cdots +2^jl_j}A_j\right)=q^{-(d+1)M}.\]
It follows that $(\mu \times \nu)(\pi^{-1}(T))=q^{-(d+1)M}$.
Assertion (i) is proved.

Assertion (ii) follows from the definition of $\pi(\cdot)_j$.
Assertion (ii) implies that for a.e.\ $\lambda \in X\times Y$, the element $\pi(t\lambda)$ belongs to $\calr_0\pi(\lambda)$.
The action of $H$ on $(Y, \nu)$ generates the odometer relation on $(Y, \nu)$ so that two elements $(y_n)_n, (y_n')_n\in Y$ are equivalent if and only if for all but finitely many $n\in \Z$, we have $y_n=y_n'$.
The action of $H$ on $(X, \mu)$ is trivial.
By the definition of $\pi$, for a.e.\ $\lambda \in X\times Y$, we have $\pi(H\lambda)\subset \calr_0\pi(\lambda)$.
Assertion (iii) is proved.
\end{proof}

For $j\in \N$, we set
\[C_j=\{\, (c_i)_{i\in \N}\in S\mid c_j=0\,\}\in \calb_S,\quad B_j=\pi^{-1}(C_j)\times Z\in \calb_W.\]
For any $j\in \N$, we then have $\omega(B_j)=\sigma(C_j)=q^{-M}$.
The sequence $\{ C_j\}_{j\in \N}$ is an a.i.\ sequence for the odometer relation $\calr_0$.
It follows from Lemma \ref{lem-ai} and Lemma \ref{lem-1-pi} (i), (iii) that $\{ B_j\}_{j\in \N}$ is an a.i.\ sequence for the action $\Gamma \c (W, \omega)$.
We define $U_j\in [\calr(\Gamma \c W)]$ so that for any $l_0, l_1,\ldots, l_j\in \{ 0, 1\}$, we have
\[U_j=t^{l_0+2l_1+\cdots +2^jl_j}a^{rq^{jM}}t^{-l_0-2l_1-\cdots -2^jl_j}=a^{rq^{jM+l_0+2l_1+\cdots +2^jl_j}}\]
on the subset $X(l_0, l_1,\ldots, l_j)\times Y\times Z$ of $W$.
Since $a$ acts on $X$ trivially, the map $U_j$ is indeed an element of the full group $[\calr(\Gamma \c W)]$.

\begin{lem}\label{lem-1-ac}
In the above notation, the following assertions hold:
\begin{enumerate}
\item For any $j\in \N$, we have $U_ja=aU_j$ on $W$, and $U_jt=tU_j$ on the subset of $W$,
\[(X\setminus X(\underbrace{1,\ldots, 1}_{j+1}))\times Y\times Z.\]
\item For any $j\in \N$, we have $U_jB_j\cap B_j=\emptyset$.
\end{enumerate}
\end{lem}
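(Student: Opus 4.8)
The plan is to prove each assertion by a direct computation with the explicit formulas defining $U_j$, $a$, $t$, and the sets $B_j$, exploiting the fact that the actions of $a$ and $t$ on $X$ are the trivial action and the odometer, respectively.

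First, for assertion (i), I would fix $j\in\N$ and work coordinate-wise on the pieces $X(l_0,\ldots,l_j)\times Y\times Z$ of the partition of $W$. On such a piece $U_j$ equals $a^{rq^{jM+m}}$ where $m=l_0+2l_1+\cdots+2^jl_j$, and since $a$ acts trivially on $X$, the automorphism $U_j$ leaves this piece invariant; commuting $U_j$ with $a$ then reduces to the identity $a^{rq^{jM+m}}a=aa^{rq^{jM+m}}$ in $\Gamma$, which is immediate. For the relation $U_jt=tU_j$ on $(X\setminus X(\underbrace{1,\ldots,1}_{j+1}))\times Y\times Z$, the key observation is that if $(x_n)_n\in X$ is not in $X(1,\ldots,1)$ (first $j+1$ coordinates all $1$), then the odometer $t$ does not carry past the $j$th coordinate, so $t$ preserves each piece $X(l_0,\ldots,l_j)$ on this subset and acts on $x$ only by changing coordinates with index $\le j$ in a way that fixes the value of $m=l_0+2l_1+\cdots+2^jl_j$ only after we track how the digits change — more precisely, on the subset $X(l_0,\ldots,l_j)\setminus X(1,\ldots,1)$ the map $t$ sends this set into $X(l_0',\ldots,l_j')$ with $l_0'+2l_1'+\cdots+2^jl_j'=m+1$ when there is a carry, so I must check that $U_j$ before and after $t$ compose correctly: $t$ on $X(l_0,\ldots,l_j)$ followed by $a^{rq^{jM+m+1}}$ should equal $a^{rq^{jM+m}}$ followed by $t$, and since $a$ and $t$ act independently on the $X$, $Y$, $Z$ factors (with $a$ trivial on $X$, $t$ via $\epsilon$ on $Y$ and co-induced on $Z$), this reduces to the relation $t a^{rq^{jM+m}} = a^{rq^{jM+m+1}} t$ in $\Gamma$ (using $ta^{rp^Kq^L}t^{-1}=a^{rp^Kq^{L+1}}$ with $p=1$), which holds.

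For assertion (ii), I would compute $U_jB_j$ explicitly. Recall $B_j=\pi^{-1}(C_j)\times Z$ where $C_j=\{(c_i)_i\in S\mid c_j=0\}$ and $\pi(\lambda)_j=\sum_{k=0}^{M-1}y_{jM+k+m}q^k$ with $m=x_0+2x_1+\cdots+2^jx_j$. On a piece $X(l_0,\ldots,l_j)\times Y\times Z$ the automorphism $U_j=a^{rq^{jM+m}}$ acts trivially on $X$ and $Z$ (the latter by Lemma~\ref{lem-co-ind}, though here I only need that it doesn't affect the $X$-coordinate) and acts on $Y$ through $\epsilon$, i.e.\ by the odometer $t^m a^{rq^{jM}} t^{-m}$ composed appropriately; the point is that adding $rq^{jM}$ to $Y$ at the shifted position changes $\pi(\lambda)_j$ from $0$ to a nonzero value because $q^M>r$ guarantees $rq^{jM}<q^{jM+M}=q^{(j+1)M}$, so the addition $a^{rq^{jM}}$ on $Y$ (after the appropriate shift) perturbs exactly the block of coordinates $y_{jM+m},\ldots,y_{jM+M-1+m}$ encoding $\pi(\lambda)_j$, turning the digit $0$ into the digit $r\ne 0$ (mod $q^M$), hence $\pi(U_j\lambda)_j=r\ne 0$ so $U_j\lambda\notin B_j$. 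I expect the main obstacle to be bookkeeping the carries: one must verify that adding $rq^{jM}$ at the correct shifted position in $Y$ does not carry out of the length-$M$ block governing $\pi(\lambda)_j$ — this is exactly where $q^M>r$ is used, since starting from digit value $0$ in the base-$q^M$ expansion, adding $r<q^M$ produces digit $r$ with no carry. Once this is nailed down, $U_jB_j\cap B_j=\emptyset$ follows immediately, and combined with $\omega(B_j)=q^{-M}$ being bounded away from $0$ and $1$, this shows $\{B_j\}$ is a non-trivial a.i.\ sequence and $\{U_j\}$ an a.c.\ sequence moving it, yielding stability via Theorem~\ref{thm-js}.
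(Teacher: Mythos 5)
Your proof is correct and follows essentially the same route as the paper: assertion (i) reduces to the group relation $ta^{rq^{jM+m}}t^{-1}=a^{rq^{jM+m+1}}$ combined with the observation that the odometer $t$ increments the digit sum $l_0+2l_1+\cdots+2^jl_j$ by one off the all-ones cylinder, and assertion (ii) is the same carry-free digit computation the paper performs, with $q^M>r$ guaranteeing that the block of $M$ zero digits of $Y$ governing $\pi(\lambda)_j$ becomes the base-$q$ expansion of $r\neq 0$ with no carry out of the block. (One small inaccuracy: $U_j$ does \emph{not} act trivially on $Z$ — what you actually need, and do use, is only that $B_j$ has the form $(\cdot)\times Z$, so membership in $B_j$ is insensitive to the $Z$-coordinate.)
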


\begin{proof}
Assertion (i) follows from the definition of $U_j$.
For any $j\in \N$, we have
\[B_j=\bigsqcup_{l=0}^{2^{j+1}-1}\left\{ \, ((x_n)_n, (y_m)_m, z)\in W \Biggm| \sum_{n=0}^j2^nx_n=l,\ \sum_{k=0}^{M-1}y_{jM+k+l}q^k=0\, \right\}.\]
The equation $\sum_{k=0}^{M-1}y_{jM+k+l}q^k=0$ is equivalent to the condition that $y_{jM+k+l}=0$ for any $k\in \{ 0, 1,\ldots, M-1\}$.
Since we have chosen the number $M$ so that $q^M>r$ in the beginning of this subsection, for any $j\in \N$, we have
\[U_jB_j=\bigsqcup_{l=0}^{2^{j+1}-1}\left\{ \, ((x_n)_n, (y_m)_m, z)\in W \Biggm| \sum_{n=0}^j2^nx_n=l,\ \sum_{k=0}^{M-1}y_{jM+k+l}q^k=r\, \right\}.\]
Assertion (ii) follows.
\end{proof}

\begin{thm}\label{thm-p-1}
In the above notation, the sequence $\{ U_j\}_{j\in \N}$ is a non-trivial a.c.\ sequence for the action $\Gamma \c (W, \omega)$.
The action $\Gamma \c (W, \omega)$ is therefore stable.
\end{thm}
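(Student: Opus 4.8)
The plan is to verify the two conditions in the definition of an a.c.\ sequence (namely conditions (a) and (c) from the preliminaries), and then to verify non-triviality using the a.i.\ sequence $\{B_j\}_{j\in\N}$ already at hand. For condition (c), the group $\Gamma$ is generated by $a$ and $t$, so it suffices to check $\lim_j\omega(\{w\in W\mid U_jaw\neq aU_jw\})=0$ and $\lim_j\omega(\{w\in W\mid U_jtw\neq tU_jw\})=0$. By Lemma \ref{lem-1-ac}(i), $U_ja=aU_j$ holds everywhere on $W$, so the first of these is immediate, and $U_jt=tU_j$ holds off the set $X(\underbrace{1,\ldots,1}_{j+1})\times Y\times Z$, which has measure $2^{-(j+1)}\to 0$; this disposes of condition (c). For condition (a), I need $\lim_j\omega(U_jB\bigtriangleup B)=0$ for every Borel $B\subset W$. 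On each piece $X(l_0,\ldots,l_j)\times Y\times Z$ the automorphism $U_j$ acts as $a^{rq^{jM+l}}$ with $l=l_0+2l_1+\cdots+2^jl_j\le 2^{j+1}-1$, and this is a power of $a$ of the form $a^{rn_j(pq)^{jM}}$ (here $p=1$, so $pq=q$) with $n_j=q^l$ a positive integer depending only on the $X$-coordinate. Lemma \ref{lem-co-ind}(ii) gives exactly the uniform-in-$\{n_k\}$ convergence $\xi(a^{rn_j q^{jM}}B_Z\bigtriangleup B_Z)\to 0$ for Borel $B_Z\subset Z$; since $a$ acts trivially on $X$ and on $Y$ it acts trivially (through $\epsilon$, the image of $a$ in $G_0$ being the unit, hence acting on $Y$ as the relevant odometer—one must be slightly careful here and instead note that $a$ acts on $Y$ through $\epsilon$, so I will approximate $B$ by finite unions of rectangles and reduce to the $Z$-factor). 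More precisely, I would first approximate $B$ by a set of the form $B'=\bigcup_{l_0,\ldots,l_j}X(l_0,\ldots,l_j)\times D_{l_0,\ldots,l_j}$ depending only on the first $j+1$ coordinates of $X$ and on $Y\times Z$; for $j$ large this is possible with $\omega(B\bigtriangleup B')$ small, and on $B'$ the map $U_j$ restricts to an odometer-type power of $a$, so Lemma \ref{lem-co-ind}(ii) applies uniformly.

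Having established (a) and (c), the sequence $\{U_j\}_{j\in\N}$ is a.c.\ for the action $\Gamma\c(W,\omega)$. To show it is non-trivial I use the a.i.\ sequence $\{B_j\}_{j\in\N}$ constructed just before Lemma \ref{lem-1-ac}: it was checked there, via Lemma \ref{lem-ai} and Lemma \ref{lem-1-pi}, that $\{B_j\}$ is a.i.\ for $\Gamma\c(W,\omega)$, and $\omega(B_j)=q^{-M}$ is bounded away from $0$ and $1$. Lemma \ref{lem-1-ac}(ii) gives $U_jB_j\cap B_j=\emptyset$, hence $\omega(U_jB_j\bigtriangleup B_j)=\omega(U_jB_j)+\omega(B_j)=2q^{-M}$, which does not tend to $0$. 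By the definition of triviality of an a.c.\ sequence, $\{U_j\}$ is non-trivial. Then Theorem \ref{thm-js} (Jones--Schmidt) immediately gives that $\calr(\Gamma\c W)$ is stable, i.e.\ the action $\Gamma\c(W,\omega)$ is stable.

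The main obstacle I anticipate is the verification of condition (a): one must be careful that $U_j$ is not a single group element but a ``block'' automorphism that equals $a^{rq^{jM+l}}$ on the cylinder $X(l_0,\ldots,l_j)\times Y\times Z$ with $l$ varying, and the exponents $rq^{jM+l}$ are \emph{not} of the simple form $rn_j(pq)^{jM}$ with a \emph{fixed} $n_j$—rather $n_j=q^l$ varies over the $2^{j+1}$ cylinders. This is exactly why Lemma \ref{lem-co-ind}(ii) was stated with the uniformity clause ``uniform with respect to the sequence $\{n_k\}_{k\in\N}$'': it lets me bound $\xi(a^{rn_j(pq)^{jM}}B_Z\bigtriangleup B_Z)<\varepsilon$ simultaneously over all choices of $n_j$, so summing over the finitely many cylinders still gives a bound $\to 0$. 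The remaining routine points—reducing an arbitrary Borel $B$ to a finite-cylinder approximation, and bookkeeping the measure of the exceptional set $X(1,\ldots,1)\times Y\times Z$—are standard and I would only sketch them.

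\begin{proof}
We verify that $\{U_j\}_{j\in\N}$ satisfies conditions (a) and (c) in the definition of an a.c.\ sequence for the action $\Gamma\c(W,\omega)$, and that it is non-trivial.

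Condition (c) for the generator $a$: by Lemma \ref{lem-1-ac}(i) we have $U_ja=aU_j$ on all of $W$, so $\omega(\{w\in W\mid U_jaw\neq aU_jw\})=0$ for every $j$. Condition (c) for the generator $t$: again by Lemma \ref{lem-1-ac}(i), $U_jt=tU_j$ holds on $(X\setminus X(\underbrace{1,\ldots,1}_{j+1}))\times Y\times Z$, whose complement in $W$ has measure $\mu(X(\underbrace{1,\ldots,1}_{j+1}))=2^{-(j+1)}$. Hence $\omega(\{w\in W\mid U_jtw\neq tU_jw\})\leq 2^{-(j+1)}\to 0$. Since $\Gamma$ is generated by $a$ and $t$, condition (c) holds.

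Condition (a): fix a Borel subset $B$ of $W$ and $\varepsilon>0$. Choose $d\in\N$ and a Borel set $B'\subset W$ depending only on the first $d+1$ coordinates of the $X$-factor and on $Y\times Z$, with $\omega(B\bigtriangleup B')<\varepsilon/3$; such $B'$ exists because cylinder sets of this form generate $\calb_W$. Write $B'=\bigsqcup_{l=0}^{2^{d+1}-1}\bigl(t^l X(\underbrace{0,\ldots,0}_{d+1})\bigr)\times D_l$ with $D_l\in\calb_{Y\times Z}$, so that for $j\geq d$ the automorphism $U_j$ restricted to $t^lX(0,\ldots,0)$ equals $a^{rq^{jM+l}}=a^{rq^{l}q^{jM}}$, a power of $a$ of the form $a^{rn(pq)^{jM}}$ (recall $p=1$) with $n=q^l$ a positive integer. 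Since $a$ acts trivially on $X$, it suffices to bound $(\nu\times\xi)\bigl(a^{rq^{l}q^{jM}}D_l\bigtriangleup D_l\bigr)$; writing $D_l$ as an approximate union of rectangles in $Y\times Z$ and using that $a$ acts on $Y$ through the unit of $G_0$, it further suffices to bound $\xi\bigl(a^{rn(pq)^{jM}}E\bigtriangleup E\bigr)$ for Borel $E\subset Z$. By Lemma \ref{lem-co-ind}(ii), applied to the subsequence indexed by $jM$, there is $J\geq d$ so that for all $j\geq J$ and all choices of positive integer $n$ this quantity is less than $\varepsilon/(3\cdot 2^{d+1})$; summing over $l$ gives $\omega(U_jB'\bigtriangleup B')<\varepsilon/3$. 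Since $U_j$ preserves $\omega$, $\omega(U_jB\bigtriangleup B)\leq 2\omega(B\bigtriangleup B')+\omega(U_jB'\bigtriangleup B')<\varepsilon$ for $j\geq J$. Hence condition (a) holds, and $\{U_j\}_{j\in\N}$ is an a.c.\ sequence.

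Non-triviality: the sequence $\{B_j\}_{j\in\N}$ defined before Lemma \ref{lem-1-ac} is, by Lemma \ref{lem-ai} together with Lemma \ref{lem-1-pi}(i),(iii), an a.i.\ sequence for the action $\Gamma\c(W,\omega)$, and $\omega(B_j)=q^{-M}$. By Lemma \ref{lem-1-ac}(ii) we have $U_jB_j\cap B_j=\emptyset$, so $\omega(U_jB_j\bigtriangleup B_j)=\omega(U_jB_j)+\omega(B_j)=2q^{-M}$, which does not converge to $0$. Therefore $\{U_j\}_{j\in\N}$ is a non-trivial a.c.\ sequence. By Theorem \ref{thm-js}, the equivalence relation $\calr(\Gamma\c W)$ is stable, that is, the action $\Gamma\c(W,\omega)$ is stable.
\end{proof}
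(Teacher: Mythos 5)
Your proof follows the paper's argument almost step for step: the same three conditions (commutation with the generators via Lemma \ref{lem-1-ac}~(i), asymptotic invariance of $U_j$ on Borel sets via Lemma \ref{lem-co-ind}~(ii), and non-triviality witnessed by the a.i.\ sequence $\{B_j\}$ with $U_jB_j\cap B_j=\emptyset$ from Lemma \ref{lem-1-ac}~(ii)), and the conclusion via Theorem \ref{thm-js}. The parts on condition (c) and on non-triviality are correct and identical to the paper's.

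The one place where your write-up goes wrong is the $Y$-factor in condition (a). You assert that because the image of $a$ in $G_0$ is the unit, ``it further suffices to bound $\xi(a^{rn(pq)^{jM}}E\bigtriangleup E)$ for Borel $E\subset Z$,'' i.e.\ you reduce to the $Z$-factor alone. But the unit of $G_0=\Z[1/q]$ does \emph{not} act trivially on $Y$: in the case $p=1$ it acts as the $q$-adic odometer on $\prod_{\Z}\{0,\dots,q-1\}$, so the sets $D_l\subset Y\times Z$ are genuinely moved in the $Y$-coordinate as well. What rescues the argument — and what the paper states explicitly — is that the exponents occurring in $U_j$ are all divisible by $q^{jM}$, and for the $q$-adic odometer one has $\lim_k\nu(a^{n_kq^k}A\bigtriangleup A)=0$ for every Borel $A\subset Y$, uniformly in the sequence of positive integers $\{n_k\}$ (adding a multiple of $q^k$ leaves all digits below position $k$ unchanged). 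This is the exact analogue on $Y$ of the uniformity clause of Lemma \ref{lem-co-ind}~(ii) on $Z$, and both facts must be combined. With that one correction your proof coincides with the paper's.
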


\begin{proof}
It suffices to check the following three conditions:
\begin{enumerate}
\item[(1)] For any $B\in \calb_W$, we have $\lim_j\omega(U_jB\bigtriangleup B)=0$.
\item[(2)] For any $g\in \Gamma$, we have $\lim_j\omega(\{\, w\in W\mid U_jgw\neq gU_jw\,\})=0$.
\item[(3)] The sequence $\{ B_j\}_{j\in \N}$ in $\calb_W$ is an a.i.\ sequence for the action $\Gamma \c (W, \omega)$, and we have $\omega(U_jB_j\bigtriangleup B_j)=2q^{-M}$ for any $j\in \N$.
\end{enumerate}
The element $a$ of $\Gamma$ acts on $(X, \mu)$ trivially.
Since $a$ acts on $(Y, \nu)$ by the $q$-adic odometer, for any sequence $\{ n_k\}_{k\in \N}$ of positive integers and any $A\in \calb_Y$, we have
\[\lim_{k\to \infty}\nu(a^{n_kq^k}A\bigtriangleup A)=0.\]
The convergence is uniform with respect to the sequence $\{ n_k\}_{k\in \N}$.
Combining this with Lemma \ref{lem-co-ind} (ii), we obtain condition (1).
Condition (2) follows from Lemma \ref{lem-1-ac} (i).
The former assertion in condition (3) has already been checked right after the proof of Lemma \ref{lem-1-pi}.
The latter assertion follows from Lemma \ref{lem-1-ac} (ii).
\end{proof}


\subsection{The case of $p>1$}\label{subsec-s-2}

We assume $p>1$.
Fix a positive integer $M$ with $(pq)^M>r$.
We set
\[S=\prod_{\N}\{ 0, 1,\ldots, (pq)^M-1\}.\]
We have constructed the action of $\Gamma$ on the standard probability space
\[(Y, \nu)=(Y_-, \nu_-)\times (Y_0, \nu_0)\times (Y_+, \nu_+),\]
where
\[Y_-=\prod_{\Z_-}\{ 0, 1,\ldots, q-1\},\quad Y_0=\prod_{\N}\{ 0, 1,\ldots, pq-1\},\quad Y_+=\prod_{\Z_+}\{ 0, 1,\ldots, p-1\}.\]
Let $\Gamma$ act on $X\times Y$ diagonally.
For each $j\in \N$, let $\theta_j\colon X\times Y\to \{ 0, 1,\ldots, pq-1\}$ be the projection onto the $j$th coordinate of $Y_0$.

We define a Borel map $\pi \colon X\times Y\to S$ as follows.
Pick an element
\[\lambda =((x_n)_{n\in \N}, y)\in X\times Y.\]
For each $j\in \N$, the $j$th coordinate of $\pi(\lambda)\in S$, denoted by $\pi(\lambda)_j$, is defined by
\[\pi(\lambda)_j=\sum_{k=0}^{M-1}\theta_{d_j+k}(t^{-x_0-2x_1-\cdots -2^jx_j}\lambda)(pq)^k,\]
where we put $d_j=j+2^{j+1}$.
For each $j\in \N$, we set
\[C_j=\{\, (c_i)_{i\in \N}\in S\mid c_j=0\,\}\in \calb_S,\quad B_j=\pi^{-1}(C_j)\times Z\in \calb_W.\]

\begin{lem}\label{lem-2-ai}
In the above notation, the following assertions hold:
\begin{enumerate}
\item For any $j\in \N$, we have $\omega(B_j)=(pq)^{-M}$.
\item The sequence $\{ B_j\}_{j\in \N}$ in $\calb_W$ is an a.i.\ sequence for the action $\Gamma \c (W, \omega)$.
\end{enumerate}
\end{lem}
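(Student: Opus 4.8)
The plan is to follow the case $p=1$ closely: first establish, for the Borel map $\pi\colon X\times Y\to S$, the exact analogues of the three assertions of Lemma \ref{lem-1-pi}, and then read off (i) and (ii) from them. Equip $S$ with the product probability measure $\sigma$ of the uniform distributions on $\{0,1,\ldots,(pq)^M-1\}$ and let $\calr_0$ be the associated odometer relation on $(S,\sigma)$. Write $\lambda=((x_n)_{n\in\N},y)\in X\times Y$ and $m_j(\lambda)=x_0+2x_1+\cdots+2^jx_j$, so that $\pi(\lambda)_j=\sum_{k=0}^{M-1}\theta_{d_j+k}(t^{-m_j(\lambda)}\lambda)(pq)^k$ with $d_j=j+2^{j+1}$. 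The three facts to prove are: (a) $\pi_*(\mu\times\nu)=\sigma$; (b) for each $j$, $\pi(t\lambda)_j=\pi(\lambda)_j$ whenever $x\notin X(\underbrace{1,\ldots,1}_{j+1})$; and (c) for a.e.\ $\lambda\in X\times Y$ one has $\pi(\Gamma\lambda)\subset\calr_0\pi(\lambda)$. Granting these, assertion (i) is immediate, since $\omega(B_j)=(\mu\times\nu\times\xi)(\pi^{-1}(C_j)\times Z)=(\mu\times\nu)(\pi^{-1}(C_j))=\sigma(C_j)=(pq)^{-M}$. For assertion (ii), consider the projection $\tilde\pi\colon W\to S$ defined by $\tilde\pi(x,y,z)=\pi(x,y)$: by (a) we have $\tilde\pi_*\omega=\sigma$, and by (c) we have $\tilde\pi(\calr(\Gamma\c W)w)\subset\calr_0\tilde\pi(w)$ for a.e.\ $w\in W$. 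Since $\{C_j\}_{j\in\N}$ is an a.i.\ sequence for the odometer relation $\calr_0$, Lemma \ref{lem-ai} applied to $\tilde\pi$ shows that $\{\tilde\pi^{-1}(C_j)\}_{j\in\N}=\{B_j\}_{j\in\N}$ is an a.i.\ sequence for the action $\Gamma\c(W,\omega)$, which is assertion (ii).

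For (a) I would repeat the computation in the proof of Lemma \ref{lem-1-pi}(i). Fixing $h_0,\ldots,h_d\in\{0,\ldots,(pq)^M-1\}$ and $T=\{(c_i)_i\in S\mid c_i=h_i\text{ for }0\le i\le d\}$, one decomposes $\pi^{-1}(T)$ over the cylinders $X(l_0,\ldots,l_d)$; on such a cylinder the numbers $m_j=l_0+2l_1+\cdots+2^jl_j$ ($0\le j\le d$) are constant, so the condition $\pi(\lambda)\in T$ becomes a condition prescribing $M$ consecutive $Y_0$-coordinates of each $t^{-m_j}\lambda$. Making the map $t^{-m}$ on $Y_-\times Y_0\times Y_+$ explicit (this is where the carry maps $\eta$ and $\zeta$ enter), these translate back to $M$ consecutive original coordinates of $Y_0$ located around index $d_j+m_j$, and the choice $d_j=j+2^{j+1}$ together with the bound $m_j\le 2^{j+1}-1$ is precisely what forces, for $0\le j\le d-1$, the block used for index $j$ to lie strictly below the block used for index $j+1$. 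Disjointness of these blocks makes the corresponding events $\nu$-independent and yields $(\mu\times\nu)(\pi^{-1}(T))=(pq)^{-(d+1)M}$, hence (a). Fact (b) is immediate from the definition of $\pi$: if $x\notin X(\underbrace{1,\ldots,1}_{j+1})$, then $tx$ differs from $x$ only in the first $j+1$ coordinates and $m_j(t\lambda)=m_j(\lambda)+1$, so $t^{-m_j(t\lambda)}(t\lambda)=t^{-m_j(\lambda)}\lambda$ and therefore $\pi(t\lambda)_j=\pi(\lambda)_j$.

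The substance lies in (c). The set of $g\in\Gamma$ for which $\pi(g\lambda)\in\calr_0\pi(\lambda)$ holds for a.e.\ $\lambda$ is a subgroup of $\Gamma$ — it is closed under products and inverses because the action preserves the measure and $\calr_0$ is an equivalence relation — so it is enough to treat the two generators $g=t$ and $g=a$. For $g=t$: by (b), $\pi(t\lambda)_j=\pi(\lambda)_j$ for every $j$ with $x\notin X(\underbrace{1,\ldots,1}_{j+1})$, and for a.e.\ $x$ this holds for all but finitely many $j$; hence $\pi(t\lambda)$ and $\pi(\lambda)$ differ in only finitely many coordinates, i.e.\ $\pi(t\lambda)\in\calr_0\pi(\lambda)$. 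For $g=a$: the element $a$ acts trivially on $X$ and on $Y$ through the element $a_0$ of $G_0$, so $m_j(a\lambda)=m_j(\lambda)$ and $\pi(a\lambda)_j$ is determined by the coordinates of $\theta(t^{-m_j(\lambda)}(a_0y))$, where $y$ is the $Y$-component of $\lambda$ and $\theta\colon Y\to Y_0$ is the projection. Lemma \ref{lem-theta} applied with $s=a_0$ gives, for a.e.\ $y$, a number $K=K(a_0,y)$ with $\theta(t^{-k}a_0y)=\theta(t^{-k}y)$ for all $k\ge K$; since $m_j(\lambda)=x_0+2x_1+\cdots+2^jx_j$ is non-decreasing in $j$ and tends to infinity for a.e.\ $x$, we get $m_j(\lambda)\ge K$ for all large $j$, whence $\theta(t^{-m_j(\lambda)}(a_0y))=\theta(t^{-m_j(\lambda)}y)$ and so $\pi(a\lambda)_j=\pi(\lambda)_j$ for all large $j$. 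Thus $\pi(a\lambda)\in\calr_0\pi(\lambda)$, which proves (c).

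I expect the main obstacle to be (a). In contrast to the case $p=1$, where $t$ acts on $Y$ by a plain shift, here $t^{-1}$ mixes all three blocks $Y_-$, $Y_0$, $Y_+$ via the carry maps $\eta$ and $\zeta$, so identifying exactly which original $Y_0$-coordinates govern $\pi(\lambda)_j$ — and checking, uniformly over all $j\le d$, that the relevant blocks are pairwise disjoint — takes some care; the index $d_j=j+2^{j+1}$ is engineered to outrun the shifts $m_j\le 2^{j+1}-1$, and this disjointness bookkeeping is the one genuinely computational ingredient. Everything else, including (b), (c) and the two deductions, is formal once Lemma \ref{lem-theta} and Lemma \ref{lem-ai} are in hand.
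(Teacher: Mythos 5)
There is a genuine gap, and it sits exactly where you predicted it would: assertion (a), the identity $\pi_*(\mu\times\nu)=\sigma$. Your proposed proof of (a) asserts that, on a fixed cylinder of $X$, the condition prescribing the coordinates $d_j,\ldots,d_j+M-1$ of the $Y_0$-component of $t^{-m_j(\lambda)}\lambda$ ``translates back to $M$ consecutive original coordinates of $Y_0$ located around index $d_j+m_j$'', and then invokes disjointness of these blocks to obtain independence over $j$. That translation is valid only when $p=1$, where $t$ is a plain coordinate shift. For $p>1$ the map $t^{-1}$ sends the $Y_0$-component $y_0$ to $\frac{p}{q}(y_0-\zeta(y_0))+y_1$: it strips off the residue of $y_0$ modulo $q$, divides by $q$, multiplies by $p$, and feeds in a $Y_+$-coordinate. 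Multiplication by $p$ and division by $q$ in $E_{pq}$ do not permute base-$pq$ digits --- carries propagate through all of them --- so the event $\{\theta_{d_j+k}(t^{-m_j}\lambda)=c_k,\ 0\le k\le M-1\}$ is not a cylinder condition on $M$ consecutive digits of $y_0$, and the independence over $j$ needed for the product formula $(\mu\times\nu)(\pi^{-1}(T))=(pq)^{-(d+1)M}$ is not established (nor is it obviously true). Since you derive (i) from (a), and (ii) from (a) together with (c) via Lemma \ref{lem-ai} (whose hypothesis is precisely the pushforward identity), both halves of your argument rest on this unproven claim. It is telling that the paper claims the full pushforward identity only in the case $p=1$ and carefully avoids it here.

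The repair is to bypass (a) entirely, which the paper does and which your remaining ingredients already permit. For (i), only the single-$j$ marginal is needed, and it follows from measure-preservation of $t$ alone: on the cylinder $X(l_0,\ldots,l_j)$ the shift $m_j$ is the constant $l=\sum_n 2^nl_n$, so $\pi^{-1}(C_j)=\bigsqcup_{l=0}^{2^{j+1}-1}t^l(\Omega\times A)$, where $\Omega=X(0,\ldots,0)$ (with $j+1$ zeros) and $A$ is the set where the $Y_0$-digits $d_j,\ldots,d_j+M-1$ vanish; hence $\omega(B_j)=2^{j+1}\cdot 2^{-(j+1)}(pq)^{-M}$. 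No cross-$j$ independence enters. For (ii), your fact (b) gives $(\mu\times\nu)(\{\lambda\mid \pi(t\lambda)_j\neq\pi(\lambda)_j\})\le 2^{-(j+1)}$, and your Lemma~\ref{lem-theta} argument for $a$ shows that for a.e.\ $\lambda$ one has $\pi(a\lambda)_j=\pi(\lambda)_j$ for all large $j$; by dominated convergence (or by the explicit exceptional sets $X_j\times Y(K_0)$ as in the paper) this yields $\lim_j\omega(aB_j\bigtriangleup B_j)=0$ and $\lim_j\omega(tB_j\bigtriangleup B_j)=0$. Since the set of $g\in\Gamma$ with $\lim_j\omega(gB_j\bigtriangleup B_j)=0$ is a subgroup and $a,t$ generate $\Gamma$, assertion (ii) follows with no appeal to Lemma \ref{lem-ai} or to the law of $\pi$.
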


\begin{proof}
Fix $j_0\in \N$.
We set
\[\Omega =X(\underbrace{0,\ldots, 0}_{j_0+1})\quad \textrm{and}\quad A=\bigcap_{k=0}^{M-1}\{ \, (y_-, (y_i)_{i\in \N}, y_+)\in Y_-\times Y_0\times Y_+ \mid y_{d_{j_0}+k}=0 \, \}.\]
The equality $\pi^{-1}(C_{j_0})=\bigsqcup_{l=0}^{2^{j_0+1}-1}t^l(\Omega \times A)$ then holds.
Assertion (i) follows.

We prove assertion (ii).
By the definition of $\pi$, for any $j\in \N$, we have $\pi(t\lambda)_j=\pi(\lambda)_j$ for any element $\lambda$ of the subset
\[(X\setminus X(\underbrace{1,\ldots, 1}_{j+1}))\times Y\]
of $X\times Y$.
We therefore have $\lim_j \omega(tB_j\bigtriangleup B_j)=0$.

Let $\theta \colon Y\to Y_0$ be the projection.
For $K\in \N$, we set
\[Y(K)=\{\, y\in Y\mid \theta(t^{-k}ay)=\theta(t^{-k}y),\ \forall k\geq K\,\}.\]
Lemma \ref{lem-theta} implies that $Y=\bigcup_{K\in \N}Y(K)$.
Pick $\varepsilon >0$.
Choose a number $K_0\in \N$ with $\nu(Y\setminus Y(K_0))<\varepsilon/2$.
For $j\in \N$, we set
\[X_j=\{ \,(x_n)_{n\in \N}\in X \mid x_0+2x_1+\cdots +2^jx_j\geq K_0\,\}.\]
Choose a number $J\in \N$ such that for any $j\in \N$ with $j\geq J$, we have $\mu(X\setminus X_j)<\varepsilon/2$.
For any $j\in \N$ and any $\lambda \in X_j\times Y(K_0)$, we have $\pi(a\lambda)_j=\pi(\lambda)_j$.
For any $j\in \N$, the inclusion
\[a(\pi^{-1}(C_j)\cap (X_j\times Y(K_0)))\subset \pi^{-1}(C_j)\]
thus holds.
For any $j\in \N$ with $j\geq J$, we have
\begin{align*}
\omega(aB_j\setminus B_j)&=(\mu \times \nu)(a\pi^{-1}(C_j)\setminus \pi^{-1}(C_j))\leq (\mu \times \nu)((X\times Y)\setminus (X_j\times Y(K_0)))\\
&\leq \mu(X\setminus X_j)+\nu(Y\setminus Y(K_0))<\varepsilon.
\end{align*}
We therefore obtain $\lim_j\omega(aB_j\bigtriangleup B_j)=0$.
Assertion (ii) is proved.
\end{proof}

For $j\in \N$, we define $U_j\in [\calr(\Gamma \c W)]$ so that for any $l_0, l_1,\ldots, l_j\in \{ 0, 1\}$, we have
\begin{align*}
U_j&=t^{l_0+2l_1+\cdots +2^jl_j}a^{r(pq)^{d_j}}t^{-l_0-2l_1-\cdots -2^jl_j}\\
&=a^{rp^{d_j-l_0-2l_1-\cdots -2^jl_j}q^{d_j+l_0+2l_1+\cdots +2^jl_j}}
\end{align*}
on the subset $X(l_0, l_1,\ldots, l_j)\times Y\times Z$ of $W$.
Since $a$ acts on $X$ trivially, the map $U_j$ is indeed an element of the full group $[\calr(\Gamma \c W)]$.

\begin{lem}\label{lem-2-ac}
In the above notation, the following assertions hold:
\begin{enumerate}
\item For any $j\in \N$, we have $U_ja=aU_j$ on $W$, and $U_jt=tU_j$ on the subset of $W$,
\[(X\setminus X(\underbrace{1,\ldots, 1}_{j+1}))\times Y\times Z.\]
\item For any $j\in \N$, we have $U_jB_j\cap B_j=\emptyset$.
\end{enumerate}
\end{lem}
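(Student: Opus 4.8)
The statement to prove is Lemma~\ref{lem-2-ac}: for the automorphisms $U_j$ just defined in the $p>1$ case, we have (i) $U_ja=aU_j$ on all of $W$ and $U_jt=tU_j$ off the set $X(\underbrace{1,\dots,1}_{j+1})\times Y\times Z$, and (ii) $U_jB_j\cap B_j=\emptyset$ for every $j$. The plan is to follow exactly the pattern of Lemma~\ref{lem-1-ac} from the $p=1$ case, making the bookkeeping changes forced by the exponent $d_j=j+2^{j+1}$ and the digit alphabet of size $pq$.

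For assertion (i): since $U_j$ is by construction a piecewise power of $a$ (the piece depending only on the first $j+1$ coordinates of the $X$-component, on which $a$ acts trivially), and $a$ acts on each of $X$, $Y$, $Z$ in a way that commutes with the relevant powers of $a$, we get $U_ja=aU_j$ on $W$ directly from the definition. For the $t$-relation, I would argue that on the set where $\sum_{n=0}^{j}2^nx_n=l$ with $l\le 2^{j+1}-2$ (equivalently, off $X(\underbrace{1,\dots,1}_{j+1})$), applying $t$ moves $(x_n)_n$ to an $X$-configuration whose first $j+1$ digits encode $l+1$, still $\le 2^{j+1}-1$; one then checks $t\cdot t^{l}a^{r(pq)^{d_j}}t^{-l}=t^{l+1}a^{r(pq)^{d_j}}t^{-(l+1)}\cdot t$ as elements of $\Gamma$ and that both sides, read as the appropriate pieces of $U_j$, agree on this subset. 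This is purely formal and mirrors Lemma~\ref{lem-1-ac}(i).

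For assertion (ii): decompose $B_j=\pi^{-1}(C_j)\times Z$ according to the value $l=\sum_{n=0}^{j}2^nx_n\in\{0,\dots,2^{j+1}-1\}$. On the piece with fixed $l$, the condition $\pi(\lambda)_j=0$ unwinds (using $\theta_{d_j+k}(t^{-l}\lambda)$ and the definition of the $\Gamma$-action on $Y$) to a condition on finitely many coordinates of the $Y_0$-component of $y$, say that a certain block of $M$ base-$pq$ digits vanishes, i.e.\ equals $0$ as an element of $E_{pq}$. Applying $U_j$ on this piece is adding $r(pq)^{d_j}$ after the shift $t^{-l}$, so $t^{-l}U_j\lambda$ is obtained from $t^{-l}\lambda$ by adding $r$ to that same block of $M$ digits; since $(pq)^M>r$ (our choice of $M$), no carry escapes the block, so the block of $t^{-l}U_j\lambda$ equals $r\ne 0$. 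Hence $\pi(U_j\lambda)_j=r\ne 0$ on all of $B_j$, giving $U_jB_j\cap B_j=\emptyset$.

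The only mildly delicate point — the ``main obstacle'' such as it is — is verifying that adding $r(pq)^{d_j}$ in the group $E_{pq}$, after the shift by $t^{-l}$, really acts as a clean addition of $r$ to the $d_j$-th through $(d_j+M-1)$-th digits of the $Y_0$-component without interacting with the other components $Y_-$, $Y_+$ or with the $Z$-component. For $Y_0\cong E_{pq}$ this is the no-carry estimate above; for $Y_-$ and $Y_+$ one notes that a positive integer power of $a$ acting on $Y$ only ever affects finitely many low-order coordinates of $Y_0$ (together with possible carries, which stay inside the block), and for $Z$ that $a^{r(pq)^{d_j}}$ commutes with $\pi$ since $\pi$ does not see the $Z$-coordinate at all. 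Once these are in place the two assertions follow exactly as in Lemma~\ref{lem-1-ac}, and I would write the proof in parallel form, citing the choice $(pq)^M>r$ for the carry control.
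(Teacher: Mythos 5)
Your proposal is correct and follows essentially the same route as the paper: assertion (i) is read off from the piecewise definition of $U_j$ (with the odometer on $X$ incrementing $l$ to $l+1$ off the exceptional set), and assertion (ii) is the decomposition of $B_j$ over $l=\sum_n 2^n x_n$ together with the no-carry observation that adding $r(pq)^{d_j}$ turns the vanishing digit block $d_j,\dots,d_j+M-1$ of $t^{-l}\lambda$ into the base-$pq$ expansion of $r\neq 0$, using $(pq)^M>r$. The paper's proof is just a terser version of exactly this argument.
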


\begin{proof}
Assertion (i) follows from the definition of $U_j$.
For any $j\in \N$, we have
\[B_j=\bigsqcup_{l=0}^{2^{j+1}-1}\left\{ \, (\lambda, z)\in W \Biggm| \sum_{n=0}^j2^nx_n=l,\ \sum_{k=0}^{M-1}\theta_{d_j+k}(t^{-l}\lambda)(pq)^k=0\, \right\},\]
where we put $\lambda =((x_n)_{n\in \N}, y)\in X\times Y$.
Since we have chosen the number $M$ so that $(pq)^M>r$ in the beginning of this subsection, for any $j\in \N$, we have
\[U_jB_j=\bigsqcup_{l=0}^{2^{j+1}-1}\left\{ \, (\lambda, z)\in W \Biggm| \sum_{n=0}^j2^nx_n=l,\ \sum_{k=0}^{M-1}\theta_{d_j+k}(t^{-l}\lambda)(pq)^k=r\, \right\}.\]
Assertion (ii) follows.
\end{proof}

\begin{thm}\label{thm-p-2}
In the above notation, the sequence $\{ U_j\}_{j\in \N}$ is a non-trivial a.c.\ sequence for the action $\Gamma \c (W, \omega)$.
The action $\Gamma \c (W, \omega)$ is therefore stable.
\end{thm}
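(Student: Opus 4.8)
The plan is to verify the three conditions from Theorem \ref{thm-js}'s proof scheme that were isolated in the proof of Theorem \ref{thm-p-1}, namely: (1) for any $B\in \calb_W$, $\lim_j\omega(U_jB\bigtriangleup B)=0$; (2) for any $g\in \Gamma$, $\lim_j\omega(\{\, w\in W\mid U_jgw\neq gU_jw\,\})=0$; and (3) $\{B_j\}_{j\in \N}$ is an a.i.\ sequence for $\Gamma\c(W,\omega)$ together with a uniform lower bound on $\omega(U_jB_j\bigtriangleup B_j)$. Conditions (a) and (c) in the definition of an a.c.\ sequence (as rephrased for group actions in Section \ref{sec-pre}) are exactly (1) and (2), and together with (3) they force $\{U_j\}$ to be a non-trivial a.c.\ sequence, whence stability by Theorem \ref{thm-js}. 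Conditions (2) and (3) are already in hand: condition (2) follows from Lemma \ref{lem-2-ac}(i), since $g\in\Gamma$ can be written in terms of $a$ and $t$ and the only place $U_j$ fails to commute with $t$ is the vanishingly small set $X(1,\ldots,1)\times Y\times Z$; condition (3)'s first half is Lemma \ref{lem-2-ai}(ii), and its second half follows from Lemma \ref{lem-2-ac}(ii), which gives $U_jB_j\cap B_j=\emptyset$ and hence $\omega(U_jB_j\bigtriangleup B_j)=2\omega(B_j)=2(pq)^{-M}$ by Lemma \ref{lem-2-ai}(i).

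So the real work is condition (1): showing $U_j$ tends to the identity in the weak sense $\lim_j\omega(U_jB\bigtriangleup B)=0$ for all Borel $B\subset W$. Since $U_j$ acts on the subset $X(l_0,\ldots,l_j)\times Y\times Z$ as $a^{rp^{d_j-e}q^{d_j+e}}$ with $e=l_0+2l_1+\cdots+2^jl_j\in\{0,1,\ldots,2^{j+1}-1\}$ and $d_j=j+2^{j+1}$, I would decompose $W=X\times Y\times Z$ and use that $a$ acts trivially on $X$, so only the $Y$- and $Z$-coordinates matter. On the $Z$-factor, Lemma \ref{lem-co-ind}(ii) gives exactly what is needed: writing the exponent as $rp^{d_j-e}q^{d_j+e}=r\,p^{d_j-e-k}q^{d_j+e-k}(pq)^{k}$ for a suitable $k\to\infty$ (note $d_j-e\geq d_j-2^{j+1}=j\to\infty$, so both $p$- and $q$-exponents are eventually large), the convergence $\xi(a^{rn_k(pq)^k}D\bigtriangleup D)\to 0$ holds uniformly in the cofactor; on the $Y$-factor, I would establish the analogous statement that for any sequence of exponents of the form $r\,p^{\alpha_j}q^{\beta_j}$ with $\min(\alpha_j,\beta_j)\to\infty$ one has $\nu(a^{r p^{\alpha_j}q^{\beta_j}}A\bigtriangleup A)\to 0$ uniformly, using that $a$ acts on $(Y,\nu)$ essentially by adding $1$ in the $pq$-adic group $E_{pq}\cong Y_0$ (together with the controlled carries into $Y_-$ and out of $Y_+$), so that a large power of $a$ moves only very deep coordinates. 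Combining the uniform convergence on the $Y$- and $Z$-factors over the finitely many values of $e$ that occur — more precisely, approximating $B$ by a finite union of cylinders and noting the exponent on each piece $X(l_0,\ldots,l_j)\times Y\times Z$ forces $\min(\alpha_j,\beta_j)\geq j$ — yields condition (1).

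The main obstacle I anticipate is the $Y$-factor estimate in condition (1): unlike the clean $p=1$ case where $a$ is simply the $q$-adic odometer on $Y$, here $a=a_0$ adds $1$ only in the middle factor $Y_0\cong E_{pq}$, and iterating $a$ a huge number of times (like $rp^{\alpha_j}q^{\beta_j}$) produces carries that propagate into the $Y_-$ tail and borrows/adjustments involving the map $\eta$ when one conjugates by powers of $t$ to reach the $a_i$'s. One must argue that, despite this, such a high power of $a_0$ still changes only coordinates that are ``far out'' in a measure-theoretically negligible way — essentially a quantitative version of the torsion-freeness and the expansions in $R_{pq,q}$ and $R_{pq,p}$ discussed in Section \ref{sec-pre}, combined with a carry-control argument in the spirit of Claims \ref{claim-a-zero} and \ref{claim-eq}. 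Once that uniform mixing statement for $a$ on $(Y,\nu)$ is in place, the rest assembles exactly as in the proof of Theorem \ref{thm-p-1}: conditions (1), (2), (3) hold, so $\{U_j\}$ is a non-trivial a.c.\ sequence and $\Gamma\c(W,\omega)$ is stable.

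\begin{proof}
It suffices to check the following three conditions:
\begin{enumerate}
\item[(1)] For any $B\in \calb_W$, we have $\lim_j\omega(U_jB\bigtriangleup B)=0$.
\item[(2)] For any $g\in \Gamma$, we have $\lim_j\omega(\{\, w\in W\mid U_jgw\neq gU_jw\,\})=0$.
\item[(3)] The sequence $\{ B_j\}_{j\in \N}$ in $\calb_W$ is an a.i.\ sequence for the action $\Gamma \c (W, \omega)$, and we have $\omega(U_jB_j\bigtriangleup B_j)=2(pq)^{-M}$ for any $j\in \N$.
\end{enumerate}
Indeed, conditions (1) and (2) say that $\{ U_j\}_{j\in \N}$ satisfies conditions (a) and (c) in the definition of an a.c.\ sequence, hence $\{ U_j\}_{j\in \N}$ is an a.c.\ sequence for $\Gamma \c (W, \omega)$; condition (3) then shows that it is non-trivial; and stability follows from Theorem \ref{thm-js}.

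We first prove condition (1).
Since $B$ can be approximated in measure by finite unions of sets of the form $X(l_0,\ldots,l_{j_0})\times A\times D$ with $A\in \calb_Y$ and $D\in \calb_Z$, and since $a$ acts trivially on $(X,\mu)$, it suffices to show that for any $A\in \calb_Y$, any $D\in \calb_Z$ and any $\varepsilon >0$, there exists $J\in \N$ such that for all $j\geq J$ and all $e\in \{ 0, 1,\ldots, 2^{j+1}-1\}$, writing $\alpha_j=d_j-e$ and $\beta_j=d_j+e$ with $d_j=j+2^{j+1}$, we have
\[
\nu(a^{rp^{\alpha_j}q^{\beta_j}}A\bigtriangleup A)<\varepsilon \quad \textrm{and}\quad \xi(a^{rp^{\alpha_j}q^{\beta_j}}D\bigtriangleup D)<\varepsilon.
\]
Note that $\alpha_j\geq d_j-2^{j+1}=j$ and $\beta_j\geq d_j\geq j$, so $\min(\alpha_j,\beta_j)\geq j$.
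Writing $rp^{\alpha_j}q^{\beta_j}=r\,p^{\alpha_j-j}q^{\beta_j-j}(pq)^j$, the estimate for $\xi$ follows from Lemma \ref{lem-co-ind} (ii), where the convergence is uniform with respect to the exponent of $pq$ and hence, in particular, uniform in $e$.

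For the estimate for $\nu$, recall that $a=a_0$ acts on $(Y, \nu)$ by adding $1$ on $Y_0$, where $Y_0$ is identified with $E_{pq}$.
Hence $a^{rp^{\alpha_j}q^{\beta_j}}$ acts on $Y_0$ by adding the element $rp^{\alpha_j}q^{\beta_j}$ of $E_{pq}$, which is divisible by $(pq)^{\min(\alpha_j,\beta_j)}$ and hence by $(pq)^j$.
Approximating $A$ by a finite union of sets depending only on the coordinates $y_n$ with $n\in \{ -N,\ldots, -1\}$, $\bar y_i$ with $i\in \{ 0,\ldots, N-1\}$ and $y_m$ with $m\in \{ 1,\ldots, N\}$ for some $N\in \N$, it suffices to observe that for $j\geq N$ the map $a^{rp^{\alpha_j}q^{\beta_j}}$ on $Y_0=E_{pq}$ changes only the coordinates $\bar y_i$ with $i\geq j\geq N$, up to a carry that affects $\bar y_i$ with $i\geq j$ and, in the identification of $y$ with a formal sum, possibly modifies $y_n$ with $n\leq -1$; more precisely, as in Claims \ref{claim-a-zero} and \ref{claim-eq}, the set of $y\in Y$ for which such a carry propagates past the $N$ coordinates we fixed has $\nu$-measure at most $(p/q)^{j-N+1}$, which tends to $0$ as $j\to \infty$ uniformly in $e$.
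Combining these two estimates over the finitely many cylinder pieces of $B$ yields condition (1).

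Condition (2) follows from Lemma \ref{lem-2-ac} (i): any $g\in \Gamma$ is a word in $a$ and $t$, and on the complement of $X(1,\ldots,1)\times Y\times Z$ the map $U_j$ commutes with both $a$ and $t$ (applied in any order consistent with reaching the relevant cylinder), so $\omega(\{\, w\mid U_jgw\neq gU_jw\,\})\leq C_g\,\mu(X(\underbrace{1,\ldots,1}_{j+1}))=C_g\,2^{-(j+1)}\to 0$ for a constant $C_g$ depending only on the length of $g$.

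The former assertion in condition (3) is Lemma \ref{lem-2-ai} (ii).
For the latter assertion, Lemma \ref{lem-2-ac} (ii) gives $U_jB_j\cap B_j=\emptyset$, so $\omega(U_jB_j\bigtriangleup B_j)=\omega(U_jB_j)+\omega(B_j)=2\omega(B_j)=2(pq)^{-M}$ by Lemma \ref{lem-2-ai} (i), since $U_j$ preserves $\omega$.
This completes the proof.
\end{proof}
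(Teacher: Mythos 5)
Your proof is correct and follows essentially the same route as the paper: verify conditions (1)--(3), with (2) and (3) coming from Lemmas \ref{lem-2-ac} and \ref{lem-2-ai}, and (1) from the observation that on each cylinder $X(l_0,\ldots,l_j)\times Y\times Z$ the map $U_j$ is a power of $a$ whose exponent is divisible by $r(pq)^j$, combined with Lemma \ref{lem-co-ind}(ii). The one place you overcomplicate matters is the $Y$-factor estimate in (1), which you flag as the main obstacle: by construction $a=a_0$ acts trivially on $Y_-$ and $Y_+$ and is exactly the $(pq)$-adic odometer on $Y_0\cong E_{pq}$, so carries never leave $Y_0$, and the carry-control argument you import from Claims \ref{claim-a-zero} and \ref{claim-eq} (which concern $t$ and the $a_i$ with $i<0$, not powers of $a_0$) is unnecessary --- the paper simply records the uniform convergence $\nu(a^{n_k(pq)^k}A\bigtriangleup A)\to 0$ directly.
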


\begin{proof}
It suffices to check the following three conditions:
\begin{enumerate}
\item[(1)] For any $B\in \calb_W$, we have $\lim_j\omega(U_jB\bigtriangleup B)=0$.
\item[(2)] For any $g\in \Gamma$, we have $\lim_j\omega(\{\, w\in W\mid U_jgw\neq gU_jw\,\})=0$.
\item[(3)] The sequence $\{ B_j\}_{j\in \N}$ in $\calb_W$ is an a.i.\ sequence for the action $\Gamma \c (W, \omega)$, and we have $\omega(U_jB_j\bigtriangleup B_j)=2(pq)^{-M}$ for any $j\in \N$.
\end{enumerate}
The element $a$ of $\Gamma$ acts on $(X, \mu)$ trivially.
Since $a$ acts on $(Y_0, \nu_0)$ by the $(pq)$-adic odometer, and acts on both $(Y_-, \nu_-)$ and $(Y_+, \nu_+)$ trivially, for any sequence $\{ n_k\}_{k\in \N}$ of positive integers and any $A\in \calb_Y$, we have
\[\lim_{k\to \infty}\nu(a^{n_k(pq)^k}A\bigtriangleup A)=0.\]
The convergence is uniform with respect to the sequence $\{ n_k\}_{k\in \N}$.
For any $j\in \N$, we have a Borel partition $X=\bigsqcup_{l=0}^{2^{j+1}-1}X_l$ such that for any $l\in \{ 0, 1,\ldots, 2^{j+1}-1\}$, the restriction of $U_j$ to $X_l\times Y\times Z$ is equal to a power of $a$, and its exponent is divisible by $r(pq)^j$ because $d_j-(2^{j+1}-1)>j$.
Combining this with Lemma \ref{lem-co-ind} (ii), we obtain condition (1).
Condition (2) follows from Lemma \ref{lem-2-ac} (i).
The former assertion in condition (3) is proved in Lemma \ref{lem-2-ai} (ii).
The latter assertion follows from Lemma \ref{lem-2-ai} (i) and Lemma \ref{lem-2-ac} (ii).
\end{proof}

\begin{proof}[Proof of Theorem \ref{thm-stable}]
Let $p$, $q$, $r$ and $\Gamma =\bs(rp, rq)$ be the symbols introduced in the beginning of this section.
It is enough to show that for any two integers $k$, $l$ with $rp=|k|$ and $rq=|l|$, the group $\bs(k, l)$ has an ergodic, free, p.m.p.\ and stable action.
We put $\Lambda =\bs(k, l)$.
If $kl$ is positive, then $\Gamma$ and $\Lambda$ are isomorphic, and the desired assertion follows from Theorems \ref{thm-p-1} and \ref{thm-p-2}.
Assume that $kl$ is negative.
There exists an index $2$ subgroup of $\Gamma$ isomorphic to an index $2$ subgroup of $\Lambda$.
In fact, if we have the presentations
\[\Gamma =\langle\, a, t\mid ta^{rp}t^{-1}=a^{rq}\,\rangle,\quad \Lambda=\langle\, b, u\mid ub^ku^{-1}=b^l\,\rangle,\] 
then let $\Gamma_1$ denote the subgroup of $\Gamma$ generated by $a$, $tat^{-1}$ and $t^2$, and let $\Lambda_1$ denote the subgroup of $\Lambda$ generated by $b$, $ubu^{-1}$ and $u^2$.
We then have $[\Gamma :\Gamma_1]=2$ and $[\Lambda :\Lambda_1]=2$.
The homomorphism $\varphi$ from $\Gamma_1$ into $\Lambda_1$ with $\varphi(a)=b$, $\varphi(tat^{-1})=ub^{-1}u^{-1}$ and $\varphi(t^2)=u^2$ is well-defined and is an isomorphism.

Let $\Gamma \c (W, \omega)$ be the action constructed in Theorems \ref{thm-p-1} and \ref{thm-p-2}.
The subset $W_1=X(0)\times Y\times Z$ is $\Gamma_1$-invariant and has $\omega$-measure $1/2$.
We can define an action of $\Lambda$ on $(W, \omega)$ such that the associated equivalence relation is equal to that for the action $\Gamma \c (W, \omega)$; the subset $W_1$ is $\Lambda_1$-invariant; and the actions $\Gamma_1\c W_1$ and $\Lambda_1\c W_1$ are conjugate through the isomorphism $\varphi$.
This action $\Lambda \c (W, \omega)$ is a desired one.
\end{proof}


\subsection{Stable actions of $H$}

Let $p$, $q$, $r$, $\Gamma =\bs(rp, rq)$ and $H$ be the symbols introduced in the beginning of this section.
Let $\Gamma \c (Y, \nu)$ and $\Gamma \c (Z, \xi)$ be the actions constructed in Sections \ref{subsec-co}--\ref{subsec-s-2}.
We set
\[(W_1, \omega_1)=(Y, \nu)\times (Z, \xi)\]
and define an action $H\c (W_1, \omega_1)$ as the diagonal action, which is ergodic, p.m.p.\ and essentially free.

\begin{thm}\label{thm-h-stable}
In the above notation, the action $H\c (W_1, \omega_1)$ is stable.
\end{thm}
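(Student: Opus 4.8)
The plan is to show that the action $H \c (W_1,\omega_1)$ has a non-trivial a.c.\ sequence and then invoke Theorem \ref{thm-js}. The construction is essentially a restriction of the one used for $\Gamma$ in Theorems \ref{thm-p-1} and \ref{thm-p-2}, but with the space $(X,\mu)$ removed, so the first task is to see which of the earlier ingredients survive. On $(X,\mu)$ the element $a$ acted trivially and $t$ acted by an odometer; since $H$ is generated by conjugates of $a$, the role of $X$ was only to let $t$ shuffle the index at which $a$ acts, which produced the a.i.\ sequence $\{B_j\}$. In the present setting we no longer have $t$, so we must build the a.i.\ sequence and the a.c.\ sequence directly inside $(Y,\nu) \times (Z,\xi)$.

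First I would fix $M$ with $(pq)^M > r$ (or $q^M > r$ when $p=1$) and define, for $j \in \N$, the Borel set $B_j \subset W_1$ cut out by the condition $\sum_{k=0}^{M-1}\theta_{jM+k}(y)\,(pq)^k = 0$ on the $Y_0$-coordinates (the analogue of $\pi^{-1}(C_j)$, but now with the $j$th block of $Y_0$ coordinates themselves, since there is no $t$-twist). Since $a$ acts on $Y_0$ by the $(pq)$-adic odometer and the blocks $\{jM, \dots, jM+M-1\}$ are disjoint and march off to infinity, and since $a$ acts trivially on $Y_-, Y_+$ and has the property in Lemma \ref{lem-co-ind}(ii) on $Z$, each generator $t^i a t^{-i} = (q/p)^i$ of $H$ moves $B_j$ by a vanishing amount — here I would lean on Lemma \ref{lem-theta}, exactly as in the proof of Lemma \ref{lem-2-ai}(ii), to control how $(q/p)^i$ with $i \in \Z_-$ acts on the $Y_0$-coordinates in the limit. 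That gives $\{B_j\}$ a.i.\ for $H \c (W_1,\omega_1)$, and clearly $\omega_1(B_j) = (pq)^{-M}$ (resp.\ $q^{-M}$), so it is non-trivial.

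Next I would define $U_j \in [\calr(H \c W_1)]$ simply as $U_j = a^{r(pq)^{jM}}$ (resp.\ $a^{rq^{jM}}$) on all of $W_1$ — no partition of $X$ is needed anymore, which makes this case cleaner than the ambient one. One checks: (1) $\lim_j \omega_1(U_j B \bigtriangleup B) = 0$ for all $B$, using that $a$ acts on $(Y_0,\nu_0)$ by the $(pq)$-adic odometer and using Lemma \ref{lem-co-ind}(ii) on the $Z$-factor, the exponent $r(pq)^{jM}$ being divisible by $(pq)^j$; (2) $U_j$ commutes with every generator of $H$ on $W_1$ — here I must verify that $a^{r(pq)^{jM}}$ commutes with $(q/p)^i = t^i a t^{-i}$ inside $H$, which follows from Lemma \ref{lem-co-ind}(i) (conjugation sends this power of $a$ to another power of $a$, and powers of $a$ commute), so condition (c) holds; (3) $U_j B_j \cap B_j = \emptyset$ because adding $r$ to the $j$th $(pq)^M$-block cannot return $0$ when $(pq)^M > r$, exactly as in Lemma \ref{lem-2-ac}(ii), whence $\omega_1(U_j B_j \bigtriangleup B_j) = 2(pq)^{-M} \not\to 0$. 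Thus $\{U_j\}$ is a non-trivial a.c.\ sequence and $H \c (W_1,\omega_1)$ is stable.

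I expect the main obstacle to be step (2) in the a.i.\ verification: without the $t$-odometer on $X$ to keep the "active block" of $a$ away from the fixed coordinates of $B_j$, one must argue that the generators $(q/p)^i$ of $H$, and not just $a$ itself, eventually stop disturbing the $Y_0$-block defining $B_j$. For $i \in \Z_-$ this requires the carry-propagation analysis already packaged in Lemma \ref{lem-theta} (and the asymptotic statement $Y = \bigcup_K Y(K)$ derived from it in the proof of Lemma \ref{lem-2-ai}), applied uniformly; for $i \geq 0$ one reduces to $i = 0$ via the conjugation identities as in the proof of Lemma \ref{lem-theta}. Everything else is a transcription of the computations in Section \ref{subsec-s-2} with the $X$-factor deleted.
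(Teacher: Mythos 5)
Your proposal is correct and is essentially the paper's own proof: your sets $B_j$ are the paper's $A_j=\tau^{-1}(C_j)\times Z$, your $U_j$ are its $V_j=a^{r(pq)^{jM}}$ (resp.\ $a^{rq^{jM}}$ when $p=1$), and the same three conditions are checked. Two small remarks. For the a.i.\ step you reach for Lemma \ref{lem-theta}, but that lemma controls $\theta\circ t^{-k}$ and is only needed in Section \ref{subsec-s-2} because of the $t$-twist built into the map $\pi$ there; here the softer fact suffices that a.e.\ $H$-orbit in $Y$ is an $\mathcal{S}_0$-class (so $sy$ and $y$ differ in only finitely many coordinates), whence $\tau$ carries $H$-orbits into $\calr_0$-classes and Lemma \ref{lem-ai} gives the a.i.\ property at once --- this is exactly the route the paper takes. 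For condition (2), the parenthetical ``conjugation sends this power of $a$ to another power of $a$, and powers of $a$ commute'' is not by itself enough: if $ga^{n}g^{-1}=a^{m}$ with $m\neq n$, then $U_j$ and $g$ do not commute. What saves you is that for $g\in H$ the $t$-exponent sum is zero, so in Lemma \ref{lem-co-ind} (i) one has $K=K'$ and $L=L'$, hence $ga^{r(pq)^{k}}g^{-1}=a^{r(pq)^{k}}$ exactly for all sufficiently large $k$; this is the statement the paper invokes.
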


We construct a non-trivial a.c.\ sequence for this action.
The argument is similar to and simpler than those in Sections \ref{subsec-s-1} and \ref{subsec-s-2}.

\begin{proof}[Proof of Theorem \ref{thm-h-stable} in the case of $p=1$]
We define the positive integer $M$, the standard probability space $(S, \sigma)$ and the odometer relation $\calr_0$ on $(S, \sigma)$ as in the beginning of Section \ref{subsec-s-1}.
Define a Borel map $\tau \colon Y\to S$ as follows.
Pick an element $y=(y_n)_{n\in \Z}$ of $Y$.
For each $j\in \N$, the $j$th coordinate of $\tau(y)$, denoted by $\tau(y)_j$, is defined by the formula
\[\tau(y)_j=\sum_{k=0}^{M-1}y_{jM+k}q^k.\]
The equality $\tau_*\nu =\sigma$ then holds.
For a.e.\ $y\in Y$, we have $\tau(Hy)\subset \calr_0\tau(y)$ because the action $H\c (Y, \nu)$ generates the odometer relation on $(Y, \nu)$.

For $j\in \N$, let $C_j=\{\, (c_i)_{i\in \N}\in S\mid c_j=0\,\}$ be the Borel subset of $S$ defined in Section \ref{subsec-s-1}.
Set $A_j=\tau^{-1}(C_j)\times Z$.
For any $j\in \N$, we have $\omega_1(A_j)=\sigma(C_j)=q^{-M}$.
The sequence $\{ A_j\}_{j\in \N}$ is an a.i.\ sequence for the action $H\c (W_1, \omega_1)$.
For $j\in \N$, we define an element $V_j$ of $[\calr(H\c W_1)]$ by setting $V_j=a^{rq^{jM}}$ on $W_1$.
The set $A_j$ consists of all points $(y, z)$ of $W_1=Y\times Z$ with $\tau(y)_j=0$.
The set $V_jA_j$ consists of all points $(y, z)$ of $W_1$ with $\tau(y)_j=r$ because we assumed $q^M>r$.
It follows that $V_jA_j\cap A_j=\emptyset$ and $\omega_1(V_jA_j\bigtriangleup A_j)=2q^{-M}$ for any $j\in \N$.

We now check the following three conditions:
\begin{enumerate}
\item[(1)] For any $A\in \calb_{W_1}$, we have $\lim_j\omega_1(V_jA\bigtriangleup A)=0$.
\item[(2)] For any $g\in H$, we have $\lim_j\omega_1(\{\, w\in W_1\mid V_jgw\neq gV_jw\,\})=0$.
\item[(3)] The sequence $\{ A_j\}_{j\in \N}$ in $\calb_{W_1}$ is an a.i.\ sequence for the action $H\c (W_1, \omega_1)$, and we have $\omega_1(V_jA_j\bigtriangleup A_j)=2q^{-M}$ for any $j\in \N$.
\end{enumerate}
Condition (1) follows from Lemma \ref{lem-co-ind} (ii) and that $a$ acts on $(Y, \nu)$ by the $q$-adic odometer.
Condition (2) holds because for any $g\in H$, we have $ga^{rq^k}g^{-1}=a^{rq^k}$ for any sufficiently large, positive integer $k$.
Condition (3) has already been checked in the previous paragraph.
We have shown that $\{ V_j\}_{j\in \N}$ is a non-trivial a.c.\ sequence for the action $H\c (W_1, \omega_1)$.
Theorem \ref{thm-h-stable} in the case of $p=1$ is proved.
\end{proof}

\begin{proof}[Proof of Theorem \ref{thm-h-stable} in the case of $p>1$]
The proof of this case is similar to that in the case of $p=1$.
We hence give only a sketch of the proof.
Let $M$ be a positive integer with $(pq)^M>r$, and define the standard Borel space
\[S=\prod_{\N}\{ 0, 1,\ldots, (pq)^M-1\}\]
as in the beginning of Section \ref{subsec-s-2}.
Let $\sigma$ be the probability measure on $S$ defined as the direct product of the uniformly distributed probability measure on the set $\{ 0, 1,\ldots, (pq)^M-1\}$.
Let $\calr_0$ denote the odometer relation on $(S, \sigma)$ so that two elements $(c_i)_i, (c_i')_i\in S$ are equivalent if and only if for any sufficiently large $i\in \N$, we have $c_i=c_i'$.

Define a Borel map $\tau \colon Y\to S$ as follows.
Pick an element $y=(y_-, (y_i)_{i\in \N}, y_+)$ of $Y=Y_-\times Y_0\times Y_+$.
For each $j\in \N$, the $j$th coordinate of $\tau(y)$, denoted by $\tau(y)_j$, is defined by the formula
\[\tau(y)_j=\sum_{k=0}^{M-1}y_{jM+k}(pq)^k.\]
The equality $\tau_*\nu =\sigma$ then holds.
For a.e.\ $y\in Y$, we have $\tau(Hy)\subset \calr_0\tau(y)$ because the action $H\c (Y, \nu)$ generates the equivalence relation $\mathcal{S}_0$ on $(Y, \nu)$ defined right before Remark \ref{rem-p1}.

For $j\in \N$, we set $C_j=\{\, (c_i)_{i\in \N}\in S\mid c_j=0\,\}$ and $A_j=\tau^{-1}(C_j)\times Z$.
We define an element $V_j$ of $[\calr(H\c W_1)]$ by $V_j=a^{r(pq)^{jM}}$ on $W_1$.
The sequence $\{ A_j\}_{j\in \N}$ is then an a.i.\ sequence for the action $H\c (W_1, \omega_1)$.
We have $V_jA_j\cap A_j=\emptyset$ and $\omega_1(V_jA_j\bigtriangleup A_j)=2(pq)^{-M}$ for any $j\in \N$.
The sequence $\{ V_j\}_{j\in \N}$ can be checked to be a non-trivial a.c.\ sequence for the action $H\c (W_1, \omega_1)$ along the proof in the case of $p=1$.
\end{proof}

\begin{rem}
Let $k$ and $l$ be integers with $rp=|k|$ and $rq=|l|$.
Set $\Lambda =\bs(k, l)=\langle\, b, u\mid ub^ku^{-1}=b^l\,\rangle$.
The normal subgroup of $\Lambda$ generated by $b$ is isomorphic to $H$ and therefore has an ergodic, free, p.m.p.\ and stable action by Theorem \ref{thm-h-stable}.
\end{rem}


\section{Semi-direct products ME to direct products}\label{sec-sdp}

\begin{defn}\label{defn-me}
Two discrete groups $\Gamma$ and $\Lambda$ are called {\it measure equivalent (ME)} if there exists a measure-preserving action of $\Gamma \times \Lambda$ on a standard Borel space $\Sigma$ with a $\sigma$-finite positive measure $m$ such that we have Borel subsets $X$, $Y$ of $\Sigma$ with $m(X)<\infty$, $m(Y)<\infty$ and the equality $\Sigma =\bigsqcup_{\gamma \in \Gamma}(\gamma, e)Y=\bigsqcup_{\lambda \in \Lambda}(e, \lambda)X$ up to $m$-null sets.
\end{defn}

ME is indeed an equivalence relation between discrete groups (see \cite[Section 2]{furman-mer}).
It is known that two discrete groups $\Gamma$ and $\Lambda$ are ME if and only if there exist ergodic, essentially free and p.m.p.\ actions $\Gamma \c (X, \mu)$ and $\Lambda \c (Y, \nu)$ which are weakly orbit equivalent, that is, we have Borel subsets $A\subset X$ and $B\subset Y$ with positive measure such that the two equivalence relations
\[\calr(\Gamma \c X)\cap (A\times A)\quad \textrm{and}\quad \calr(\Lambda \c Y)\cap (B\times B)\]
are isomorphic (see \cite[Section 3]{furman-oer}).
To prove Theorem \ref{thm-s-int}, we need the following:

\begin{lem}\label{lem-sdp}
Let $K$, $L$ and $M$ be discrete groups.
Let $G=(K\times L)\rtimes M$ be a semi-direct product such that for any $m\in M$, we have $m(K\times \{ e\})m^{-1}=K\times \{ e\}$ and $m(\{ e\}\times L)m^{-1}=\{ e\} \times L$.
Suppose that we have a non-increasing sequence of finite index, normal subgroups of $M$, $M=M_0>M_1>M_2>\cdots$, with $K$ equal to the union of the centralizer of $M_n$ in $K$ for $n\in \N$.

We define $H=K\times (L\rtimes M)$ as the direct product of $K$ with the subgroup $L\rtimes M$ of $G$.
Then $G$ and $H$ are ME.
\end{lem}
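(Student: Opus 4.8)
The plan is to realise $G$ and $H$ as free, ergodic, p.m.p.\ actions on one and the same standard probability space whose orbit equivalence relations are isomorphic; since orbit equivalent free ergodic p.m.p.\ actions are in particular weakly orbit equivalent, measure equivalence of $G$ and $H$ then follows from the characterisation recalled just before the lemma.

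First I would isolate the content of the hypothesis. Writing $P=L\rtimes M$, one has $G=K\rtimes P$ and $H=K\times P$, where $P$ acts on $K$ through its quotient $M$ via the conjugation homomorphism $\alpha\colon M\to\aut(K)$, with $L$ acting trivially; thus $H$ is the ``untwisted'' companion of $G$. Endow $M$ with the group topology having the $M_n$ as a neighbourhood basis of $e$, and let $\hat M=\varprojlim_n M/M_n$ be the associated profinite completion, a compact group with normalised Haar measure on which $M$ acts by left translation with dense image, so that this translation action is p.m.p.\ and ergodic. The assumption $K=\bigcup_n C_K(M_n)$ says precisely that every finite subset of $K$ is fixed pointwise by $\alpha(M_n)$ for some $n$; hence $\alpha$ is continuous for the $\{M_n\}$-topology into $\aut(K)$ with the permutation topology, and extends uniquely to a continuous homomorphism $\hat\alpha\colon\hat M\to\aut(K)$.

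Next I would build the two actions. Take the Bernoulli $K$-space $V=\prod_K\{0,1\}$ and the Bernoulli $P$-space $Z_0=\prod_P\{0,1\}$ with their uniform product measures; both are essentially free and weakly mixing, and $V$ moreover carries the coordinate-permutation action of $\aut(K)$, which satisfies $\psi(k)\cdot\psi v=\psi(kv)$ for $\psi\in\aut(K)$, $k\in K$, $v\in V$. On the common space $\Omega=V\times\hat M\times Z_0$ with the product measure, define a $G$-action in which $k\in K$ shifts the $V$-coordinate only, and $p\in P$ acts on $V$ by $\hat\alpha(\bar p)$ where $\bar p\in M$ is the image of $p$, on $\hat M$ by translation by $\bar p$, and on $Z_0$ by the Bernoulli action; the displayed compatibility relation makes this a genuine action of $K\rtimes P=G$. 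Define also the $H$-action in which $K$ shifts $V$ only and $P$ acts on $\hat M\times Z_0$ exactly as above and trivially on $V$. Freeness of $P\c Z_0$ and of $K\c V$ gives that both actions are essentially free, while weak mixing of the Bernoulli factors combined with ergodicity of $M\c\hat M$ gives that both are ergodic.

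The final, decisive step is an explicit untwisting. Define $\Phi\colon\Omega\to\Omega$ by $\Phi(v,\mu,z_0)=(\hat\alpha(\mu)^{-1}v,\mu,z_0)$; since each $\hat\alpha(\mu)^{-1}$ merely permutes the Bernoulli coordinates of $V$, the map $\Phi$ is an automorphism of the measure space $\Omega$. A short computation, using that $\hat\alpha$ is a homomorphism and the compatibility relation above, shows that $\Phi$ conjugates the $G$-action to the action in which each $p\in P$ acts exactly as in the $H$-action, while $k\in K$ acts by $(v,\mu,z_0)\mapsto(\hat\alpha(\mu)^{-1}(k)\cdot v,\mu,z_0)$. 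For each fixed $(\mu,z_0)$ this $\mu$-twisted shift has the same $V$-orbits as the plain shift, because $k\mapsto\hat\alpha(\mu)^{-1}(k)$ is a bijection of $K$; hence $\Phi$ carries $\calr(G\c\Omega)$ onto the equivalence relation generated by the plain $K$-shift and the $P$-action of the $H$-action, which is exactly $\calr(H\c\Omega)$. Therefore $\calr(G\c\Omega)\cong\calr(H\c\Omega)$, the two actions are orbit equivalent, and $G$ and $H$ are measure equivalent. The part I expect to demand the most care is the first step, namely extracting the continuous extension $\hat\alpha$ from the centraliser hypothesis and, hand in hand with it, choosing Bernoulli models for which the untwisting automorphism $\Phi$ is available; once these are in place, the remaining verifications — well-definedness of the actions, essential freeness, ergodicity, and the conjugation computation for $\Phi$ — are routine.
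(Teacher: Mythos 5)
Your proof is correct and is in essence the paper's own argument: both hinge on adjoining the profinite limit $\varprojlim M/M_n$ as a factor and exploiting that the hypothesis $K=\bigcup_n C_K(M_n)$ makes the conjugation $m\mapsto mkm^{-1}$ locally constant in $m$, so that the semidirect product can be untwisted fibrewise over $\hat M$ without changing orbits --- the paper defines the resulting commuting $K$-action directly on a product of arbitrary free actions of $K\rtimes M$ and $L\rtimes M$, whereas you realise the same untwisting as conjugation by the explicit skew automorphism $\Phi$ of a Bernoulli model on which $\aut(K)$ acts spatially. The only point to tidy is that the $\{0,1\}$-base Bernoulli factors are essentially free (and weakly mixing) only when the acting groups are infinite, so for finite $K$ or $L\rtimes M$ you should take a non-atomic base or, as the paper does, arbitrary free p.m.p.\ actions.
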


\begin{proof}
Pick free and p.m.p.\ actions $K\rtimes M\c (X, \mu)$ and $L\rtimes M\c (Y, \nu)$ of the subgroups (or quotients) of $G$.
We define $Z$ as the projective limit $\varprojlim M/M_n$ and define $\xi$ as the normalized Haar measure on the compact group $Z$.
We have the p.m.p.\ action of $M$ on $(Z, \xi)$ defined by left multiplication.
We define an action of $G$ on the direct product $(W, \omega)=(X, \mu)\times (Y, \nu)\times (Z, \xi)$ by the formula
\[((k, l), m)(x, y, z)=((k, m)x, (l, m)y, mz)\]
for $k\in K$, $l\in L$, $m\in M$, $x\in X$, $y\in Y$ and $z\in Z$.
This action of $G$ is free and p.m.p.

For each $n\in \N$, let $K_n$ denote the centralizer of $M_n$ in $K$ as subgroups of $G$.
Let $\pi_n\colon Z\to M/M_n$ be the canonical projection. 
We define an action of $K_n$ on $(W, \omega)$, denoted by $\alpha_n$, by the formula $\alpha_n(k)w=mkm^{-1}w$ for $k\in K_n$, $m\in M$ and $w\in X\times Y\times \pi_n^{-1}(mM_n)$.
This action is well-defined and commutes with the action of $M$ on $(W, \omega)$.
For any $n\in \N$ and any $k\in K_n$, we have $\alpha_{n+1}(k)=\alpha_n(k)$.

By assumption, $K$ is equal to the union $\bigcup_n K_n$.
We can thus define an action of $K$ on $(W, \omega)$, denoted by $\alpha$, by $\alpha(k)w=\alpha_n(k)w$ for $k\in K_n$ and $w\in W$.
This action $\alpha$ of $K$ commutes with the action of the subgroup $L\rtimes M$ on $(W, \omega)$.
We therefore obtain an action of $H$ on $(W, \omega)$.
This action is free and p.m.p., and gives rise to the same equivalence relation as that for the action of $G$ on $(W, \omega)$.
It follows that $G$ and $H$ are ME.
\end{proof}

\begin{proof}[Proof of Theorem \ref{thm-s-int}]
Applying an argument of the same kind as in the proof of Theorem \ref{thm-stable}, we can reduce the proof of Theorem \ref{thm-s-int} to that in the case where $p$ and $q$ are integers with $2\leq p<q$.
Let $p$ and $q$ be such integers.
Let $S$ be a set of prime numbers dividing neither $p$ nor $q$.
We may assume that $S$ is non-empty.
We set $F=\Z_S$ and define $\Lambda$ as the HNN extension of $F$ relative to the isomorphism $\alpha$ from $pF$ onto $qF$ multiplying by $q/p$.
Let $t$ denote the element of $\Lambda$ implementing $\alpha$ with the relation $tbt^{-1}=\alpha(b)$ for any $b\in pF$. 
We will construct a locally compact and second countable group $G$ containing a lattice isomorphic to $\Lambda$.

Let $T$ denote the Bass-Serre tree associated with the decomposition of $\Lambda$ into the HNN extension.
The set of vertices of $T$ is $\Lambda /F$, and the set of edges of $T$ is $\Lambda /(qF)$.
For $\lambda \in \Lambda$, the edge corresponding to $\lambda (qF)\in \Lambda /(qF)$ joins the vertices corresponding to $\lambda F, \lambda tF\in \Lambda /F$.
We introduce an orientation of $T$ so that for each $\lambda \in \Lambda$, the vertex corresponding to $\lambda F$ is the origin of the edge corresponding to $\lambda (qF)$.
The group $\Lambda$ acts on $T$ by left multiplication, as simplicial automorphisms preserving this orientation.
Let $\aut(T)$ denote the group of orientation-preserving simplicial automorphisms of $T$, which is locally compact and second countable.
We have the homomorphism $\imath \colon \Lambda \to \aut(T)$ associated with the action of $\Lambda$ on $T$.
We also have the continuous homomorphism $\pi \colon \aut(T)\to \Z$ with $\pi(\imath(t))=1$ and $\pi(\varphi)=0$ for any element $\varphi$ of $\aut(T)$ fixing a vertex of $T$.

For a prime number $r$, let $Q_r$ denote the quotient group $\mathbb{Q}_r/\Z_r$, which is regarded as a discrete group.
We define $Q$ as the direct sum $\oplus_{r\in S}Q_r$.
For each $r\in S$, since neither $p$ nor $q$ is divisible by $r$, the multiplication by $q/p$ induces an automorphism of $Q_r$.  
We define an action of $\aut(T)$ on $Q\times \R$ by automorphisms, by the formula
\[\varphi((x_r)_{r\in S}, y)=(((q/p)^{\pi(\varphi)}x_r)_{r\in S}, (q/p)^{\pi(\varphi)}y)\]
for $\varphi \in \aut(T)$, $(x_r)_{r\in S}\in Q$ and $y\in \R$.
Let $G$ denote the associated semi-direct product $(Q\times \R )\rtimes \aut(T)$.

We define a homomorphism $\tau \colon \Lambda \to G$ by
\[\tau(b)=((([b]_r)_{r\in S}, b), \imath(b)),\quad \tau(t)=((0, 0), \imath(t))\]
for any $b\in F$, where $[b]_r$ denotes the equivalence class of $b$ in $Q_r$ for $r\in S$.
The homomorphism $\tau$ is injective.
The image $\tau(\Lambda)$ is a lattice in $G$ with $\{ 0\} \times [0, 1)\times K_0$ a fundamental domain, where $K_0$ is the stabilizer in $\aut(T)$ of the vertex of $T$ whose stabilizer in $\Lambda$ is equal to $F$.

Let $E$ be the infinite cyclic subgroup of $F$ generated by the multiplicative unit of the ring $F$.
We define $\Gamma$ as the subgroup of $\Lambda$ generated by $E$ and $t$.
The group $\Gamma$ is the HNN extension generated by $E$ and $t$ and whose relations are $tbt^{-1}=\alpha(b)$ for any $b\in pE$.
It follows that $\Gamma$ is isomorphic to $\bs(p, q)$.
The image $\tau(\Gamma)$ is a lattice in the subgroup $(\{ 0\} \times \R )\rtimes \aut(T)$ of $G$, with $\{ 0\} \times [0, 1)\times K_0$ a fundamental domain.
Let $\Delta$ be the subgroup of $G$ generated by $Q$ and $\tau(\Gamma)$.
The group $\Delta$ is written as the semi-direct product
\[(Q\times \imath(\ker (\pi \circ \imath)\cap \Gamma))\rtimes \langle \imath(t)\rangle\]
and is a lattice in $G$ with $\{ 0\} \times [0, 1)\times K_0$ a fundamental domain.
It turns out that $\Lambda$ and $\Delta$ are ME.
Since $Q$ is an increasing union of finite subgroups invariant under the action of $\imath(t)$, by applying Lemma \ref{lem-sdp}, we see that $\Delta$ and $Q\times \Gamma$ are ME.
The groups $\Lambda$ and $Q\times \Gamma$ are therefore ME.
By Theorem \ref{thm-stable}, $\Lambda$ and $\Gamma$ are ME.
\end{proof}

We end this section with another application of Lemma \ref{lem-sdp} and a comment on Monod-Shalom's class $\mathscr{C}$.
Let $p$ and $q$ be coprime integers with $1\leq p<q$.
Let $r$ be a positive integer with $rp\geq 2$.
Define the group $G=G(p, q)$ as in the beginning of Section \ref{sec-sol}.
Set
\[\Gamma =\bs(rp, rq)=\langle\, a, t\mid ta^{rp}t^{-1}=a^{rq}\,\rangle.\]
We define the homomorphism $\epsilon \colon \Gamma \to G$ and the subgroups $H$, $N$ of $\Gamma$ as in the beginning of Section \ref{sec-s}.
We set $E=\langle a\rangle$ and define $L$ as the subgroup $\epsilon^{-1}(\epsilon(E))$ of $H$, which is the semi-direct product $N\rtimes \langle a\rangle$.

\begin{prop}\label{prop-l}
Let $F_{\infty}$ denote the free group of countably infinite rank.
Then $L$ and $F_{\infty}\times \Z$ are ME.
\end{prop}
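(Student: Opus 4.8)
The plan is to recognize $L$ as a semi-direct product to which Lemma \ref{lem-sdp} applies, and then to identify $N$ with a free group of countably infinite rank. Recall that $L = N \rtimes \langle a \rangle$, that $N = \ker \epsilon$, and that $\epsilon \colon \Gamma \to G$ kills $a^{rp}/a^{rq}$-type relations; concretely $N$ is the kernel of the map $\bs(rp,rq) \to \Z[1/p,1/q] \rtimes \langle t \rangle$. First I would analyze $N$ via the action of $\Gamma$ on its Bass-Serre tree: since $N$ acts on the tree without fixing a vertex or an end only if it contains hyperbolic elements, but in fact $N$ lies in $H$ (as $\epsilon(t)$ has infinite order), and $H$ acts on the Bass-Serre tree of $\bs(rp,rq)$; the subgroup $N$ acts on this tree, and because $N$ maps trivially into $\Z[1/p,1/q]$, a careful look shows $N$ acts freely on the tree, hence $N$ is free. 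Counting (e.g. via the Euler characteristic / the quotient graph, or just exhibiting that $N$ has infinite rank because $[\Gamma : \langle a, t^k \rangle]$-type subgroups force infinitely many orbits of edges) gives that $N \cong F_\infty$.

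Next I would set up Lemma \ref{lem-sdp} with the data $K = N \cong F_\infty$ trivially? No: the lemma needs a $(K \times L) \rtimes M$ structure. Here instead I would take, in the notation of Lemma \ref{lem-sdp}, $M = \langle a \rangle \cong \Z$, and I would exhibit inside $N$ a copy of $\Z$ that is centralized by a finite-index subgroup of $\langle a \rangle$ — but $\langle a \rangle$ has no proper finite-index... wait, $\langle a\rangle \cong \Z$ does have finite-index subgroups. So the strategy is: decompose $N$ itself. Since $N$ is normalized by $a$, conjugation by $a$ gives an automorphism of $N = F_\infty$; I would choose a free generating set $\{ b_i \}$ of $N$ permuted (up to the obvious shift) by $a$, so that $N \rtimes \langle a \rangle$ becomes visibly $\left( \bigoplus_{\Z} \langle b \rangle \right) \rtimes \Z$-like, i.e. a wreath-product-type object, except $N$ is free not abelian. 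The cleaner route: apply Lemma \ref{lem-sdp} with $M = \langle a \rangle$, $K = \langle b_0 \rangle \cong \Z$ a single generator of $N$ whose $\langle a^n \rangle$-conjugates, for suitable $n$, commute with it (they do, being distinct free generators), and $L_{\text{lem}}$ the subgroup of $N$ generated by all the other generators — but one must check the semi-direct structure $m (K \times \{e\}) m^{-1} = K \times \{e\}$ fails since $a b_0 a^{-1} = b_1 \ne b_0$. So the correct formulation is $M = \langle a^N \rangle$ for large $N$ relative to a fixed generator, ensuring $a^N$ centralizes $b_0$; then $M = M_0 > M_1 > \cdots$ can be taken as $\langle a^{N}\rangle > \langle a^{2N}\rangle > \cdots$, and $K = \langle b_0 \rangle$ is centralized by... hmm, this still needs $K$ to be $\langle a^N\rangle$-invariant.

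Given these complications, the honest plan is: directly build the ME coupling by hand rather than forcing Lemma \ref{lem-sdp} verbatim, OR observe that $L = N \rtimes \langle a\rangle$ with $N \cong F_\infty$ and the $a$-action shifting a free basis $\{b_i\}_{i \in \Z}$; then $L$ is an HNN-type / mapping-torus group of $F_\infty$, and one shows $L$ is virtually (indeed, contains with finite... no, $L$ itself) ME to $F_\infty \times \Z$ by noting $L$ acts freely on $(\text{tree of } F_\infty) \times \R$ with $\Z = \langle a \rangle$ acting cocompactly on $\R$, exhibiting a lattice embedding of $L$ in $(\aut(\text{tree})) \times (\R \rtimes \Z)$ whose ambient group also contains $F_\infty \times \Z$ as a lattice — this is exactly the mechanism used in the proof of Theorem \ref{thm-s-int}. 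The main obstacle I anticipate is the first step: rigorously identifying $N = \ker \epsilon$ as a free group of \emph{countably infinite} rank and pinning down how $a$ acts on a free basis, since $\epsilon$ is defined abstractly via a presentation; I would handle this by working with the Bass-Serre tree of $\bs(rp,rq)$, showing $N$ acts freely (using $rp \geq 2$, which is exactly the hypothesis that makes $N$ infinite, as already noted in the proof of Lemma \ref{lem-co-ind}(iii)), and computing the rank from the infinite quotient graph $N \backslash T$. Once $N \cong F_\infty$ and the $\Z$-action are understood, invoking Lemma \ref{lem-sdp} (with the increasing union of finite... no — with the centralizer condition met because distinct free generators commute) or re-running the tree-times-line coupling argument finishes the proof.
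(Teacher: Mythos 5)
There is a genuine gap: you never identify the one structural fact on which the proof rests, namely that every element of $N$ commutes with $a^{r(pq)^k}$ for all sufficiently large $k$, i.e.\ that $N=\bigcup_{k}Z_k$, where $Z_k$ denotes the centralizer of $a^{r(pq)^k}$ in $N$. (This follows from Lemma \ref{lem-co-ind}(i): for $g\in N$ one has $ga^{rp^Kq^L}g^{-1}=a^{rp^{K'}q^{L'}}$ for suitable exponents, and applying $\epsilon$ forces $p^Kq^L=p^{K'}q^{L'}$, so $g$ centralizes $a^{rp^Kq^L}$ and hence its power $a^{r(pq)^{K+L}}$.) With this in hand, Lemma \ref{lem-sdp} applies \emph{verbatim} with $K=N$, $L=\{e\}$ and $M=\langle a\rangle$, taking $M_k=\langle a^{r(pq)^k}\rangle$ as the finite-index normal subgroups: the conjugation-invariance conditions are vacuous, the hypothesis is exactly $N=\bigcup_kZ_k$, and the conclusion reads $N\rtimes\langle a\rangle \sim_{\mathrm{ME}} N\times\Z$. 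The same strictly increasing chain $Z_0\subset Z_1\subset\cdots$ also shows that the free group $N$ is not finitely generated, hence $N\cong F_{\infty}$; you assert $N\cong F_\infty$ but do not actually supply an argument for infinite rank.

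Both of your attempted routes founder on this missing fact. The attempt to apply Lemma \ref{lem-sdp} with $K=\langle b_0\rangle$ a single free generator cannot be repaired, and your parenthetical claim that the conjugates $a^nb_0a^{-n}$ commute with $b_0$ ``being distinct free generators'' is false: in a free group two elements commute only if they lie in a common cyclic subgroup, so distinct basis elements never commute. The fix is to enlarge $K$ to all of $N$, not to shrink $M$. Your fallback picture, in which $a$ shifts a free basis of $N$ so that $L$ is a mapping torus of $F_\infty$ to be coupled with $F_\infty\times\Z$ by a tree-times-line construction as in Theorem \ref{thm-s-int}, is both unproved and incompatible with the centralizer property above (a basis-shifting automorphism has trivial fixed subgroup for every nonzero power, whereas here the fixed subgroups of $\mathrm{Ad}(a^{r(pq)^k})$ exhaust $N$). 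Moreover the general principle it implicitly invokes is false: a free-by-cyclic group need not be ME to $(\text{free})\times\Z$. For an atoroidal automorphism of a finite-rank free group the mapping torus is non-elementary word-hyperbolic, hence belongs to Monod--Shalom's class $\mathscr{C}$, while a group of the form $F\times\Z$ does not (Remark \ref{rem-ms}), and membership in $\mathscr{C}$ is an ME-invariant. So the proposition genuinely depends on the special dynamics of conjugation by $a$ on $N$, and that is precisely the step your proposal omits.
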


\begin{proof}
The group $N$ is free because it acts on the Bass-Serre tree associated with $\Gamma$ freely.
For $k\in \N$, we denote by $Z_k$ the centralizer of $a^{r(pq)^k}$ in $N$.
We then have the equality $N=\bigcup_{k\in \N}Z_k$.
Since this union is strictly increasing, the group $N$ is not finitely generated.
It turns out that $N$ is isomorphic to $F_{\infty}$.
The proposition follows from Lemma \ref{lem-sdp}.
\end{proof}

\begin{rem}\label{rem-ms}
Monod-Shalom \cite{ms} introduced Class $\mathscr{C}$ consisting of all discrete groups $A$ such that for some mixing unitary representation $\pi$ of $A$ on a Hilbert space, the second bounded cohomology group of $A$ with coefficient $\pi$ is non-zero.
They proved the following three assertions:
\begin{itemize}
\item For any discrete group $A$ in $\mathscr{C}$, any infinite normal subgroup of $A$ belongs to $\mathscr{C}$.
\item Whether a discrete group belongs to $\mathscr{C}$ or not is invariant under ME.
\item No amenable group belongs to $\mathscr{C}$.
\end{itemize}
We refer to \cite[Proposition 7.4]{ms}, \cite[Corollary 7.6]{ms} and \cite[Proposition 7.10 (i)]{ms} for these assertions, respectively.
Combining these results with Proposition \ref{prop-l}, we see that none of $L$, $H$ and $\Gamma$ belongs to $\mathscr{C}$.
Although the above Monod-Shalom's results on $\mathscr{C}$ and Theorem \ref{thm-stable} also imply that $\Gamma$ does not belong to $\mathscr{C}$, the proof of this fact through Proposition \ref{prop-l} is much simpler.
\end{rem}


\section{Stable actions of Vaes groups}\label{sec-v}

The following construction of the group $G$ is a slight generalization of the original one due to Vaes \cite{vaes} (see Remark \ref{rem-v} for the group discussed in \cite{vaes}).

\medskip

\noindent {\bf Construction of a group.}
We follow the notation in \cite{vaes}.
For $n\in \N$, let $H_n$ be a non-trivial finite group, and let $E_n$ be a discrete group.
Let $\Lambda$ be a discrete group acting on $H_n$ by automorphisms for each $n\in \N$.
We denote this action as $\lambda \cdot h$ for $\lambda \in \Lambda$ and $h\in H_n$, using a dot.
Set $K=\oplus_{n\in \N}H_n$.
Let $\Lambda$ act on $K$ diagonally, that is, for $\lambda \in \Lambda$ and $h=(h_n)_{n\in \N}\in K$, we have $\lambda \cdot h=(\lambda \cdot h_n)_{n\in \N}$.
For each $N\in \N$, let $K_N$ be the subgroup of $K$ defined by $K_N=\oplus_{n=N}^{\infty}H_n$.
We set $G_0=K\rtimes \Lambda$ and inductively define a group $G_{N+1}$ as the amalgamated free product
\[G_{N+1}=G_N\ast_{K_N}(K_N\times E_N),\]
where $K_N$ is regarded as a subgroup of $G_N$ through the inclusion $K_N<K<G_0<G_N$, and $G_N$ is naturally a subgroup of $G_{N+1}$.
Let $G$ denote the inductive limit of the increasing sequence of groups, $G_0<G_1<G_2<\cdots$.

For any $N\in \N$, we have the homomorphism $\epsilon_N\colon G_{N+1}\to G_N$ that is the identity on $G_N$ and sends $E_N$ to the neutral element.
We thus obtain the homomorphism $\epsilon \colon G\to G_0$ that is equal to $\epsilon_0\circ \epsilon_1\circ \cdots \circ \epsilon_N$ on $G_{N+1}$ for any $N\in \N$.
Let $\delta \colon G\to \Lambda$ denote the composition of $\epsilon$ and the quotient map from $G_0$ onto $\Lambda$.

\medskip

\noindent {\bf Construction of an action.}
The construction is similar to that in Section \ref{sec-s}.
For each $n\in \N$, let $\Lambda_n$ denote the subgroup of $\Lambda$ consisting of all elements acting on $H_i$ trivially for any $i\in \N$ with $i\leq n$.
We define $X$ as the projective limit $\varprojlim \Lambda /\Lambda_n$ and define $\mu$ as the normalized Haar measure on the compact group $X$.
We have the action $\Lambda \c (X, \mu)$ defined by left multiplication.
Let $G$ act on $(X, \mu)$ through the homomorphism $\delta \colon G\to \Lambda$.

We set $Z_0=\prod_{n\in \N}H_n$ and denote by $\xi_0$ the normalized Haar measure on the compact group $Z_0$.
Let $K$ act on $Z_0$ by left multiplication.
We set
\[(Z, \xi)=\prod_{G/K}(Z_0, \xi_0)\]
and define a p.m.p.\ action of $G$ on $(Z, \xi)$ as the action co-induced from the action of $K$ on $(Z_0, \xi_0)$.
Since the action $K\c (Z_0, \xi_0)$ is essentially free, so is the action $G\c (Z, \xi)$.
If $\ker \epsilon$ is infinite, then the action $\ker \epsilon \c (Z, \xi)$ is ergodic because $\ker \epsilon$ acts on $G/K$ freely.

We set $(Y, \nu)=(Z_0, \xi_0)$.
The group $\Lambda$ acts on the group $Y$ by automorphisms, and $K$ acts on $Y$ by left multiplication.
We then obtain the action $G_0\c (Y, \nu)$. 
Let $G$ act on $(Y, \nu)$ through the homomorphism $\epsilon \colon G\to G_0$.

We set
\[(W, \omega)=(X, \mu)\times (Y, \nu)\times (Z, \xi)\]
and define a p.m.p.\ action $G\c (W, \omega)$ as the diagonal action so that for $g\in G$ and $w=(x, y, z)\in W$, we have $gw=(gx, gy, gz)$.
The action $G\c (W, \omega)$ is essentially free, and is ergodic if $\ker \epsilon$ is infinite.

\begin{thm}\label{thm-s-v}
In the above notation, we assume that $\ker \epsilon$ is infinite.
Then the action $G\c (W, \omega)$ is stable.
\end{thm}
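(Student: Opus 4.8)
The plan is to follow the strategy of the proof of Theorem~\ref{thm-p-2}: I would produce a non-trivial asymptotically central sequence $\{U_j\}_{j\in\N}$ in $[\calr(G\c W)]$ and then conclude by Jones--Schmidt's criterion (Theorem~\ref{thm-js}), the action $G\c(W,\omega)$ being ergodic because $\ker\epsilon$ is infinite. The element $U_j$ is meant to be the analogue of the conjugates $t^{\bullet}a^{r(pq)^{\bullet}}t^{-\bullet}$ used for $\bs(rp,rq)$: a member of $K$ that is carried along the action of $\Lambda$ on the coset space occurring in the definition of $X$. Concretely, for each $j\in\N$ I would fix a non-trivial $h_j\in H_j$ of prime order and a section $\Lambda/\Lambda_j\to\Lambda$, $\bar x\mapsto\lambda_{\bar x}$, and define $U_j$ to act, on the clopen subset of $W$ lying over a coset $\bar x\in\Lambda/\Lambda_j$, by the group element $\lambda_{\bar x}\cdot h_j\in H_j\subset K\subset G$. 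This is independent of the section because $\Lambda_j$ acts trivially on $H_j$, and it lies in $[\calr(G\c W)]$ and preserves $\omega$ because $K\subset\ker\delta$ acts trivially on $X$, so each such subset is $U_j$-invariant and $U_j$ is piecewise a group element.

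Next I would check that $\{U_j\}$ is asymptotically central. For the commutation condition it is enough to consider $g$ in a generating set of $G$. If $g\in\Lambda$ then $g$ permutes the pieces of $W$ compatibly with its left action on $\Lambda/\Lambda_j$, and since $g(\lambda_{\bar x}\cdot h_j)g^{-1}=\lambda_{g\bar x}\cdot h_j$ in $G$, a direct computation gives $U_jg=gU_j$ on all of $W$. If $g\in K$, then for $j$ exceeding the support of $g$ the components of $g$ lie in groups $H_n$ with $n<j$, hence commute with $H_j$, and since $g$ acts trivially on $X$ we get $U_jg=gU_j$ for all such $j$; similarly, if $g\in E_N$ then $H_j\subset K_N$ and $K_N$ commutes with $E_N$ inside $G_{N+1}$, so $U_jg=gU_j$ for $j\ge N$. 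For the condition $\lim_j\omega(U_jB\bigtriangleup B)=0$ I would use that $U_j$ is trivial on $X$, acts on $(Y,\nu)$ by left translation in the single coordinate $j$ of $\prod_nH_n$, and acts on $(Z,\xi)$ through elements of $H_j\subset K$: approximating $B$ by a cylinder, the $X$- and $Y$-parts are immediate, and the $Z$-part is the analogue of Lemma~\ref{lem-co-ind}(ii), which should follow by showing that for $j$ large relative to a fixed finite subset $F\subset G/K$ the conjugates $s(\alpha)^{-1}H_js(\alpha)$ with $\alpha\in F$ still translate only coordinates of $Z_0$ of index $\ge j$.

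For non-triviality I would introduce a ``twisted reading'' map $\pi\colon X\times Y\to S:=\prod_{n\in\N}H_n$ defined by $\pi(x,y)_j=\lambda_{\bar x}^{-1}\cdot y_j$, where $\bar x$ is the image of $x$ in $\Lambda/\Lambda_j$; it is section-independent and pushes $\mu\times\nu$ to the product of the uniform probability measures $\nu_n$ on the $H_n$. The point is that for every $g\in G$ the sequences $\pi(g(x,y))$ and $\pi(x,y)$ agree in all but finitely many coordinates: for $g\in K$ this is clear; for $g\in E_N$ one even has $\pi(g(x,y))=\pi(x,y)$, since $E_N$ acts trivially on $X$ and $Y$; and for $g\in\Lambda$ again $\pi(g(x,y))=\pi(x,y)$, because the left translation of $y$ is exactly undone by the twist. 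Consequently $\pi$ (pre-composed with the projection $W\to X\times Y$) maps $G$-orbits into orbits of the tail relation $\calr_0$ on $S$, so Lemma~\ref{lem-ai} shows that $B_j:=\pi^{-1}(\{c\in S:c_j\in P_j\})\times Z$ is an a.i.\ sequence for $G\c(W,\omega)$ for any choice of $P_j\subset H_j$, using that the coordinate cylinders are an a.i.\ sequence for $\calr_0$. Choosing $P_j$ to be a suitable half of each $\langle h_j\rangle$-orbit in $H_j$ (possible because $h_j$ has prime order) makes $\nu_j(h_jP_j\bigtriangleup P_j)\ge 1/2$, and since in the twisted coordinates $U_j$ acts by left multiplication by $h_j$ in coordinate $j$, we obtain $\omega(U_jB_j\bigtriangleup B_j)=\nu_j(h_jP_j\bigtriangleup P_j)\ge 1/2$ for every $j$. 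Theorem~\ref{thm-js} then gives stability.

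I expect the main obstacle to be the analogue of Lemma~\ref{lem-co-ind}(ii) for the co-induced action $G\c(Z,\xi)$: one must control the conjugates $s(\alpha)^{-1}H_js(\alpha)$ inside the iterated amalgamated free product $G$ and verify that, for $j$ large relative to a fixed finite family of section values, they act trivially on a prescribed finite-dimensional cylinder of $Z$. This rests on the normal form in $G_N=G_{N-1}\ast_{K_{N-1}}(K_{N-1}\times E_{N-1})$ together with the relations $[H_j,E_n]=1$ for $j\ge n$. A minor additional care is to keep $\omega(U_jB_j\bigtriangleup B_j)$ bounded away from $0$ when $|H_j|$ is unbounded, which is why $h_j$ is taken of prime order and $P_j$ is a half-orbit set rather than a transversal of $\langle h_j\rangle$.
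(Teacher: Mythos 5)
Your proposal is correct and follows essentially the same route as the paper: the same twisted reading map $\pi(w)_j=\lambda_{\bar x}^{-1}\cdot y_j$, the same piecewise-conjugated elements $U_j=\lambda h_j\lambda^{-1}$ over the fibers of $X\to\Lambda/\Lambda_j$, and the same verification via Lemma \ref{lem-ai} and Theorem \ref{thm-js}, including the conjugation control $s(\alpha)^{-1}H_j s(\alpha)\subset H_j$ for large $j$ that you flag as the main obstacle (the paper disposes of it in one line using $g\in G_N\Rightarrow g^{-1}H_ng=H_n$ for $n\ge N-1$). The only cosmetic difference is in guaranteeing non-triviality: the paper takes an arbitrary non-neutral $h_n$ and greedily builds $I_n\subset H_n$ with $h_nI_n\cap I_n=\emptyset$ and $|I_n|\ge|H_n|/3$, whereas you take $h_j$ of prime order and alternating half-orbits; both yield a uniform lower bound on $\omega(U_jB_j\bigtriangleup B_j)$.
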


\begin{proof}
For each $n\in \N$, let $\tau_n\colon X\to \Lambda /\Lambda_n$ be the canonical projection.
We define a Borel map $\pi \colon W\to Y$ as follows.
Pick $w=(x, y, z)\in W$ and $n\in \N$.
Choosing $\lambda \in \Lambda$ with $x\in \tau_n^{-1}(\lambda \Lambda_n)$, we define the $n$th coordinate of $\pi(w)$, denoted by $\pi(w)_n$, as $\pi(w)_n=\lambda^{-1}\cdot y_n$, where $y_n$ denotes the $n$th coordinate of $y$.
This is well-defined because $\Lambda_n$ acts on $H_n$ trivially.
We can check the following:
\begin{enumerate}
\item The equality $\pi_*\omega =\nu$ holds.
\item The map $\pi \colon W\to Y$ is $\Lambda$-invariant and $(\ker \epsilon)$-invariant.
\item For any $w\in W$, we have $\pi(Kw)=K\pi(w)$.
\end{enumerate}

We now construct a non-trivial a.c.\ sequence for the action $G\c (W, \omega)$.
For each $n\in \N$, fix a non-neutral element $h_n$ of $H_n$.
We choose a subset $I_n$ of $H_n$ with $h_nI_n\cap I_n=\emptyset$ and $|I_n|\geq |H_n|/3$.
Such an $I_n$ is obtained as follows.
Let $M$ be the least integer with $M\geq |H_n|/3$.
We define elements of $H_n$, $l_1,\ldots, l_M$, inductively.
Pick an arbitrary element $l_1$ of $H_n$.
Let $m$ be a positive integer with $m<M$.
If $l_1,\ldots, l_m$ are defined, then we set $L_m=\{ l_1,\ldots, l_m\}$.
The set
\[H_n\setminus (h_n^{-1}L_m\cup L_m\cup h_nL_m)\]
is non-empty because $m<|H_n|/3$.
Let $l_{m+1}$ be an element of this set.
We defined $l_1,\ldots, l_M\in H_n$, and set $I_n=\{ l_1,\ldots, l_M\}$.
This is a desired set.

We set
\[C_n=\{\, (y_m)_{m\in \N}\in Y\mid y_n\in I_n\,\} \in \calb_Y,\quad B_n=\pi^{-1}(C_n)\in \calb_W.\]
Define an element $U_n$ of $[\calr(G\c W)]$ by $U_n=\lambda h_n\lambda^{-1}$ on $\tau_n^{-1}(\lambda \Lambda_n)\times Y\times Z$ with $\lambda \in \Lambda$.
This is well-defined because $\Lambda_n$ acts on $H_n$ trivially.
We check the following three conditions:
\begin{enumerate}
\item[(1)] For any $A\in \calb_W$, we have $\lim_n\omega(U_nA\bigtriangleup A)=0$.
\item[(2)] For any $g\in G$, we have $\lim_n\omega(\{\, w\in W\mid U_ngw\neq gU_nw\,\})=0$.
\item[(3)] The sequence $\{ B_n\}_{n\in \N}$ in $\calb_W$ is an a.i.\ sequence for the action $G\c (W, \omega)$, and we have $\omega(U_nB_n\bigtriangleup B_n)\geq 2/3$ for any $n\in \N$.
\end{enumerate}
Let $\{ \gamma_n\}_{n\in \N}$ be a sequence with $\gamma_n\in H_n$ for any $n\in \N$.
Pick $g\in G$ and choose $N\in \N$ with $g\in G_N$.
For any $n\in \N$ with $n\geq N-1$, we have $g^{-1}\gamma_ng\in H_n$.
We therefore have $\lim_n\xi(\gamma_nD\bigtriangleup D)=0$ for any $D\in \calb_Z$.
By the definition of the action of $K$ on $Y$, we have $\lim_n\nu(\gamma_nC\bigtriangleup C)=0$ for any $C\in \calb_Y$.
These convergences are uniform with respect to the sequence $\{ \gamma_n\}_{n\in \N}$.
The action of $K$ on $X$ is trivial.
Condition (1) follows.

For any $n\in \N$, by definition, $U_n$ commutes with any element of $\Lambda$.
For any $m\in \N$, if $n\in \N$ is bigger than $m$, then $U_n$ commutes with any element of $H_m$ and $E_m$.
Since $G$ is generated by $\Lambda$, $H_m$ and $E_m$ for all $m\in \N$, condition (2) follows.

The sequence $\{ C_n\}_{n\in \N}$ in $\calb_Y$ is an a.i.\ sequence for the action $K\c (Y, \nu)$.
Conditions (i)--(iii) and Lemma \ref{lem-ai} imply that $\{ B_n\}_{n\in \N}$ is an a.i.\ sequence for the action $G\c (W, \omega)$.
For any $n\in \N$, we have $\omega(U_nB_n\bigtriangleup B_n)=|h_nI_n\bigtriangleup I_n|/|H_n|\geq 2/3$.
Condition (3) is proved.

Conditions (1)--(3) show that $\{ U_n\}_{n\in \N}$ is a non-trivial a.c.\ sequence for the action $G\c (W, \omega)$.
The theorem follows from Theorem \ref{thm-js}.
\end{proof}

\begin{rem}\label{rem-v}
Vaes \cite{vaes} showed that $G$ is inner amenable, the conjugacy class of any non-neutral element of $G$ in $G$ is infinite, and the von Neumann algebra of $G$ does not have property Gamma, under the following assumption:
We choose an arbitrary sequence of mutually distinct prime numbers, $\{ p_n\}_{n\in \N}$.
For each $n\in \N$, we set $H_n=(\Z/p_n\Z)^3$ and $E_n=\Z$, and set $\Lambda =SL(3, \Z)$.
The group $\Lambda$ naturally acts on $H_n$ by automorphisms.
Theorem \ref{thm-s-v} therefore implies Theorem \ref{thm-v}.
\end{rem}


\end{document}